\numberwithin{equation}{section}
\newtheorem{thm}{Theorem}[section]
\newtheorem{lem}[thm]{Lemma}
\newtheorem{cor}[thm]{Corollary}
\newtheorem{prop}[thm]{Proposition}
\theoremstyle{definition}
\newtheorem{defi}[thm]{Definition}
\newtheorem{nota}[thm]{Notation}
\newtheorem{conj}{Conjecture}
\theoremstyle{remark}
\newtheorem{rmq}[thm]{Remark}
\newtheorem{exam}[thm]{Example}
\def\ra{\longrightarrow}
\def\N{\mathbb N}
\def\Z{\mathbb Z}
\def\F{\mathbb F}
\def\Fp{{\mathbb F}_p}
\def\F2{{\mathbb F}_2}
\def\U{\mathscr U}
\def\K{\mathcal K}
\def\Ker{{\rm Ker}}
\def\Nil{{\mathcal N}il}
\def\calF{{\mathscr F}}
\def\A{{\mathcal A}_p}
\def\bT{{\mathbb T}}
\def\Hom{\mathrm{Hom}}
\def\Ext{{\mathrm{Ext}}}
\def\Tor{{\mathrm{Tor}}}
\def\ker{\mathrm{ker}}
\def\id{\mathrm{Id}}
\def\Sq{\mathrm{Sq}}
\def\map{\mathrm{map}}
\def\mappt{\mathrm{map}_*}
\def\nlp{\mathrm{nil}}
\begin{document}
\title[Non-realization results]{On non-realization results and conjectures
of N. Kuhn}

\author[Nguyen T.C.]{Nguyen The Cuong}
\address{LIA CNRS Formath Vietnam}
\email{nguyentc@math.univ-paris13.fr}

\author[G. Gaudens]{G\'erald Gaudens}
\address{LAREMA, UMR 6093 CNRS and Université d'Angers}
\email{geraldgaudens@gmail.com}

\author[G. Powell]{Geoffrey Powell}
\address{LAREMA, UMR 6093 CNRS and Université d'Angers}
\email{geoffrey.powell@math.cnrs.fr}

\author[L. Schwartz]{Lionel Schwartz}
\address{LAGA, UMR 7539 CNRS and Universit\'e Paris 13.}
\email{schwartz@math.univ-paris13.fr}

\thanks{This work was partially supported by the VIASM and the program ARCUS
Vietnam
of the \'R\'egion Ile de France and Minist\`ere des Affaires Etrang\`eres'. }

\begin{abstract} We discuss two extensions of results conjectured by  
Nick Kuhn about the non-realization of unstable
algebras as the mod-$p$ singular cohomology of a space, for $p$ a prime. The 
first extends and refines earlier work
of the second and fourth authors, using Lannes' mapping space theorem. The
second (for the prime $2$) 
is based on an analysis of the $-1$ and $-2$  columns of the Eilenberg-Moore 
spectral sequence, and of the associated extension.

In both cases, the statements and proofs use the relationship between the
categories of unstable modules and functors between $\Fp$-vector spaces. 
The second result in particular exhibits the power of the functorial approach. 
\end{abstract}

\maketitle
\section{Introduction}

Let $p$ be a prime number, $\U$ denote the category of unstable modules and  
$\K$ the category of unstable algebras over the mod $p$ Steenrod algebra $\A$
\cite{S94}. The mod-$p$ singular cohomology of a space $X$ is denoted $H^* X$.

In the first part of the paper, the topological spaces $X$ considered are 
  $p$-complete and connected. We assume that $H^* X$ is of finite
type (finite dimensional in each degree) and, moreover, that 
Jean Lannes' functor $T_V$ acts nicely on $H^*X$, in the sense that $T_V H^* X$ 
is  of finite type for all $V$. In order to
apply  Lannes' theory \cite{La92} we also suppose the spaces considered are 
$1$-connected and that  $T_VH^*X$ is $1$-connected for all $V$. For the current 
arguments, the connectivity hypothesis is not a significant restriction,
since it is always possible to 
collapse the $1$-skeleton;  the finiteness hypotheses can be relaxed using
methods of Fabien Morel, as 
explained by Fran\c{c}ois-Xavier Dehon and G\'erald Gaudens \cite{DG03}.

In an earlier paper, the second and fourth authors
gave a proof of the following result, Nick Kuhn's realization conjecture
\cite{K95}:

\begin{thm}
\label{fg}
\cite{GS13} Let $X$ be a space such that $H^*X$ is finitely generated as an
$\A$-module, then $H^*X$ is finite.
\end{thm}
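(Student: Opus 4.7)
The plan is to argue by contradiction: suppose $H^*X$ is finitely generated as an $\A$-module but infinite-dimensional, and combine Lannes' mapping space theorem with the structure theory of finitely generated reduced unstable modules to produce a non-realizable object.

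The first step is a reduction to the reduced case. The nilpotent filtration of a finitely generated unstable module is finite, and each subquotient is (a suspension of) a finitely generated reduced module; a short induction up this filtration shows that if the reduced quotient $H^*X/\nlp_1 H^*X$ is finite, then so is $H^*X$. One may therefore assume this reduced quotient is infinite, finitely generated, and reduced.

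Next, I would invoke the Henn--Lannes--Schwartz embedding theorem for finitely generated reduced unstable modules: such a module injects, up to a finite piece, into a finite direct sum of modules of the form $L \otimes H^*(BV_i)$, where the $V_i$ are elementary abelian $p$-groups and $L$ is a finite-dimensional $\GL(V_i)$-module. Choosing $V$ large enough to dominate all the $V_i$, there are then many non-trivial $\A$-algebra maps $H^*X \to H^*(BV)$ detecting the infinite reduced quotient. Lannes' mapping space theorem $T_V H^*X \cong H^*\map(BV,X)$ translates these into geometric components of $\map(BV,X)$, whose cohomologies are themselves finitely generated by the preservation properties of $T_V$ in the finite-type setting.

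The main obstacle, and the technical heart of the argument, is turning this geometric abundance into a genuine contradiction. The idea is to select a component of $\map(BV, X)$ which detects part of the infinite reduced quotient and to iterate the construction, producing a sequence of spaces whose cohomologies remain finitely generated over $\A$ but whose reduced structure is forced into a forbidden configuration. Concretely, one expects to isolate an unstable subquotient that cannot be realized -- typically a Brown--Gitler-type obstruction -- thereby contradicting the realizability of $X$. The delicate part is quantifying how finite generation interacts with iteration of $T_V$, and showing that the non-finiteness of the reduced quotient cannot be absorbed into the finite ``error'' terms appearing in the embedding theorem.
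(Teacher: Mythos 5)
Your proposal diverges from the paper's proof and contains genuine gaps, most seriously in the opening reduction and in the final, decisive step.

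\textbf{The initial reduction is misdirected.} You propose to reduce to the case where the reduced quotient $H^*X/\nlp_1 H^*X$ is infinite, arguing by induction up the nilpotent filtration. This does not work in the stated form: the finite nilpotent filtration of a finitely generated $H^*X$ has layers $\Sigma^s R_s H^*X$, and knowing $R_0 H^*X$ is finite tells you nothing directly about the higher $R_s$. More fundamentally, the substance of Kuhn's conjectures lies precisely in the nilpotent regime, not the reduced one, so ``reducing to the reduced case'' is the wrong move. The paper goes the opposite way. It first observes (via \cite[6.1.4]{S94} and \cite{K95}) that finite generation implies $H^*X \in \U_d$ for some $d$, so it suffices to prove that a realizable module in $\U_d$ lies in $\U_0$ (Theorem \ref{KT}); then finite generation plus $\U_0$ gives finiteness. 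To prove Theorem \ref{KT}, the paper \emph{suspends}: $\tilde H^*(\Sigma X)$ is nilpotent and $w_{\Sigma X}-\id$ is bounded, which puts one in the setting of Theorem \ref{thm:general}, where the nilpotent hypothesis is in force throughout.

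\textbf{The contradiction step is absent.} After invoking the Henn--Lannes--Schwartz structure theory and Lannes' mapping space theorem, you describe the heart of the argument only as an ``idea'' of iterating $T_V$ to reach a ``forbidden configuration,'' with a gesture towards Brown--Gitler-type obstructions. Nothing concrete is proposed, and you yourself flag the hard part (how finite generation survives iteration of $T_V$, and why the infinite reduced quotient cannot hide in the finite error terms) as unresolved. The paper's contradiction is of a completely different flavour: starting from a hypothetical space with $0 \neq w_X \leq \id$ and $\tilde H^*X$ nilpotent, it applies $\Delta^{s+d-1}$ (Kuhn's reduction), passes to $Z = \Omega(\Sigma X)^\wedge_p$ so that the Bott--Samelson theorem gives $H^*Z \cong \mathbb{T}(\tilde H^*X)$, constructs a map $\varphi_s \colon \Sigma^s B\Z/p \to Z$ realizing a nontrivial $\K$-morphism via Lannes' theorem, and then shows by obstruction theory along Milnor's filtration of $K(\Z/p,2) = B(B\Z/p)$ that $\varphi_s$ extends over $\Sigma^{s-1}K(\Z/p,2)$ --- using the connectivity estimate $\mappt(B\Z/p^{\wedge n}, Z)$ is $(n+s-2)$-connected, which comes from $\bar T^n H^*Z$ being $(n+s-2)$-connected. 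This extension is impossible algebraically because $\tilde H^*Z \in \Nil_s$ while $\tilde H^*\Sigma^{s-1}K(\Z/p,2)$ is the $(s-1)$-fold suspension of a reduced module (Proposition \ref{EML}), so $\Hom_\U$ between them vanishes. None of this mechanism --- the suspension/loop trick, Bott--Samelson, the $K(\Z/p,2)$ obstruction calculus, the profile function bookkeeping controlling connectivity --- appears in your outline, and without it there is no contradiction.

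In short: the correct reduction is to $\U_0$ via suspension to the nilpotent case, not to the reduced quotient; and the non-realizability must be produced by a concrete topological obstruction argument, which your proposal leaves entirely unspecified.
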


This is a consequence of the following result, Kuhn's strong
realization conjecture \cite{K95}, which uses the Krull filtration
$$
\U _0 \subset \U _1 \subset \U _2  \subset \ldots \subset \U
$$
 of the category $\U$  (see Section \ref{Krull}); in particular,  $\U_0$ is the
full
subcategory  of locally finite unstable modules.

 \begin{thm} 
 \label{KT} 
\cite{GS13} 
Let  $X$ be a space such that  $H^* X \in\U_n$ for some $n \in \N$, 
then $H^*X \in \U_0$
\end{thm}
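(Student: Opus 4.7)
The plan is to proceed by induction on $n$, with the base case $n=0$ being tautological. Assume the result for $n-1$, and let $X$ satisfy the standing hypotheses with $H^*X \in \U_n$; I will argue that $H^*X \in \U_0$.

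The first step is to invoke a structural feature of the Krull filtration, of the type that the introduction indicates is central: modulo the subcategory $\Nil$ of nilpotent modules, passage through Lannes' functor $T_V$ for a suitably chosen elementary abelian $V$ strictly decreases the Krull degree on the reduced part. More precisely, if $M \in \U_n \setminus \U_{n-1}$ has a nontrivial nil-closed quotient, then there exists $V$ such that $\bar T_V M$ admits a summand lying in $\U_{n-1} \setminus \U_{n-2}$, while the nilpotent part of $M$ is handled by a separate argument exploiting the inductive Krull structure of $\Nil \cap \U_n$. This relies on the equivalence between $\U/\Nil$ and the category $\calF_\omega$ of analytic functors on $\Fp$-vector spaces, and on the identification of the successive layers $\U_k/\U_{k-1}$ in functorial terms.

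Next, I would apply Lannes' mapping space theorem. Under the finite-type and $1$-connectedness hypotheses, $T_V H^* X \cong H^* \map(BV, X)$, and the right-hand side decomposes over connected components indexed by $\Hom_\K(H^*X, H^*BV)$. Each component $Y$ inherits the standing hypotheses: $Y$ is $p$-complete and $1$-connected (since $T_V H^*X$ is), and iterated application $T_W T_V = T_{V \oplus W}$ preserves the relevant finiteness. Combining the previous step with the inductive hypothesis, applied componentwise, gives $H^*Y \in \U_0$ for each such $Y$. Reassembling these conclusions and using injectivity properties of the unit $M \to T_V(M) \otimes H^*BV$ on nil-closed objects then forces $H^*X \in \U_0$.

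The main obstacle is the structural first step: precisely quantifying how $T_V$ shifts the Krull filtration modulo $\Nil$, and, in tandem, controlling the nilpotent part of $H^*X$, which $T_V$ does not detect directly. A secondary difficulty is translating the componentwise conclusion about $\map(BV, X)$ back into a statement about $H^*X$ itself, since $T_V$ has a large kernel and the component decomposition masks part of the original module's structure. These are precisely the points where the functorial dictionary $\U/\Nil \simeq \calF_\omega$ is indispensable, and they form the technical core of the argument.
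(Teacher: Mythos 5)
The approach you sketch is the naive inductive reduction via Lannes' $\bar T$, and it does not close. The issue is precisely at the point you flag as a "secondary difficulty," but it is in fact fatal: if $H^*X \in \U_n$, then by Theorem~\ref{krull} one has $\bar T H^*X \in \U_{n-1}$, and $\bar T H^*X$ is realized (up to the usual caveats) as $\tilde H^*\Delta(X)$. Applying the inductive hypothesis to $\Delta(X)$ yields $\bar T H^*X \in \U_0$. But $\bar T M \in \U_0$ is \emph{equivalent} to $M \in \U_1$, not to $M \in \U_0$. So the induction only pushes the problem down to the case $n=1$ (equivalently, to $w_X - \id$ bounded) and never establishes the base: nothing in your argument rules out $H^*X$ being a non--locally-finite module in $\U_1$. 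Your proposed patch via injectivity of the unit $M \to H^*BV \otimes T_V M$ on nil-closed objects does not help either — $F(1) \cong \tilde H^*B\Z/2$ is reduced, nil-closed, and lies in $\U_1 \setminus \U_0$, so the implication you want is simply false.

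The missing ingredient is genuinely topological and is the heart of the paper's proof of Theorem~\ref{thm:general} (of which Theorem~\ref{KT} is a corollary, after suspending once to land in the nilpotent setting so that $w_{\Sigma X} - \id$ is bounded). After the $\bar T$-reduction normalizes the profile — landing on a space $Z$ (obtained via $\Delta^{s+d-1}$ and $\Omega\Sigma$) whose $R_s$ is a nontrivial reduced module in $\U_1$ — the paper constructs a map $\varphi_s \colon \Sigma^s B\Z/p \to Z$ realizing a nontrivial class, using Lannes' mapping space theorem and a Hurewicz argument. It then runs an obstruction-theory argument along the Milnor filtration of $K(\Z/p,2) = B(B\Z/p)$: the obstructions to extending $\varphi_s$ over $\Sigma^{s-1}K(\Z/p,2)$ live in $\pi_{n+s-2}\mappt(B\Z/p^{\wedge n}, Z)$, which vanishes because the connectivity estimates (Lemma~\ref{lem:profile_T_connectivity}, fed into Bousfield--Kan) show $\mappt(B\Z/p^{\wedge n}, Z)$ is $(n+s-2)$-connected. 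The resulting extension is algebraically impossible, since $\tilde H^*Z \in \Nil_s$ while $\tilde H^*\Sigma^{s-1}K(\Z/p,2)$ is an $(s-1)$-fold suspension of a reduced module (Proposition~\ref{EML}); this is the contradiction. Your sketch contains no analogue of this step, and without it there is no way to eliminate the $\U_1$ case. In short: the functorial dictionary and the $T$-functor reduction are necessary but not sufficient; the proof lives or dies on the realization/obstruction argument, which you have not supplied.
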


The aim of this paper is to extend these results in two directions. The first
 exploits the nilpotent filtration (see Section \ref{Nil})
$$
\U = \Nil_0 \supset \Nil_1 \supset \Nil_2 
\supset 
\ldots \supset
\Nil_s \supset \ldots 
$$
of the category of unstable modules.  Here $\Nil_1$ is the full subcategory of
nilpotent unstable modules. For $p=2$, an unstable module 
is nilpotent if the operator $\Sq_0\colon x \mapsto \Sq^{\vert x \vert}x$  acts
nilpotently on any element (a similar definition 
applies for $p$ odd); in particular, a connected unstable
algebra is nilpotent if and only if its augmentation ideal is a 
nilpotent unstable module. 

 Recall that an unstable module is
reduced if it contains no non-trivial nilpotent submodule. The following result 
explains how unstable modules are built from (suspensions 
of) reduced unstable modules: 

\begin{prop}
 \cite{S94,K95}
 An unstable module  $M$ has a natural, convergent decreasing 
filtration
 $\{ \nlp_s M\}_{s\geq 0}$ with $\nlp_s M \in \Nil_s$ and  $\nlp_s M/\nlp_{s+1}
M \cong \Sigma^s R_s M$,  
 where $R_s M$ is a reduced unstable module.
\end{prop}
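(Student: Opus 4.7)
The plan is to construct the filtration inductively, invoking at each step the functorial equivalence of abelian quotient categories $\Sigma^s : \U/\Nil_1 \xrightarrow{\sim} \Nil_s/\Nil_{s+1}$ established in \cite{S94} to extract the canonical ``suspension of a reduced module'' top quotient. For the base step $s=0$, I would set $\nlp_1 M$ to be the largest nilpotent submodule of $M$ (which exists because $\Nil_1$ is closed under sums, being a Serre subcategory of $\U$ closed under colimits); the quotient $R_0 M := M/\nlp_1 M$ is then reduced by construction, identifying $M/\nlp_1 M$ with $\Sigma^0 R_0 M$.

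For the inductive step, suppose $\nlp_s M \in \Nil_s$ has been constructed. Under the equivalence $\Sigma^s : \U/\Nil_1 \xrightarrow{\sim} \Nil_s/\Nil_{s+1}$, the class of $\nlp_s M$ is represented by a unique reduced unstable module $R_s M$, which for $p = 2$ can be described concretely via the Frobenius functor $\Phi$ as an iterated desuspension of $\nlp_s M$ followed by reduction modulo nilpotents (with a parallel construction at odd primes). The substantive step is to promote this quotient-category identification to a canonical surjection of unstable modules $\nlp_s M \twoheadrightarrow \Sigma^s R_s M$; defining $\nlp_{s+1} M$ to be its kernel then gives $\nlp_{s+1} M \in \Nil_{s+1}$ and the desired isomorphism $\nlp_s M/\nlp_{s+1} M \cong \Sigma^s R_s M$. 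Naturality in $M$ is automatic from functoriality, and convergence follows from the degree estimate that modules in $\Nil_s$ are concentrated in degrees $\geq s$ (true for $s$-fold suspensions of non-negatively graded unstable modules and preserved under subobjects, quotients, extensions, and colimits), so $\nlp_s M$ is trivial below degree $s$ and $\bigcap_{s \geq 0} \nlp_s M = 0$.

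The main obstacle is precisely the promotion from an isomorphism in the Serre quotient $\Nil_s/\Nil_{s+1}$ to an honest surjection of unstable modules. This is nontrivial because reduced modules can admit nonzero nilpotent quotients---for example $(x) \twoheadrightarrow (x)/(x^4)$ inside $\F_2[x]$ with $|x|=1$---so surjectivity does not come for free; the required rigidity is supplied by the explicit $\Phi$-theoretic description of $R_s M$ developed in \cite{S94}.
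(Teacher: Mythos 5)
The paper does not prove this proposition---it is a citation to \cite{S94,K95}---so the comparison is with the argument in those references; note also that the paper's restatement of the proposition in Section~\ref{Nil} already identifies $\nlp_s$ as the right adjoint to $\Nil_s \hookrightarrow \U$, which points to the cleaner route. Your base case and your connectivity argument for convergence are fine, but the inductive step has a genuine gap and its logical structure is inverted.

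First, the uniqueness claim is false: the class of $\nlp_s M$ in $\Nil_s/\Nil_{s+1} \simeq \U/\Nil_1$ is \emph{not} represented by a unique reduced module. For instance $\Phi\F2[u] \cong \F2[u^2] \hookrightarrow \F2[u]$ (with $|u|=1$) is an inclusion of two non-isomorphic reduced modules with nilpotent cokernel, so they coincide in $\U/\Nil_1$; a canonical $R_s M$ must therefore be produced by an explicit construction, not extracted by uniqueness. Second, and more seriously, defining $\nlp_{s+1} M$ as the kernel of a surjection you have not yet built makes existence and naturality circular: the surjection is exactly the thing in question, and ``naturality is automatic from functoriality'' has nothing to latch onto. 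The standard argument runs the other way: define $\nlp_{s+1} M$ directly as the largest submodule of $M$ lying in $\Nil_{s+1}$---this exists by precisely the closure properties you invoked in your base case and is manifestly natural in $M$---so the surjection $\nlp_s M \twoheadrightarrow \nlp_s M/\nlp_{s+1} M$ is simply the quotient map, already in hand. The nontrivial content is then a structure theorem: a module $N \in \Nil_s$ with $\nlp_{s+1} N = 0$ is an $s$-fold suspension of a reduced module. That is where the $\Phi$-functor analysis of \cite[Section~6.2]{S94} actually enters, and it is a statement about identifying the \emph{target} of a canonical map, not about rigidifying an abstract isomorphism in a Serre quotient. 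You have correctly located where the difficulty lives, but the framing prevents the argument from closing.
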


A reduced unstable module $M$ is said to have degree $n \in \N$  (written  $ 
\deg(M)=n$) if  $M
\in \U_n\setminus \U_{n-1}$, otherwise $\deg (M)= \infty$. Following Kuhn, 
define 
the  profile function 
$w_M\colon \N \rightarrow  \N \cup \{\infty \}$ of an 
unstable module $M$ by 
$$
w_M(i) := \deg (R_i M). 
$$

Recall that the module of indecomposable elements $
QH^*X := \tilde H^*X /(\tilde H^*X)^2,
$ of the cohomology  of a pointed space 
$X$ is an unstable module (here $\tilde H^* X$ is the augmentation ideal). Set 
$w_X:=w_{H^*X}$ and
$q_X :=w_{QH^*X}$.

The following theorem provides a generalization of Theorem \ref{KT} and 
stresses the relationship between the profile functions $w_X$ and $q_X$
 (which is examined in greater detail in Section \ref{sect:profile}).  
A version of the theorem was announced with a sketch proof in \cite{GS12}; 
 the current statement strengthens and unifies existing results.

\begin{thm} 
\label{thm:general}
Let $X$ be a space such that $\tilde H^*X$ is nilpotent.
The following conditions are equivalent:
\begin{enumerate}
\item 
\label{cond:U0}
$H^*X \in \U_0$;
\item 
\label{cond:wX0}
$w_X=0$;
\item 
\label{cond:qX0}
$q_X=0$; 
\item 
\label{cond:wXleqId}
$w_X \leq \id$;
\item 
\label{cond:qXleqId}
$q_X \leq \id$; 
\item 
\label{cond:wX-Id_bounded}
$w_X-\id$ is bounded. 
\end{enumerate}
\end{thm}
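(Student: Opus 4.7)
My plan is to split the six equivalences into routine implications plus the single hard implication $(6)\Rightarrow(1)$, where Lannes' mapping space theorem must do the essential work.

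I would first dispose of the direct implications. Since subquotients of locally finite unstable modules are locally finite, condition $(1)$ forces each $R_s H^*X$ and $R_s QH^*X$ into $\U_0$. A reduced unstable module concentrated in positive degrees that lies in $\U_0$ must vanish: any nonzero element $x$ satisfies $\Sq_0^k x \neq 0$ for every $k\geq 0$ by reducedness, and these elements inhabit strictly increasing degrees $2^k|x|$, contradicting local finiteness. This yields $(1)\Rightarrow(2)$ and $(1)\Rightarrow(3)$. The remaining ``downward'' implications $(2)\Rightarrow(4)\Rightarrow(6)$ and $(3)\Rightarrow(5)$ follow from the definitions together with the elementary observation that a locally finite $H^*X$ has a nilpotent filtration with only finitely many nonzero reduced pieces.

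Next I would bridge the $w$- and $q$-conditions using Section~\ref{sect:profile}. Under the hypothesis $\tilde H^*X\in\Nil_1$, the surjection $\tilde H^*X\twoheadrightarrow QH^*X$ induces surjections $R_s H^*X\twoheadrightarrow R_s QH^*X$ for $s\geq 1$, hence $q_X\leq w_X$ and so $(2)\Rightarrow(3)$, $(4)\Rightarrow(5)$. For the reverse direction, the associated graded of the augmentation-ideal filtration on $H^*X$ is a subquotient of the symmetric algebra on $QH^*X$; combined with the behaviour of the Krull and nilpotent filtrations under tensor products, this bounds $w_X$ in terms of $q_X$ up to a uniform shift, closing $(5)\Rightarrow(4)$ and $(3)\Rightarrow(2)$.

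The hard implication is $(6)\Rightarrow(1)$, where I would invoke Lannes' isomorphism $H^*\map(BV,X)\cong T_V H^*X$. The key module-theoretic fact is that $T_V$ is exact, commutes with the nilpotent filtration, preserves reducedness, and strictly lowers the Krull degree of a nonzero reduced module by $\dim V$; in particular, on a reduced module $M$ of Krull degree $n$, one has $T_{\F_p}^{n+1}M = 0$. Assuming $w_X(s)\leq s + C$ for all $s$ where $w_X$ is finite, each $R_s H^*X$ has Krull degree at most $s+C$, so iterating $T_{\F_p}$ enough times annihilates all of its reduced parts. Translated through Lannes' theorem, an iterated mapping space $\map((B\F_p)^N,X)$ has cohomology in $\U_n$ for some finite $n$, and Theorem~\ref{KT} then gives $H^*X\in\U_0$.

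The chief obstacle is this final step: propagating the constant $C$ through the iteration of $T_V$ while keeping careful track of the nilpotent filtration, and verifying that $1$-connectedness, $T_V$-niceness and finite type persist through the iterated mapping spaces, so that Lannes' theorem remains applicable and the induction genuinely terminates.
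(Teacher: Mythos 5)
Your handling of the easy implications is essentially in line with the paper: $(1)\Leftrightarrow(2)$ from the characterization of local finiteness, $(2)\Rightarrow(4)\Rightarrow(6)$ being formal, and the bridge between the $w$- and $q$-conditions via the filtration $\bar K \supset (\bar K)^2$ and the surjection $\bar K\otimes\bar K\twoheadrightarrow(\bar K)^2$, which is how Proposition \ref{NS}, Corollary \ref{cor:wK0} and Corollary \ref{cor:wK_qK_Id} proceed. (Minor quibble: your parenthetical ``a locally finite $H^*X$ has only finitely many nonzero $R_s$'' is false in general — $\bigoplus_s \Sigma^s\Fp$ is locally finite with $R_s\neq 0$ for all $s$ — but the implications $(2)\Rightarrow(4)\Rightarrow(6)$ are trivial without it.)

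The core step $(6)\Rightarrow(1)$ is where your argument breaks. You want to iterate $T$ to push $H^*X$ into some $\U_n$ and then invoke Theorem \ref{KT}. This cannot work: by Proposition \ref{profile_properties}, $w_{\bar T M}=\partial w_M$, so after $N$ iterations $w_{\bar T^N H^*X}(s)=\max\{0, w_X(s)-N\}$. If $w_X-\id$ is bounded but $w_X\neq 0$, then $w_X$ is still unbounded as a function of $s$ (e.g.\ $w_X(s)=s$), and hence so is $w_{\bar T^N H^*X}$ for every finite $N$. By Proposition \ref{nilp}(4), an unstable module with unbounded profile function lies in no $\U_n$; therefore no finite iteration of $T$ lands the cohomology in the range where Theorem \ref{KT} applies, and the induction you describe does not terminate. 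Indeed, the logic of the paper runs the other way: Theorem \ref{thm:general} is the stronger statement and \emph{implies} Theorem \ref{KT} (by the observation in the Introduction that $H^*X\in\U_n$ forces $w_{\Sigma X}-\id$ to be bounded), so a proof that reduces $(6)\Rightarrow(1)$ to \ref{KT} would need a genuinely new idea to cross the gap from ``$w_X-\id$ bounded'' to ``$w_X$ bounded''.

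The paper's actual argument for $(6)\Rightarrow(2)$ is a contradiction by obstruction theory and is not captured by your sketch. It reduces (via $\Delta^{s+d-1}$ and then passing to $Z=\Omega(\Sigma X)^\wedge_p$, so that $H^*Z\cong\bT(\tilde H^*X)$) to a space $Z$ whose lowest nonzero reduced layer $R_sH^*Z$ is in $\U_1\setminus\U_0$. It then (i) realizes, via Lannes' theorem and the Hurewicz theorem, a map $\varphi_s:\Sigma^sB\Z/p\to Z$ detecting that layer; (ii) shows algebraically that $\varphi_s$ cannot extend over $\Sigma^{s-1}K(\Z/p,2)$ because $H^*K(\Z/p,2)$ is reduced (Proposition \ref{EML}); and (iii) shows topologically, using Milnor's filtration of $K(\Z/p,2)$ and the $(n+s-2)$-connectivity of $\mappt(B\Z/p^{\wedge n},Z)$ coming from the profile-function estimates (Lemmas \ref{lem:profile_T_connectivity} and \ref{lem:w_M-id-bounded}), that all obstructions to such an extension vanish. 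None of these ingredients — the construction of $\varphi_s$, the reducedness of $H^*K(\Z/p,2)$, the Milnor filtration, or the connectivity estimate driving the obstruction-theoretic extension — appears in your proposal, and they are exactly what replaces the reduction to \ref{KT} that your sketch relies on.
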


To see that this result implies Theorem \ref{KT}, let $X$ be a space such that
$H^* (X) \in \U_n$. 
 This condition implies easily (see Sections \ref{Krull} and \ref{Nil}) that
$w_{\Sigma X} - \id$ is bounded, 
hence $H^* (\Sigma X)$ is locally finite, by Theorem \ref{thm:general}, thus
$H^* X$ also.

Theorem \ref{thm:general} provides evidence  for the following:

\begin{conj}  
\label{conj1}
Let  $X$ be a space such that  $\tilde H^*X$ is
nilpotent.  If $q_X$ is bounded, then $H^*X \in \U_0$.
\end{conj}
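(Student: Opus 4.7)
The plan is to reduce Conjecture~\ref{conj1} to Theorem~\ref{thm:general} by upgrading the hypothesis ``$q_X$ bounded'' to one of the equivalent conditions appearing there, most conveniently to ``$w_X - \id$ bounded''. My proposed attack combines Lannes' mapping space theorem with a spectral sequence reconstruction of $H^*X$ from its indecomposables.

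The first step is a Lannes-type reduction. The functor $T_V$ is exact, preserves reducedness, commutes with the nilpotent filtration, does not increase Krull degree, and is compatible with the functor $Q$ of indecomposables. Using $T_V H^*X \cong H^* \map(BV, X)$, a uniform bound on $q_X$ propagates to $q_Y$ for $Y = \map(BV, X)$, and the linearization part of $T_V$ extracts the reduced layers $R_s QH^*X$ (up to suspension) as modules lying in a fixed $\U_N$, where $N$ is the assumed bound on $q_X$. This provides the technical flexibility to analyse the problem evaluated on arbitrary $V$ without losing the boundedness hypothesis.

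The substantive step is to control the Krull degrees of the reduced layers $R_s H^*X$ of the algebra itself from those of its indecomposables. A natural tool is a bar (or Eilenberg--Moore) type spectral sequence, whose input is built from iterated tensor products of the $R_i QH^*X$. If every such layer lies in $\U_N$, a $k$-fold tensor product lies in $\U_{kN}$; the content of the conjecture is that the bookkeeping organises itself so that the value of $k$ contributing non-trivially to $R_s H^*X$ grows at most linearly in $s$, yielding $w_X \leq c \cdot \id$ for some constant $c$, whence Theorem~\ref{thm:general}\eqref{cond:wXleqId} applies after an iteration or interpolation argument.

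The main obstacle is precisely this second step. Multiplication mixes Krull filtration components in ways that are a priori difficult to bound, and the Eilenberg--Moore spectral sequence is delicate when the cohomology of the base is not itself well-understood. Overcoming this would likely require a refined analysis of the Steenrod action --- in particular of $\Sq_0$ --- on decomposable elements, in the spirit of the arguments of \cite{GS13} that established Theorem~\ref{KT}, combined with the strengthened hold on the profile functions provided by the first step. It is exactly this control issue that keeps the statement a conjecture rather than a theorem in the present work.
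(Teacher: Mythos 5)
This statement is explicitly a \emph{conjecture} in the paper: there is no proof to compare your attempt against, and the authors present Theorem~\ref{thm:general} only as \emph{evidence} for it. Your proposal is commendably honest about this --- you end by identifying the control issue that ``keeps the statement a conjecture rather than a theorem'' --- so what you have written is a research programme, not a proof, and cannot be assessed as one.

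That said, a few remarks on the programme itself. Your proposed reduction (upgrade ``$q_X$ bounded'' to ``$w_X - \id$ bounded'' and invoke Theorem~\ref{thm:general}) does not go through, and it is worth seeing precisely why. Proposition~\ref{NS} gives $w_K \leq \sup_t \{[w_{QK}]^{\circ t}\}$. If $q_X$ is bounded by a constant $N$, the $t$-fold $\circ$-iterate of a function bounded by $N$ and vanishing at $0,1$ can contribute up to roughly $iN$ at argument $i$ (the number of positive parts in a composition of $i$ is bounded by $i$, and each contributes at most $N$). So the best one extracts from the paper's machinery is $w_X(i) \lesssim N i$, i.e.\ $w_X \leq N \cdot \id$ up to constants, \emph{not} $w_X - \id$ bounded. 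For $N \geq 2$ this is strictly weaker than any of the equivalent conditions in Theorem~\ref{thm:general}, so the Lannes-theoretic argument of Section~\ref{sect:proofs_GNS_GS} does not apply: the connectivity estimates (Lemma~\ref{lem:profile_T_connectivity}) used there to trivialize the obstruction groups break down when the slope of $w_X$ exceeds~$1$. This is exactly the quantitative wall separating what is proved from what is conjectured.

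Your instinct that the Eilenberg--Moore spectral sequence is the right tool for closing this gap is consistent with what the paper itself says: after Theorem~\ref{thm:R2} the authors remark that ``a generalization of this approach may lead to a proof of Conjecture~\ref{conj1}.'' The second half of the paper (Section~\ref{sect:EMSS}) is precisely a first step in that direction, controlling the jump from $R_1$ to $R_2$ via non-vanishing of an $\Ext^1_{\calF}$ class built from the cup product and played off against a vanishing result. Extending this to all layers $R_s$ with uniform control is the open problem. Your sketch of ``bar construction plus Krull-degree bookkeeping'' names the right objects but, as you say, provides no mechanism to prevent the Krull degree of $R_s H^*X$ from growing linearly in $s$ with slope $N$ rather than slope~$1$; absent such a mechanism, the argument cannot close.
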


This should be compared with the following stronger conjecture, which is
equivalent to the unbounded strong realization conjecture of 
\cite{K95}:

 \begin{conj}  
 \label{conj2}
Let  $X$ be a space such that  $\tilde H^*X$ is
nilpotent.  If $q_X$ takes finite values, then  $H^*X \in \U_0$.
\end{conj}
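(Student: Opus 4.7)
The plan is to argue by contradiction, assuming a space $X$ with $\tilde H^*X$ nilpotent, $q_X(i)<\infty$ for every $i\in \N$, yet $H^*X\notin \U_0$. The strategy is to upgrade the argument behind Theorem \ref{thm:general}, replacing its linear bound $q_X\leq \id$ by a stratification according to the values of $q_X$, and to run an induction one nilpotent layer at a time.

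The first step is structural. Using the natural surjection of $QH^*X$ onto the indecomposables of the associated graded $\bigoplus_s \Sigma^s R_s H^*X$ of the nilpotent filtration, the hypothesis $q_X(i)<\infty$ translates into saying that each reduced piece $R_i(QH^*X)$ belongs to some $\U_{n_i}$, with no uniform bound on $n_i$. I would then attempt to control the nilpotent filtration $\{\nlp_s H^*X\}$ inductively on $s$: given that the layers $R_jH^*X$ with $j<s$ are locally finite, deduce the same conclusion for $R_sH^*X$, and propagate. The base case is secured by Theorem \ref{KT}.

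The second step is analytic, following the pattern indicated in the abstract. I would apply Lannes' functor $T_V$ for a judiciously chosen elementary abelian $V$, exploiting that $T_V$ is exact, preserves both the Krull and nilpotent filtrations, and that $T_VH^*X$ is itself the cohomology of a mapping space to which the induction hypothesis can be reapplied. A refined analysis of the $-1$ and $-2$ columns of the Eilenberg-Moore spectral sequence, together with the associated extension data, should then be used to peel one nilpotent layer off and to produce an unstable algebra of strictly smaller nilpotent depth but still containing non-locally-finite content, contradicting the case already settled in Theorem \ref{thm:general}.

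The principal obstacle is precisely the unboundedness permitted by the hypothesis: each application of $T_V$ and each Eilenberg-Moore step can absorb only a bounded amount of Krull complexity, whereas $q_X(i)$ may grow arbitrarily in $i$. A successful proof therefore seems to require either a genuinely transfinite induction on the profile, or, more plausibly, a new purely functorial input in the category $\calF$ of functors from finite-dimensional $\Fp$-vector spaces to $\Fp$-vector spaces, ruling out the existence of a coherent tower of reduced functors with prescribed but unbounded Krull degrees arising as the associated graded of the indecomposables of the mod-$p$ cohomology of a space. This is essentially why the statement remains a conjecture, and why \cite{K95} singles it out as the strong (unbounded) realization conjecture.
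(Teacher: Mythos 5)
This statement is Conjecture~\ref{conj2}, not a theorem: the paper gives no proof, and explicitly identifies it as equivalent to Kuhn's unbounded strong realization conjecture from \cite{K95}. There is therefore no ``paper's own proof'' to compare against. Your proposal honestly recognizes this---it is not a proof but a strategy sketch, and you yourself conclude that ``the statement remains a conjecture.'' So there is no false claim here, only an unfinished plan.

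That said, a few remarks on the plan itself. The diagnosis in your final paragraph is exactly the right one and matches the discussion in \cite{K95}: the proof of Theorem~\ref{thm:general} works because the hypothesis $w_X - \id$ bounded gives, after finitely many applications of $\Delta = \bar T$ (via Proposition~\ref{prop:w_Delta}), a space whose cohomology has profile dominated by a fixed linear function, which then feeds into the connectivity estimate of Lemma~\ref{lem:profile_T_connectivity} and the obstruction-theoretic argument against $Z$. When $q_X$ merely takes finite values, applying $\bar T$ shifts the profile by $\partial$ but can never flatten an unbounded one in finitely many steps, so the induction you propose on nilpotent depth has no terminating base beyond what Theorem~\ref{KT} already covers. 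Your suggestion of appealing to the Eilenberg--Moore analysis of Section~\ref{sect:EMSS} is also somewhat optimistic: the paper explicitly says that a generalization of that method ``may lead to a proof of Conjecture~\ref{conj1}'' (the bounded case), not of Conjecture~\ref{conj2}; Theorem~\ref{infini} only controls the relation between $R_1$ and $R_2$, and even an inductive extension to all pairs $(R_s,R_{s+1})$ would establish local constraints, not the global local-finiteness statement. Finally, note a small imprecision in your first step: you want to work with the profile of $H^*X$ itself (via Proposition~\ref{NS} and Corollary~\ref{cor:wK0}) rather than with ``the indecomposables of the associated graded,'' which is not quite the object the paper manipulates. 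None of this is a gap in a proof, because no proof was offered, but if you intend to develop this further these are the points where the plan as stated does not yet engage the actual difficulty.
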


The second  generalization concerns the first and  second 
layers of the nilpotent filtration. The method of proof is of independent
interest and can be applied in other situations; a generalization of this
approach may lead to a proof of Conjecture \ref{conj1}. 

The fact that the argument is based upon the Eilenberg-Moore spectral 
sequence for computing  $H^*\Omega X$ from $H^* X$ means that the restrictions
upon the space $X$ can be relaxed in the following theorem, in which the prime 
is
taken to be $2$.

\begin{thm} 
\label{thm:R2}  
Let  $X$ be a $1$-connected space such that  $\tilde H^*X$ is of finite type and
nilpotent.  
If  
 $\deg (R_1H^*X)=d \in \N$  then $\deg (R_2H^*X) \geq 2d$.
\end{thm}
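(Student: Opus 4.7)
The plan is to use the mod-$2$ Eilenberg--Moore spectral sequence (EMSS) for the path-loop fibration $\Omega X \to PX \to X$,
\[
E_2^{-s,t} = \Tor^{H^*X}_{-s,t}(\F_2,\F_2) \Rightarrow H^{t-s}\Omega X,
\]
which, under the finite-type and $1$-connectivity hypotheses on $X$, converges strongly and is a spectral sequence of unstable $\A$-modules and Hopf algebras. Let $F_\bullet H^*\Omega X$ be the induced filtration, with $F_s/F_{s+1} \cong E_\infty^{-s,*}$. Following the strategy announced in the abstract, I would concentrate on the $-1$ and $-2$ columns of $E_\infty$ together with the short exact extension
\[
0 \to E_\infty^{-2,*} \to F_1 H^*\Omega X / F_3 H^*\Omega X \to E_\infty^{-1,*} \to 0.
\]

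The first input is that $E_2^{-1,*} = QH^*X$, and because $H^*X$ is nilpotent the canonical map $\tilde H^*X \twoheadrightarrow \tilde H^*X / \nlp_2 H^*X = \Sigma R_1 H^*X$ factors through $QH^*X$, exhibiting $\Sigma R_1 H^*X$ as the reduced quotient of the $-1$-column. By hypothesis this has Krull degree $d$, and the cohomology suspension sends a lift $x \in \tilde H^*X$ of a rank-$d$ detecting class $\xi \in R_1 H^*X$ to a primitive $\sigma(x) \in F_1 H^*\Omega X$ whose image in $E_\infty^{-1,*}$ detects rank $d$. To promote this to a rank-$2d$ class in $E_\infty^{-2,*}$, I would then pick a second element $x'$ inside the rank-$d$ detecting subobject of $R_1 H^*X$: the product $\sigma(x)\sigma(x')$ lies in $F_2$ and represents the class of $[x] \cdot [x']$ under the bar-level pairing $QH^*X \otimes QH^*X \to E_2^{-2,*}$. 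Since the tensor product of two reduced modules of Krull degree $d$ embeds in $H^*B(V \oplus V')$ of rank $2d$, this produces, after passage to reduced layers, a class of Krull degree $2d$ in $E_\infty^{-2,*}$.

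Finally I would transfer this Krull-rank-$2d$ class back to $R_2 H^*X$. At the level of the nilpotent filtration the multiplication on $\tilde H^*X$, combined with the inclusions $(\nlp_1)^2 \subset \nlp_2$ and $\nlp_1 \cdot \nlp_2 \subset \nlp_3$, induces a canonical pairing
\[
\mu\colon R_1 H^*X \otimes R_1 H^*X \to R_2 H^*X
\]
which is the algebraic shadow of the EMSS product $E_\infty^{-1,*} \otimes E_\infty^{-1,*} \to E_\infty^{-2,*}$. Combining the displayed extension with the Hopf algebra structure of $H^*\Omega X$, one shows that the rank-$2d$ detector constructed above is not annihilated by $\mu$, so $R_2 H^*X$ carries a class of Krull degree $2d$ and $\deg R_2 H^*X \geq 2d$. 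The hardest step is exactly this final compatibility: ensuring that the rank-$2d$ detecting element survives the EMSS differentials, the Hopf algebra relations in $H^*\Omega X$, and above all the passage from $\nlp_2 H^*X$ to its reduced quotient $R_2 H^*X$. Control of the "associated extension" --- effectively a relevant piece of $\Ext^1_\U(E_\infty^{-1,*}, E_\infty^{-2,*})$ --- is what makes this comparison work, and its extension to higher $s$-columns is a natural route towards Conjecture \ref{conj1}.
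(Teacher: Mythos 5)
Your proposal correctly identifies the objects the paper works with --- the Eilenberg--Moore spectral sequence, its $-1$ and $-2$ columns, the extension at the $-2$ filtration layer of $H^*\Omega X$, and the cup-product pairing $R_1 H^*X \otimes R_1 H^*X \to R_2 H^*X$. But the core of your argument, namely the claim that ``the rank-$2d$ detector constructed above is not annihilated by $\mu$,'' is precisely the entire content of the theorem, and your outline does not give a mechanism for proving it. A priori, the product of two classes in $\nlp_1 H^*X$ lies in $\nlp_2 H^*X$, but nothing in the structures you invoke (suspension, Hopf algebra multiplication on $H^*\Omega X$) prevents it from falling into $\nlp_3 H^*X$, i.e.\ from dying in $R_2$. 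You flag this as ``the hardest step,'' but the step is in fact the whole theorem; everything preceding it is either formal or a reformulation of the conclusion.

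What the paper does instead is argue by contradiction in the functor category $\calF$, using extension-theoretic rigidity rather than a direct construction. One passes through $f\colon \U \to \calF$ to obtain a short exact sequence $0 \to F_1 \to F_2 \to F_2/F_1 \to 0$ from the $-2$ layer of the EMSS, with $F_1 = f R_1 \tilde H^*X$. The cup product gives a comparison map from $0 \to F_1 \to S^2(F_1) \to \Lambda^2(F_1) \to 0$, where $F_1 \to S^2(F_1)$ is the Frobenius. The two essential inputs, which your outline lacks, are: (a) the Frobenius extension $0 \to F \to S^2(F) \to \Lambda^2(F) \to 0$ is \emph{not} split for any non-constant finite $F$ (a theorem of Kuhn), and (b) $\Ext^1_\calF(\overline{F_1}\otimes\overline{F_1}, G) = 0$ for any $G$ of degree $\le d$, by Franjou--Lannes--Schwartz-type computations. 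If one assumes $\deg R_2 H^*X < 2d$, the cup product map $\overline{F_1}\otimes\overline{F_1} \to f R_2 H^*X$ vanishes on degree grounds, and a bar-construction argument then forces the restricted extension class to factor through $\Ext^1_\calF(\overline{F_1}\otimes\overline{F_1}, F_1)$, which is zero; but pulling back the Frobenius non-splitting shows the class is nonzero. This is the contradiction. In short: your construction builds the candidate class, but proving it lives requires exactly this $\Ext^1_\calF$ argument, and without it the proof does not close. I would suggest reorienting the argument as an argument by contradiction and importing the non-splitting of the Frobenius extension and the $\Ext^1_\calF$ vanishing as the two pillars.
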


The result proved is slightly stronger, giving a precise statement on the cup 
product on $H^*X$ (see Remark 
\ref{rem:strengthen}).

The strategy of proof for Theorem \ref{thm:R2} is different from that of the 
previous results and is related
to that of \cite{S98}. It depends on an 
analysis of the second stage of the Eilenberg-Moore filtration of $H^*\Omega X$
and uses results on triviality and non-triviality of certain extension groups
$\Ext^1_{\calF} (-,-)$,
 where $\calF$ is the category of functors on $\F2$-vector spaces (see Section
\ref{Krull}).

\bigskip

The paper is organized as follows. The Krull filtration is reviewed in Section
\ref{Krull}  and the 
nilpotent filtration in Section \ref{Nil}; using this material, the profile
functions are considered in Section \ref{sect:profile}. Theorem
\ref{thm:general} is proved 
in Section \ref{sect:proofs_GNS_GS}, based on Lannes' theory, which is reviewed 
rapidly. Section \ref{sect:EMSS} 
is devoted to the proof of Theorem \ref{thm:R2}, using the Eilenberg-Moore
spectral sequence.

\begin{rmq}
A first version of this work was made available  by three
of the authors in \cite{NGS14}. The third-named author proposed the current 
approach in Section \ref{sect:EMSS}.
\end{rmq}

\section{The Krull filtration on $\U$}
\label{Krull}

Gabriel introduced the Krull filtration of an abelian category in \cite{Gab62}; 
for
  the category of unstable modules, $\U$, this gives the  filtration 
$$
\U _0 \subset \U _1 \subset \U _2  \subset \ldots \subset \U
$$
by thick subcategories stable under colimits, which  is described in \cite{S94}.

For an abelian category $\mathcal{C}$, the category $\mathcal{C}_0$ is the 
largest thick sub-category generated by the  simple
objects and
stable under colimits; $\U_0$ identifies with the subcategory of locally finite
modules 
($M \in \U$  is locally finite if  $\A x$ is finite for any  $x \in M$).
  The categories $\U_n$ are then defined recursively as follows. 
Having defined $\U_n$, form the quotient
category $\U/\U_n$; then  $\U_{n+1}$ is the pre-image under the canonical 
projection 
$\U\rightarrow \U/ \U_n$ of the subcategory   $(\U/\U_n)_0 \subset (\U/ \U_n)$.

For $M$ an unstable module and $n \in \N$, write  $k_n M$ for the  largest 
sub-module of $M$ that is in $\U_n$.

\begin{prop} 
\cite{K14}
For $M \in \U$, $M = \cup_n k_n M. $
\end{prop}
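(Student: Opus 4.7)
The plan is to reduce the claim to showing that the cyclic submodule generated by any element of $M$ lies in some term of the Krull filtration. Since $\{k_n M\}_{n \geq 0}$ is an increasing chain of submodules of $M$, their union is a submodule; so it is enough to exhibit, for each $x \in M$, an integer $n$ such that $x \in k_n M$, equivalently such that the cyclic submodule $\A x \subset M$ lies in $\U_n$.

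Fix $x \in M$ and set $n := |x|$. By the universal property of the free unstable module $F(n)$ on a generator of degree $n$, the assignment of the canonical generator to $x$ extends to a surjection $F(n) \twoheadrightarrow \A x$. Since $\U_n$ is a thick subcategory of $\U$, and in particular closed under quotients, the whole problem reduces to the single statement
\[
F(n) \in \U_n \qquad \text{for every } n \geq 0.
\]

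This last statement---which is the real substance of the proposition and the main obstacle---I would take as an input from \cite{S94}. It is proved there by analysing the simple objects of the quotient categories $\U/\U_{n-1}$ and using that $F(n)$ is generated in degree $n$; in fact one has the sharper assertion $F(n) \in \U_n \setminus \U_{n-1}$, so that the Krull filtration is strict and $F(n)$ has Krull dimension exactly $n$. Granting $F(n) \in \U_n$, the conclusion is immediate: any $x \in M$ with $|x|=n$ satisfies $x \in \A x \subseteq k_n M$, whence $M = \bigcup_n k_n M$.
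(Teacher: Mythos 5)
Your argument is correct. The paper itself gives no proof, simply citing \cite{K14}; your reduction to cyclic submodules via the surjection $F(|x|)\twoheadrightarrow \A x$, combined with the input $F(n)\in\U_n$ (recorded in the paper as the Example $\Sigma^k F(n)\in\U_n\setminus\U_{n-1}$) and the thickness of $\U_n$, is exactly the standard argument, and is the same mechanism underlying the paper's subsequent citation that every finitely generated unstable module lies in some $\U_d$.
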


\begin{exam}
For $k, n \in \N$, $\Sigma^kF(n) \in \U_n \setminus\U_{n-1}$, where  $ F(n) $ 
is the free unstable module  on a generator  of degree $n$. (In the terminology 
of the Introduction, 
$\deg F(n) = n$.)
\end{exam}

\begin{prop}
(\cite[6.1.4]{S94} and \cite{K95}.)
If $M$ is a finitely generated unstable module, 
\begin{enumerate}
 \item 
 there exists $d \in \N$ such that $M 
\in \U_d$;
\item
for each $n \in \N$, $R_n M$ is finitely generated; 
\item 
the nilpotent filtration of $M$ is finite ($\nlp_s M = 0$ for $s \gg 0$).
\end{enumerate}
\end{prop}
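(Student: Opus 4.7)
The plan is to establish the three parts in order, exploiting the interplay between the Krull and nilpotent filtrations.

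For (1), I choose a finite homogeneous generating set $x_1, \ldots, x_k$ of $M$ in degrees $n_1, \ldots, n_k$, giving a surjection $\bigoplus_{i=1}^k F(n_i) \twoheadrightarrow M$. By the Example, each $F(n_i)$ lies in $\U_{n_i}$, hence in $\U_d$ for $d := \max_i n_i$. Since $\U_d$ is thick and closed under colimits, it contains the finite coproduct $\bigoplus_i F(n_i)$ and then the quotient $M$. For (2), I invoke the classical fact that $\U$ is locally Noetherian: finitely generated unstable modules are Noetherian (see \cite{S94}). Consequently the submodule $\nlp_s M \subseteq M$ is finitely generated, so its quotient $\nlp_s M / \nlp_{s+1} M \cong \Sigma^s R_s M$ is finitely generated, and desuspending gives that $R_s M$ is too.

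Part (3) is the most delicate. Local Noetherianity only yields the ascending chain condition for submodules, whereas what is needed is the eventual vanishing of the decreasing chain $\{\nlp_s M\}$; moreover the nilpotent filtration does not behave cleanly under quotients (for instance $F(1)$ is reduced, so $\nlp_1 F(1) = 0$, yet $F(1)$ admits the nilpotent quotient $\Sigma\F2$), so a simple reduction to the free modules $F(n)$ is not available. My strategy would be induction on the Krull degree $d$ furnished by (1). The base case $d=0$ is clear: a finitely generated locally finite module is a finite sum of finite submodules, hence finite-dimensional, and its nilpotent filtration is then a decreasing chain on a finite-dimensional object with zero intersection (by the stated convergence), so it must terminate at $0$. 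For the inductive step, I would exploit the short exact sequence $0 \to k_{d-1} M \to M \to M/k_{d-1} M \to 0$, in which $k_{d-1} M$ has lower Krull degree and is finitely generated by Noetherianity, while $M/k_{d-1} M$ is ``pure'' of Krull degree $d$, and control how the nilpotent filtration interacts with this sequence.

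The main obstacle is handling the pure quotient $M/k_{d-1}M$: the nilpotent filtration on a module of pure Krull degree $d$ is not in general trivial, and pinning down its finite termination for finitely generated objects requires genuine structural input, typically the classification of simple objects in $\U_d/\U_{d-1}$ together with the injective structure of $\U$ (the Brown--Gitler and Carlsson-type injectives).
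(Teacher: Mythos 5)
Parts (1) and (2) of your argument are correct and complete: the surjection from a finite sum of $F(n_i)$'s, together with the fact that $\U_d$ is thick and stable under colimits, gives (1); and local Noetherianity of $\U$ applied to the submodules $\nlp_s M$, plus the fact that $\Sigma^s$ reflects finite generation, gives (2).

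Part (3), however, has a genuine gap, and you acknowledge it yourself. The plan is an induction on the Krull degree $d$ of $M$. The base case $d=0$ is fine (finitely generated and locally finite forces $M$ finite-dimensional, so the decreasing chain $\{\nlp_s M\}$ with zero intersection must stabilize at $0$). The problem is the inductive step: in the short exact sequence $0 \to k_{d-1} M \to M \to M/k_{d-1} M \to 0$, the submodule $k_{d-1} M$ is handled by the inductive hypothesis, but the quotient $M/k_{d-1} M$ still has Krull degree exactly $d$, so the induction does not reduce the problem. You are therefore left having to prove the statement for finitely generated modules that are ``pure'' of Krull degree $d$, and you do not indicate how to do this beyond gesturing at the classification of simples in $\U_d/\U_{d-1}$ and the injective structure of $\U$. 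That is precisely the content that is missing, and it is not a small detail: this pure case is not obviously any easier than the original statement, so the Krull-filtration induction as proposed does not close.

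Note also that the paper itself does not prove this proposition; it is quoted from \cite{S94} and \cite{K95}. The argument there is not organized around the Krull filtration at all. A more natural line is to work directly with the nilpotent filtration and the loop functor $\Omega$: if $M$ is generated in degrees $\leq N$, then $\Omega^s M = 0$ for $s > N$ (since $\Omega^s F(n) = F(n-s)$ and $\Omega^s$ is right exact), and one then argues that this forces $\nlp_s M = 0$ for $s>N$, using structural facts about $\Omega$ and its derived functor $\Omega_1$ on nilpotent modules. Your observation that the nilpotent filtration does not behave naively under quotients is a good one and is indeed the real subtlety, but resolving it requires engaging with the behaviour of $\Omega$ on the short exact sequence $0 \to \nlp_s M \to M \to M/\nlp_s M \to 0$, not with the Krull filtration.
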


There is a characterization of the Krull filtration in terms of Lannes' 
$T$-functor. The functor $T_V$ (for $V$ an elementary abelian $p$-group) is 
left 
adjoint to 
$M \mapsto H^*BV \otimes M$; $T_{\Fp}$ is denoted simply $T$. Since $H^*B\Z/p$ 
splits in $\U$ as  
$\Fp \oplus \tilde H^*B\Z/p$, the functor $T$ is naturally equivalent to $Id 
\oplus \bar T$, where 
$\bar T$ is left adjoint to $\tilde H^*B\Z/p \otimes -$.

\begin{thm}
\cite{La92,S94}
The functor $T_V$ is exact and commutes with colimits; moreover there is a 
canonical isomorphism
$$
T_V(M_1 \otimes M_2) \cong T_V(M_1) \otimes T_V(M_2);
$$
in particular, $
T_V(\Sigma M) \cong \Sigma T_V(M)
$.
\end{thm}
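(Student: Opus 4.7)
The plan is to treat the four assertions in order, using the adjunction $T_V \dashv (H^*BV\otimes -)$ as the single unifying tool.

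First, the fact that $T_V$ commutes with colimits is purely formal: any left adjoint preserves all colimits, and $T_V$ is defined as a left adjoint. So this requires no work beyond invoking the definition.

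Next, right-exactness of $T_V$ is again automatic from the adjunction. The substantive content is left-exactness. My plan is to reduce this to the statement that $H^*BV$ is injective in $\U$ (the Lannes–Zarati theorem) together with the assertion that tensoring an injective of $\U$ with $H^*BV$ remains injective; a standard dimension-shift / $\mathrm{Hom}$-exactness argument then converts injectivity of the right adjoint into exactness of the left adjoint $T_V$. An equivalent route is to exhibit the value of $T_V$ on the projective generators $F(n)$ explicitly and observe that each $T_V F(n)$ is again projective in $\U$, so that $T_V$ preserves projective resolutions and hence is exact. The main obstacle here is the injectivity of $H^*BV$, which is the heart of Lannes' theory and will be cited rather than reproved.

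For the tensor isomorphism, my plan is to construct a natural transformation
$$
\mu_{M,N}\colon T_V(M\otimes N)\longrightarrow T_VM\otimes T_VN
$$
by adjunction from the composite
$$
M\otimes N \xrightarrow{\eta_M\otimes\eta_N} (H^*BV\otimes T_VM)\otimes(H^*BV\otimes T_VN)
\longrightarrow H^*BV\otimes(T_VM\otimes T_VN),
$$
where the second arrow swaps the middle factors and applies the algebra multiplication on $H^*BV$. (This uses only that $H^*BV$ is a commutative unstable algebra.) To prove $\mu_{M,N}$ is an isomorphism, I would appeal to the exactness and colimit-preservation of $T_V$ established in the first two steps: both sides are exact in each of $M$ and $N$ and commute with colimits, so it suffices to check the statement when $M=F(m)$ and $N=F(n)$. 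There it follows from the known explicit description of $T_V F(n)$ together with the standard decomposition of $H^*BV\otimes H^*BV$ under the algebra multiplication.

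Finally, the suspension formula is a consequence of the preceding items together with the fact that $\Sigma\colon\U\to\U$ is itself a left adjoint (to the unstable loop functor $\tilde\Omega$), so both $T_V\Sigma$ and $\Sigma T_V$ are left adjoints; one then checks that their right adjoints, $\tilde\Omega(H^*BV\otimes -)$ and $H^*BV\otimes \tilde\Omega(-)$, are naturally isomorphic, which follows from the fact that $\tilde\Omega$ acts simply by shifting degrees. Alternatively, since $\Sigma M$ is a natural colimit/subquotient built from copies of $M$ shifted in degree, the isomorphism can be obtained directly by tracking degrees through $T_V$, using that $T_V$ is the $V$-fold iterate of $T=T_{\mathbb F_p}$ and that the tensor formula already proved is compatible with the suspension of a single factor. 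The expected main obstacle throughout the proof is the exactness statement in the second step; the remaining items are formal once exactness and colimit-preservation are in hand.
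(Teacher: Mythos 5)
The paper itself offers no proof of this statement: it is a foundational result of Lannes that is simply cited from \cite{La92} and \cite{S94}, so there is no ``paper's own proof'' to compare against. Your sketch is, however, broadly the standard one and worth reviewing on its merits. The colimit and right-exactness claims are indeed formal, and your primary route to left-exactness is correct: $T_V$ is exact if and only if for every injective $I$ of $\U$ the functor $\Hom_\U(T_V(-),I)=\Hom_\U(-,H^*BV\otimes I)$ is exact, i.e.\ if and only if $H^*BV\otimes -$ preserves injectives; this is then deduced from the classification of injective cogenerators $H^*BW\otimes J(n)$ together with $H^*BV\otimes H^*BW\cong H^*B(V\oplus W)$, with the Lannes--Zarati/Carlsson/Miller injectivity theorem as the deep input, exactly as you say. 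Your proposed ``alternative route'' via projectives is not sound as stated: a right-exact functor that carries projectives to projectives need not be exact, so the observation that $T_VF(n)$ is again projective does not by itself yield left-exactness; dropping this aside would improve the proposal. The construction of the comparison map $\mu_{M,N}$ and the reduction to the generators $F(m),F(n)$ via exactness and colimits is fine.

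The one genuine slip is in the suspension step. In $\U$ the adjunction runs $\Omega\dashv\Sigma$ (the loop functor $\Omega$ is the \emph{left} adjoint of $\Sigma$, as recalled in Section~3 of the paper), so it is wrong to say that $\Sigma$ is left adjoint to a ``loop functor.'' The functor $\Sigma$ does commute with colimits and therefore admits a right adjoint, but that right adjoint is not $\Omega$ and does not simply ``shift degrees,'' so the claimed identification of the two right adjoints needs an actual argument. The cleanest and standard way to obtain $T_V(\Sigma M)\cong\Sigma T_V(M)$ is as a special case of the tensor formula you have already proved: since $\Sigma M\cong\Sigma\Fp\otimes M$, it suffices to check $T_V(\Sigma\Fp)\cong\Sigma\Fp$, which is a short direct computation from the adjunction $\Hom_\U(T_V\Sigma\Fp,N)\cong\Hom_\U(\Sigma\Fp,H^*BV\otimes N)$. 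Replacing the adjoint-juggling by this observation would make the last step both correct and shorter.
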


\begin{thm}
\label{krull}
\cite{S94,K14}
The following  are equivalent:
 \begin{enumerate}
  \item
  $M \in \U_n$ ,
 \item 
 $\bar T^{n+1} M=0$.
 \end{enumerate}
\end{thm}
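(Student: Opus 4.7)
The plan is to proceed by induction on $n$. The base case $n=0$ is the classical characterization of locally finite modules via $\bar T$: if $M \in \U_0$ then I would write $M$ as a filtered colimit of its finite submodules, embed each finite submodule into a direct sum of Brown-Gitler modules (the indecomposable injectives of $\U_0$), and use the vanishing of $\bar T$ on Brown-Gitler modules together with the exactness of $\bar T$ and its commutation with colimits to conclude $\bar T M = 0$; conversely, if $M \notin \U_0$, Lannes' theory provides a non-zero morphism $M \to H^*B\Z/p \otimes N$ for some unstable module $N$, which by adjunction yields $\bar T M \neq 0$.

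For the inductive step, assuming the result for $n$, I would factor $\bar T^{n+2} = \bar T^{n+1} \circ \bar T$ and apply the induction hypothesis to obtain $\bar T^{n+2} M = 0 \Leftrightarrow \bar T M \in \U_n$. The statement to prove therefore reduces to the equivalence
\[
M \in \U_{n+1} \Longleftrightarrow \bar T M \in \U_n.
\]
A direct consequence of the induction hypothesis is $\bar T(\U_n) \subset \U_{n-1} \subset \U_n$, so $\bar T$ descends to an exact, colimit-preserving endofunctor $\overline{\bar T}$ of the quotient category $\U/\U_n$. In this quotient $M \in \U_{n+1}$ translates by definition to $[M] \in (\U/\U_n)_0$, while $\bar T M \in \U_n$ translates to $\overline{\bar T}[M] = 0$, so the equivalence to be proved is precisely the base case statement transported to $\U/\U_n$: an object lies in $(\U/\U_n)_0$ if and only if it is killed by $\overline{\bar T}$.

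I expect establishing this last equivalence inside $\U/\U_n$ to be the main obstacle, since the injective-envelope argument from the base case must be adapted to each quotient category. The cleanest route is via the functorial perspective: the functor $R_0$ identifies finite-type reduced unstable modules with functors on $\Fp$-vector spaces, under which the Krull filtration matches the polynomial-degree filtration on $\calF$ and $\bar T$ corresponds to the difference operator $\Delta$, which lowers polynomial degree by exactly one. The required equivalence inside each quotient then follows from the structure theory of polynomial functors combined with the classification of the indecomposable injectives of $\U$ as summands of Brown-Gitler modules tensored with $H^*BV$.
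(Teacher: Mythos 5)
The paper does not prove this statement; it is cited to \cite{S94,K14}. So there is no internal proof to compare against, and your proposal must stand on its own. Unfortunately it does not yet constitute a proof, and you flag the gap yourself.

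Your reduction in the inductive step is correct and clean: the induction hypothesis gives $\bar T(\U_n)\subset\U_{n-1}\subset\U_n$, so $\bar T$ descends to an exact colimit-preserving $\overline{\bar T}$ on $\U/\U_n$, and the claim $\bar T^{n+2}M=0\Leftrightarrow M\in\U_{n+1}$ is equivalent to the assertion that $(\U/\U_n)_0$ is exactly the kernel of $\overline{\bar T}$. But this last assertion is not a formal consequence of the base case; it needs structural information about $\U/\U_n$ (its simple objects and injective hull theory), which is precisely the content of the theorem and is what makes it non-trivial. Observing that it is ``the base case statement transported to $\U/\U_n$'' restates the difficulty rather than resolving it, and you acknowledge this by calling it ``the main obstacle.'' The paragraph that follows, invoking the functorial picture, is the right instinct but as written is circular: the statement that the Krull filtration on reduced finite-type modules corresponds to the polynomial-degree filtration on $\calF$ (Corollary \ref{poids} in the paper) is itself an equivalent formulation, for reduced $M$, of the very equivalence you are trying to prove, since $\deg f(M)\leq n$ means $\Delta^{n+1}f(M)=0$, which translates back to $\bar T^{n+1}M$ being nilpotent and hence (for reduced $M$) zero. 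You cannot appeal to it without first establishing it.

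A route that does close the argument runs as follows and sidesteps the induction over quotient categories entirely: reduce to reduced modules using Proposition \ref{nilp} of the paper, namely that $\bar T$ commutes with the nilpotent filtration and that $M\in\U_n$ iff every $R_sM\in\U_n$; for a reduced module, identify its image in $\calF$ under the Henn--Lannes--Schwartz embedding and use that $\bar T$ becomes $\Delta$; then the real work is to prove independently, for reduced $M$, that $M\in\U_n$ iff $f(M)$ is polynomial of degree $\leq n$. That last step is where the injective classification (direct summands of $J(k)\otimes H^*BV$, with $J(k)\otimes H^*BV\in\U_{\dim V}$) and an analysis of the simple objects in the successive quotients enter; it is the substantive part of the proof in \cite{K14}, and it is missing here. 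Separately, your base-case converse is imprecise: a nonzero map $M\to H^*BV\otimes N$ gives $TM\to N$ nonzero, which shows $TM\neq 0$, not immediately $\bar T M\neq 0$; you must arrange the map to land in $\tilde H^*B\Z/p\otimes N$ (or more carefully exploit non-local-finiteness) to isolate the $\bar T$ summand.
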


\begin{cor}
\label{krull-t}
If $M \in \U_m$ and $N \in \U_n$ then $ M \otimes N \in \U_{m+n}$.
\end{cor}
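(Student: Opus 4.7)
The plan is to apply the criterion of Theorem~\ref{krull} in both directions: it suffices to show that $\bar T^{m+n+1}(M\otimes N)=0$, using the hypotheses $\bar T^{m+1}M=0$ and $\bar T^{n+1}N=0$.

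The first step is to record the elementary formula
$$
\bar T(A\otimes B)\;\cong\;\bar T A\otimes B\;\oplus\;A\otimes \bar T B\;\oplus\;\bar T A\otimes \bar T B,
$$
obtained by expanding $T(A\otimes B)\cong TA\otimes TB\cong (A\oplus \bar T A)\otimes(B\oplus \bar T B)$ via the canonical splitting $T\cong Id\oplus\bar T$ and the tensor-product isomorphism recalled just before Theorem~\ref{krull}, and then identifying $\bar T(A\otimes B)$ with the complement of the summand $A\otimes B$.

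The main step will be an induction on $k$ showing that $\bar T^{k}(M\otimes N)$ is (canonically) a finite direct sum of terms of the form $\bar T^{a}M\otimes \bar T^{b}N$ with $a+b\geq k$. The inductive step applies the displayed formula summand-by-summand to a decomposition of $\bar T^{k}(M\otimes N)$, and each of the three resulting pieces has its $(a,b)$-indices increased by at least one in total; exactness of $\bar T$ and its commutation with direct sums (Theorem before Theorem~\ref{krull}) make this bookkeeping clean.

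Finally I would take $k=m+n+1$: in every summand $\bar T^{a}M\otimes \bar T^{b}N$ one must have $a\geq m+1$ or $b\geq n+1$, so by Theorem~\ref{krull} applied to $M$ or to $N$, at least one tensor factor is zero. Hence $\bar T^{m+n+1}(M\otimes N)=0$, and Theorem~\ref{krull} then gives $M\otimes N\in\U_{m+n}$. The argument is wholly formal; there is no real obstacle, the only delicate point being the correct combinatorial control of the $(a,b)$-bidegrees in the induction.
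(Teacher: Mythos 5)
Your proof is correct and is the argument that the paper implicitly intends: the corollary is stated without proof immediately after Theorem~\ref{krull}, and the decomposition $\bar T(A\otimes B)\cong \bar T A\otimes B\oplus A\otimes \bar T B\oplus \bar T A\otimes \bar T B$ coming from $T(A\otimes B)\cong TA\otimes TB$ and $T\cong \id\oplus\bar T$ is exactly the right tool. The induction and the final counting $a+b\geq m+n+1\Rightarrow a\geq m+1$ or $b\geq n+1$ are both sound.
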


There is also a combinatorial characterization of modules in $\U_n$ which are 
of finite type. 
This is stated here for  $p=2$; there are analogous results for odd primes.
Denote by  $\alpha(k)$ the sum of the digits in the binary 
expansion of $k$.
 
\begin{thm}
\cite{S94,S06} 
\label{thm:combinat_krull}
For $M$ an unstable ${\mathcal A}_2$-module and $n \in \N$, 
\begin{enumerate}
 \item 
if $M$ is reduced, $M \in \U_n$ if and only if  $M^j=0$ for $\alpha (j)>n$;
\item 
if $M$ is finitely generated, $M \in \U_n$ if and only if its Poincar\'e
series $\Sigma_i  a_it^i$ has the following property:  there exits 
$k \in \N$ such that, if   $a_d \neq 0$, then  $\alpha 
(d-i) \leq n$, for some  $0 \leq i \leq k$.
\end{enumerate}
\end{thm}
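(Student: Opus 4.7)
The plan is to prove (1) using Theorem~\ref{krull} together with an explicit computation of $\bar T$ on $F(1)$ and its tensor powers, and then to deduce (2) from (1) using the finite nilpotent filtration available for finitely generated unstable modules.

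For (1), Theorem~\ref{krull} reduces us to showing that a reduced $M$ satisfies $\bar T^{n+1} M = 0$ if and only if $M$ is supported on the degrees $\{j : \alpha(j) \leq n\}$. The pivotal computation is $T F(1) \cong F(1) \oplus F(0)$, whence $\bar T F(1) \cong F(0)$, concentrated in degree $0$. Using the multiplicative property of $T_V$, $T F(1)^{\otimes n} \cong (F(1) \oplus F(0))^{\otimes n}$, so $\bar T^{n+1} F(1)^{\otimes n} = 0$ and $F(1)^{\otimes n}$ has support exactly $\{j : \alpha(j) \leq n\}$. The ``if'' direction then follows by verifying that the full subcategory of reduced unstable modules supported on $\{j : \alpha(j) \leq n\}$ is sent by $\bar T$ into the analogous class for $n-1$; iterating yields $\bar T^{n+1} M = 0$. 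For the ``only if'' direction, one uses the structure theory of reduced unstable modules: any reduced $M \in \U_n$ embeds into a direct product of reduced objects controlled by $F(1)^{\otimes n}$ (equivalently, by the Steinberg-type summands of $H^*B(\F2)^n$), all of whose supports lie in $\{j : \alpha(j) \leq n\}$. I expect this structural step, available from \cite{S94}, to be the main obstacle.

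For (2), the proposition on finitely generated modules stated above gives that the nilpotent filtration of $M$ is finite, say $\nlp_{k+1} M = 0$, with reduced finitely generated subquotients $\Sigma^s R_s M$ for $0 \leq s \leq k$. Since $\U_n$ is thick and suspension is an exact self-equivalence preserving Krull level, $M \in \U_n$ if and only if each $R_s M \in \U_n$. Combined with (1), this gives the forward implication: if $M \in \U_n$, then $a_d \neq 0$ forces $(\Sigma^s R_s M)^d \neq 0$ for some $s \leq k$, hence $\alpha(d - s) \leq n$.

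For the converse of (2), assume the Poincar\'e series condition with witness $k$, and suppose for contradiction that $R_t M \notin \U_n$ for some $t$. By (1) there exists a nonzero $x \in R_t M^{j_0}$ with $\alpha(j_0) > n$; since $R_t M$ is reduced, the iterates $\Sq_0^r x$ are nonzero in degree $2^r j_0$ for all $r \geq 0$, each contributing $a_{2^r j_0 + t} \neq 0$ in $M$. The hypothesis then yields $i_r \in [0,k]$ with $\alpha(2^r j_0 + t - i_r) \leq n$. A direct case analysis on the binary expansion of $2^r j_0 + c$ for fixed $c = t - i_r$ with $|c|$ bounded shows this is impossible for large $r$: when $c \geq 0$ and $r > \log_2 c$, one has $\alpha(2^r j_0 + c) = \alpha(j_0) + \alpha(c) > n$; when $c < 0$, writing $c' = -c$ and using standard identities on $\alpha$, $\alpha(2^r j_0 - c')$ grows linearly in $r$ and so eventually exceeds $n$. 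This contradicts the hypothesis and completes the converse.
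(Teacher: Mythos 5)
The paper does not prove this theorem; it is quoted from \cite{S94,S06}, so there is no internal proof to compare your proposal against. Evaluating it on its own terms: your derivation of (2) from (1) is correct and cleanly executed. For the forward implication you combine the finiteness of the nilpotent filtration for a finitely generated module with Proposition~\ref{nilp}(4) to get $R_sM\in\U_n$ for the finitely many $s$ with $R_sM\neq 0$, and then apply (1) to each $R_sM$; the witness $k$ is the length of the nilpotent filtration. For the converse, the combinatorial argument is the right one: if some $R_tM\notin\U_n$, reducedness gives $\Sq_0^r x\neq 0$ in degree $2^rj_0$, hence $a_{2^rj_0+t}\neq 0$; the offsets $c=t-i_r$ lie in the bounded window $[t-k,t]$, and for $r\gg 0$ one has $\alpha(2^rj_0+c)=\alpha(j_0)+\alpha(c)>n$ when $c\ge 0$, while for $c<0$ the identity $\alpha(2^r-c')=r-\alpha(c'-1)$ forces $\alpha(2^rj_0+c)=\alpha(j_0-1)+r-\alpha(c'-1)\to\infty$. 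This is a genuine contradiction.

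The problem is that part (1), on which all of (2) rests, is not actually proved, and the gap is not confined to the direction you flag. You explicitly defer the ``only if'' direction (a reduced $M\in\U_n$ is supported on $\{\alpha\le n\}$) to an unstated embedding result; you are right that this is a real obstacle, since the needed structure theory (Brown--Gitler modules, or an explicit description of the nilclosure $\ell(fM)$ for $F$ polynomial) is exactly the content of the cited results. But the ``if'' direction is also unverified: the assertion that $\bar T$ carries a reduced module supported on $\{j:\alpha(j)\le n\}$ to one supported on $\{j:\alpha(j)\le n-1\}$ is stated as immediate, and it is not. The functor $\bar T$ does not act degreewise, and extracting the support of $\bar T M$ from that of $M$ requires the same nontrivial input as the other direction. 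Your computations $\bar T F(1)\cong F(0)$ and the support analysis of $F(1)^{\otimes n}$ are correct (modulo the minor slip that the support of $F(1)^{\otimes n}$ is a proper subset of $\{\alpha\le n\}$, e.g.\ $0$ and $1$ are omitted for $n\ge 2$), but they treat only these specific modules, not an arbitrary reduced one. As written, (1) is a plan rather than a proof, so the argument as a whole is incomplete.
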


Let $\calF$ be the category of functors from finite dimensional $\Fp$-vector 
spaces to $\Fp$-vector spaces. There is an exact functor \cite{HLS93}  $f 
\colon  \U \ra \calF$ defined by 
$f(M)(V):=\Hom_{\U}(M,H^*(BV))'=T_V(M)^0$ which induces an embedding of 
$\U/\Nil_1 $ in $\calF$.

The functor $\bar T$ corresponds to the difference functor $\Delta : \calF 
\rightarrow \calF$ which is defined on $
F \in \calF$
by
$$
\Delta(F)(V):=\Ker\big(F(V \oplus \Fp) \ra F(V)\big).
$$
Namely, for $M \in \U$, $\Delta (fM) \cong f (\bar T M)$.

Let $\calF_n  \subset \calF $ be the subcategory of polynomial functors of 
degree at most $n$, defined as the full subcategory of functors $F$ such that 
 $\Delta^{n+1}(F)=0$. The polynomial degree of a functor $F$ is written $\deg F 
\in \N \cup \{\infty \}$.

By Theorem \ref{krull}, the following holds:

\begin{prop}
For $n \in \N$, the functor $f : \U \rightarrow \calF$  restricts to an exact 
functor $f : \U_n \rightarrow \calF_n$. 
\end{prop}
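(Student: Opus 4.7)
The plan is to derive this essentially as a formal consequence of the three facts already in hand: the exactness of $f$, the natural isomorphism $\Delta(fM)\cong f(\bar T M)$, and the characterization $M\in\U_n \iff \bar T^{n+1}M = 0$ (Theorem \ref{krull}) together with the dual characterization $F\in\calF_n \iff \Delta^{n+1}F = 0$ that serves as the \emph{definition} of $\calF_n$.

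First, exactness is already granted: $f$ is exact on all of $\U$, hence its restriction to the full subcategory $\U_n$ is exact. So the content of the proposition is just that $f$ sends $\U_n$ into $\calF_n$. For this I would argue by induction on $n$, or equivalently by iterating the natural isomorphism $\Delta \circ f \cong f \circ \bar T$. Taking this isomorphism $n+1$ times and using naturality yields
$$
\Delta^{n+1}(f M) \;\cong\; f(\bar T^{n+1} M)
$$
for every $M\in \U$. If now $M\in\U_n$, Theorem \ref{krull} gives $\bar T^{n+1}M = 0$, whence $\Delta^{n+1}(fM)\cong f(0)=0$, so $fM\in\calF_n$ by definition.

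The only step that requires a moment's care is justifying the iteration. One should verify that the isomorphism $\Delta(fM)\cong f(\bar T M)$ is natural in $M$, so that applying it inductively gives $\Delta^{k}(fM)\cong f(\bar T^{k}M)$ for all $k\ge 0$. This is immediate from the construction, since the isomorphism arises from comparing the two characterizations of $\bar T$ (adjunction with $\tilde H^* B\Z/p\otimes -$) and $\Delta$ (kernel of restriction along $V\oplus \Fp \to V$) via $f$.

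There is no real obstacle here: the proposition is a direct translation, under $f$, of Theorem \ref{krull} into the corresponding statement about polynomial functors, and the main point is simply to record that the functor $f$ intertwines the two filtrations $\{\U_n\}$ and $\{\calF_n\}$ because it intertwines the operations $\bar T$ and $\Delta$ that define them.
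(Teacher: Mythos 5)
Your argument is exactly the one the paper intends: the proposition is stated immediately after the line ``By Theorem \ref{krull}, the following holds,'' and the implicit proof is precisely the iteration $\Delta^{n+1}(fM)\cong f(\bar T^{n+1}M)$ combined with the definition of $\calF_n$ and the exactness of $f$. Your write-up just makes this explicit; there is no substantive difference.
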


\begin{cor}
  \cite{S94} \label{poids}
For $M$ a reduced module, the following are equivalent 
\begin{enumerate}
 \item 
$M\in  \U_d$;
\item 
$deg (fM) \leq d$.
\end{enumerate}
 \end{cor}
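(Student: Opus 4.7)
The plan is to deduce Corollary \ref{poids} directly from Theorem \ref{krull}, via the compatibility $\Delta(fM) \cong f(\bar T M)$ recalled just above the statement, combined with the faithfulness of $f$ on reduced unstable modules.

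Iterating the natural isomorphism $\Delta(fM) \cong f(\bar T M)$, a straightforward induction gives $\Delta^{k}(fM) \cong f(\bar T^{k} M)$ for every $k \in \N$. The implication (1) $\Rightarrow$ (2) is then immediate, and does not require $M$ to be reduced: if $M \in \U_d$, Theorem \ref{krull} gives $\bar T^{d+1} M = 0$, whence $\Delta^{d+1}(fM) = 0$, which is exactly the condition $\deg(fM) \leq d$.

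For the converse (2) $\Rightarrow$ (1), suppose $\deg(fM) \leq d$, so that $f(\bar T^{d+1} M) = 0$. The key step is that $\bar T^{d+1} M$ is itself reduced: this rests on the classical fact from Lannes' theory \cite{La92,S94} that $T_V$, and hence $\bar T$, preserves the full subcategory of reduced unstable modules. Granted this, the embedding $f\colon \U/\Nil_1 \hookrightarrow \calF$ implies that $f$ detects membership in $\Nil_1$, so $\bar T^{d+1} M \in \Nil_1$; but a module that is both reduced and nilpotent must be zero, and hence $\bar T^{d+1} M = 0$. A second appeal to Theorem \ref{krull} then gives $M \in \U_d$.

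The only point that goes beyond formal manipulation is the preservation of reduced modules under $T_V$, which is precisely where the hypothesis that $M$ is reduced enters essentially; without this hypothesis the statement would fail, since any non-zero $M \in \Nil_1$ has $fM = 0$ yet need not lie in $\U_d$ for any given $d$.
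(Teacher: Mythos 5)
Your proof is correct, and it is the natural argument given the material the paper recalls (the paper itself only cites \cite{S94} for this corollary): (1)$\Rightarrow$(2) is the content of the proposition immediately preceding, via $\Delta^k(fM) \cong f(\bar T^k M)$ and Theorem~\ref{krull}, and (2)$\Rightarrow$(1) follows from $\ker f = \Nil_1$ together with the fact that $\bar T$ preserves reduced modules. One small remark: the preservation of reduced modules under $T$ (hence $\bar T$, as a direct summand) need not be invoked as an external fact from \cite{La92,S94} — it is an immediate consequence of Proposition~\ref{nilp}, since $\nlp_1 T M \cong T \nlp_1 M = 0$ when $M$ is reduced, which keeps the whole argument self-contained within the paper's stated results.
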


\section{The nilpotent filtration}
\label{Nil}

The main results of the paper concern spaces $X$ such that the positive degree 
elements of the cohomology $ H^*X$
are nilpotent; by the restriction axiom for unstable algebras this corresponds 
to $\tilde H^* X$ being nilpotent as an unstable module. 

For $p=2$, the following definition applies, where $\Sq_0$ is the operator $x 
\mapsto \Sq^{|x|}(x)$.  (A similar characterization exists for $p$ odd.)

\begin{defi} 
An unstable $\mathcal{A}_2$-module $M$ is nilpotent if, for any $x \in M$, 
there exists $k$
such that $\Sq_0^kx=0$.
\end{defi}

The archetypal example of a nilpotent unstable module is a suspension and, in 
general, one has:

\begin{prop}
\cite{S94} 
An unstable module is nilpotent if and only if it is the 
colimit of unstable modules  which have  a finite filtration whose quotients 
are suspensions.
\end{prop}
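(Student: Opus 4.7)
The plan is to prove the two implications separately, exploiting the closure properties of the subcategory $\Nil_1$ of nilpotent unstable modules. For the ``if'' direction, I would first note that any suspension $\Sigma N$ lies in $\Nil_1$: for $y = \Sigma x \in \Sigma N$, instability forces $\Sq_0 y = \Sq^{|x|+1}\Sigma x = \Sigma \Sq^{|x|+1}x = 0$. Next, $\Nil_1$ is closed under extensions: given $0 \to M' \to M \to M'' \to 0$ with $M', M'' \in \Nil_1$ and $x \in M$, one picks $k$ so that the image of $\Sq_0^k x$ in $M''$ vanishes, hence $\Sq_0^k x \in M'$, and then $j$ with $\Sq_0^{j+k} x = 0$. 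Iterating, any unstable module with a finite filtration whose subquotients are suspensions is nilpotent. Finally, $\Nil_1$ is closed under filtered colimits, since nilpotence is a condition witnessed element by element: any $x \in \mathrm{colim}\, M_\alpha$ comes from some $x_\alpha$, and $\Sq_0^k x_\alpha = 0$ implies $\Sq_0^k x = 0$. Combining these three closure properties yields one implication.

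For the ``only if'' direction I would write a nilpotent $M$ as the filtered colimit of its finitely generated submodules, each of which is again nilpotent (as a submodule of a nilpotent module). It then suffices to show that any finitely generated nilpotent module $N$ itself admits a finite filtration with suspension quotients. The nilpotent filtration $\{\nlp_s N\}$ recalled just before Theorem \ref{thm:general} does exactly this: since $N \in \Nil_1$ we have $\nlp_1 N = N$; by part~(3) of the finiteness proposition for finitely generated unstable modules the filtration terminates at a finite stage; and the successive quotients $\nlp_s N / \nlp_{s+1} N \cong \Sigma^s R_s N$ are suspensions by construction.

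The real content is imported from the structure results recalled earlier --- existence of the nilpotent filtration with suspension-of-reduced quotients, and its finiteness on finitely generated modules --- and the argument above is essentially a packaging of these two inputs into the stated equivalence. The main obstacle, were one to attempt a fully self-contained proof, would be to produce the suspension-by-suspension filtration on a finitely generated $\Nil_1$-module directly from $\Sq_0$-nilpotence, which is precisely the work that the structure theory of the $\nlp_s$ and the reduced functors $R_s$ already carries out for us.
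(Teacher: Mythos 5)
The paper states this proposition as a citation to \cite{S94} and gives no proof, so there is no in-paper argument to compare against; your proof has to be judged on its own merits and against the source.

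Your ``if'' direction is correct and essentially forced: suspensions are annihilated by $\Sq_0$ by instability, $\Nil_1$ is closed under extensions because $\Sq_0^{j+k}x = \Sq_0^{j}(\Sq_0^{k}x)$, and nilpotence is an element-by-element condition, hence passes to colimits (filtered colimits directly, and general colimits via direct sums and quotients).

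Your ``only if'' direction is also a valid deduction inside the framework of the paper, since you use only facts that are themselves recalled there: existence of the nilpotent filtration $\{\nlp_s N\}$ with $\nlp_s N/\nlp_{s+1}N \cong \Sigma^s R_s N$, and its finiteness on finitely generated modules (Proposition \ref{nilp}). You are candid that the ``real content'' is imported wholesale from that structure theory. The point worth flagging is that this runs the logic of \cite{S94} backwards: the filtration $\nlp_s$ and the functors $R_s$ are developed there \emph{after}, and in part on top of, this basic characterization of $\Nil_1$, so your route would be circular if one tried to make the whole chain self-contained. A direct argument that avoids the $\nlp_s$ entirely (say for $p=2$) takes a finitely generated nilpotent $N$ and considers the filtration $N \supset \Sq_0 N \supset \Sq_0^2 N \supset \cdots$ by images of the iterates of $\lambda_N \colon \Phi N \to N$. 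Since $\Phi$ carries a generating set of $N$ to one of $\Phi N$, the submodule $\Sq_0^{k}N$ is generated by the elements $\Sq_0^{k}x_i$ for a finite generating set $\{x_i\}$, so nilpotence forces $\Sq_0^{k}N=0$ for $k\gg 0$; and each subquotient $\Sq_0^{j}N/\Sq_0^{j+1}N$ is the cokernel of $\lambda$ on $\Sq_0^{j}N$, which is the suspension $\Sigma\Omega(\Sq_0^{j}N)$ by the exact sequence $\Phi M \to M \to \Sigma\Omega M \to 0$ of \cite[Section 1.7]{S94}. Your version is a clean repackaging once the nilpotent filtration is granted; the $\Sq_0$-filtration is what proves the statement from first principles and is closer to the argument in the cited source.
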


The full subcategory of nilpotent unstable modules is denoted $\Nil_1 \subset 
\U$ and an unstable module is said to be reduced if it contains no 
non-trivial subobject which lies in $\Nil_1$ (this is equivalent to containing
no non-trivial suspension). 

More generally  the category   $\U$ is filtered by  
 thick subcategories $\Nil_s$, $s \geq 0$, where $\Nil_s$ is the smallest thick 
subcategory
stable  under colimits and containing all $s$-fold suspensions:
$$
\U = \Nil_0 \supset \Nil_1 \supset \Nil_2 
\supset 
\ldots \supset
\Nil_s \supset \ldots ~.
$$
 
\begin{prop} 
\cite{S94,K95,K14}
The inclusion $\Nil_s \hookrightarrow \U$ admits a right adjoint 
$\nlp_s : \U \rightarrow \Nil_s$ 
so that    $M \in \U $ has a convergent decreasing filtration
$$
\ldots \subset \nlp_{s+1} M \subset \nlp_s M \subset \ldots \subset M
$$
and  $\nlp_s M_s/\nlp_{s+1} M  \cong \Sigma^s R_sM$,   where $R_s M$ is a 
reduced unstable module.
\end{prop}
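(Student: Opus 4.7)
I would split the proposition into three assertions and treat them in turn: existence of the right adjoint $\nlp_s$, convergence of the filtration, and identification of the associated graded pieces as suspensions of reduced modules.

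\emph{Right adjoint and convergence.} Since $\Nil_s$ is a thick subcategory of the Grothendieck abelian category $\U$ stable under colimits, I would construct $\nlp_s M$ directly as the internal union
$$
\nlp_s M := \sum_{N \hookrightarrow M,\; N \in \Nil_s} N
$$
of all submodules of $M$ lying in $\Nil_s$. Closure of $\Nil_s$ under colimits and subobjects ensures that $\nlp_s M$ is itself an object of $\Nil_s$, and it is manifestly the largest $\Nil_s$-submodule of $M$. For any $N' \in \Nil_s$, a morphism $N' \to M$ has image in $\Nil_s$ (closure under quotients), hence factors through $\nlp_s M$; this yields the adjunction isomorphism $\Hom_{\Nil_s}(N', \nlp_s M) \cong \Hom_\U(N', M)$ and functoriality of $\nlp_s$. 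For convergence, an $s$-fold suspension vanishes in degrees $<s$, and this property is preserved by arbitrary colimits in $\U$, so every module of $\Nil_s$, and in particular $\nlp_s M$, is concentrated in degrees $\geq s$. The filtration therefore terminates at zero in each fixed degree.

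\emph{Associated graded.} Set $N := \nlp_s M/\nlp_{s+1} M$. By maximality of $\nlp_{s+1}$ the module $N$ lies in $\Nil_s$ and has no nonzero submodule in $\Nil_{s+1}$. The claim to prove is that any such $N$ is isomorphic to $\Sigma^s R$ for some reduced unstable module $R$, which one then defines to be $R_s M$. The plan is to exploit the suspension functor $\Sigma^s\colon \U \to \U$, which factors through $\Nil_s$ and, by the structure theory of the nilpotent filtration, induces an equivalence of abelian categories $\Sigma^s\colon \U/\Nil_1 \xrightarrow{\sim} \Nil_s/\Nil_{s+1}$. Under this equivalence $N$ corresponds to a reduced object $R_s M \in \U/\Nil_1$, giving a morphism $f\colon \Sigma^s R_s M \to N$ that is an isomorphism in the Serre quotient. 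The kernel of $f$ is a submodule of $\Sigma^s R_s M$ lying in $\Nil_{s+1}$; reducedness of $R_s M$ prevents $\Sigma^s R_s M$ from having a nonzero $\Nil_{s+1}$-submodule (any such, desuspended $s$ times, would be a $\Nil_1$-submodule of $R_s M$), so $\ker f = 0$. The cokernel of $f$ is a quotient of $N$ lying in $\Nil_{s+1}$, and one must show it also vanishes.

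\emph{Main obstacle.} This last vanishing is the technical heart of the proof, since the hypothesis that $N$ has no $\Nil_{s+1}$-submodule does not, a priori, rule out $\Nil_{s+1}$-quotients; one has to exploit the origin of $N$ as the subquotient $\nlp_s M/\nlp_{s+1} M$ together with the explicit description of $\Nil_{s+1}$ inside $\Nil_s$ to conclude. A more hands-on route at $p=2$ is to show directly that on $N$ all the ``excess'' Steenrod operations $\Sq^i x$ with $i > |x|-s$ must vanish, because the $\A$-submodules they generate would otherwise lie in $\Nil_{s+1}$, contradicting $\Nil_{s+1}$-reducedness; the naive degreewise desuspension $(R_s M)^j := N^{j+s}$ then inherits a canonical unstable module structure with $\Sigma^s R_s M \cong N$, and reducedness of $R_s M$ is dual to the same contradiction. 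Either approach requires careful bookkeeping with the $\A$-action, and at odd primes the Bockstein together with the operator $\lambda_0$ replacing $\Sq_0$ further complicates matters; this is why the statement is attributed to \cite{S94,K95,K14} rather than reproved from scratch.
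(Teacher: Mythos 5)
The paper does not actually prove this proposition: it is quoted from \cite{S94,K95,K14}, and the only in-text remark is the single sentence that convergence follows because $\nlp_s M$ is $(s-1)$-connected for any $M$. Your construction of $\nlp_s M$ as the largest $\Nil_s$-submodule, the verification of the adjunction from closure of $\Nil_s$ under subobjects, quotients and colimits, and the convergence argument via concentration of $\Nil_s$ in degrees $\geq s$, are all correct and exactly match that remark.

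The gap you flag under ``Main obstacle'' is genuine, and neither of your two sketched routes closes it as written. In the first route, the equivalence $\Sigma^s\colon \U/\Nil_1 \xrightarrow{\sim} \Nil_s/\Nil_{s+1}$ only identifies the \emph{images} of $R_s M$ and $N$ in the Serre quotient; lifting to an honest morphism in $\U$ requires a choice, and the canonical one is the unit $N \to \Sigma^s\Omega^s N$ of the adjunction $\Omega^s\dashv\Sigma^s$, which points the opposite way to your $f\colon \Sigma^s R_s M \to N$. With that orientation the kernel is a $\Nil_{s+1}$-submodule of $N$ and hence vanishes, but the cokernel is a $\Nil_{s+1}$-quotient of $\Sigma^s\Omega^s N$, over which the hypothesis ``$N$ has no $\Nil_{s+1}$-submodule'' gives no control — the same problem simply reappears on the other side. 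Worse, the equivalence $\Nil_s/\Nil_{s+1}\simeq\U/\Nil_1$ is itself proved in \cite{S94} using precisely the structural fact you are trying to deduce from it, so the argument as outlined is close to circular. Your second, hands-on route at $p=2$ asserts without proof that for $x\in N$ with $i>|x|-s$ the submodule $\A\cdot\Sq^i x$ lies in $\Nil_{s+1}$; that is the entire content of the claim and does not follow from general nilpotence considerations. The missing ingredient, which really must be imported from \cite[\S 6.1]{S94} (or Kuhn's treatments), is the structural statement that an object of $\Nil_s$ with no nonzero $\Nil_{s+1}$-subobject is \emph{literally} an $s$-fold suspension of a reduced module; your proposal correctly isolates this as the heart of the matter but does not supply it.
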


The convergence statement is a consequence of the fact that, for any unstable 
module $M$,  $\nlp_s M $ is $(s-1)$-connected.

\begin{prop}
\cite{S94,K95,K14}
\label{nilp}
\ 
\begin{enumerate}
\item 
The $T$-functor restricts to $T \colon \Nil_s \rightarrow \Nil_s$ and $T 
\circ \nlp_s \cong \nlp_s \circ T$.
\item 
The tensor product restricts to  $ \otimes \colon \Nil_s \otimes \Nil_t 
\rightarrow 
\Nil_{s+t}$.
\label{nil}
 \item 
 For $M$  a finitely generated unstable module, the nilpotent 
filtration is finite ({\it i.e.} $\nlp_s M=0$ for $s\gg 0$) and each  $R_s M$ 
is finitely generated.
\item 
An unstable module $M$ lies in $\U_n$ if and only if $R_s M \in \U_n$ for all 
$s \in \N$.
\end{enumerate}
\end{prop}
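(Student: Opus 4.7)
The plan is to treat the four statements in turn, using Part (1) as the main input for Part (4); Parts (2) and (3) are largely independent.

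For (1), we exploit that $T$ is exact, commutes with all colimits, and satisfies $T(\Sigma M) \cong \Sigma T(M)$. Since $\Nil_s$ is the smallest thick subcategory of $\U$ stable under colimits and containing the $s$-fold suspensions, these three properties force $T(\Nil_s) \subseteq \Nil_s$. The natural monomorphism $T \nlp_s M \hookrightarrow \nlp_s T M$ is then automatic, as $T \nlp_s M$ is a submodule of $TM$ lying in $\Nil_s$. For the reverse inclusion, apply $T$ to the short exact sequence $0 \to \nlp_s M \to M \to M/\nlp_s M \to 0$; it suffices to prove $\nlp_s T(M/\nlp_s M) = 0$. Now $\nlp_s(M/\nlp_s M) = 0$, so $M/\nlp_s M$ admits a finite filtration whose graded pieces are $\Sigma^i R_i$ with $i < s$ and each $R_i$ reduced. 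Using that $T$ preserves reduced modules (a standard consequence of Lannes' theory), each $T \Sigma^i R_i = \Sigma^i T R_i$ is an $i$-fold suspension of a reduced module with $i < s$, hence has trivial $\nlp_s$; thickness of $\Nil_s$ then propagates this through the finite filtration via the elementary observation that $\nlp_s A = \nlp_s C = 0$ implies $\nlp_s B = 0$ for a short exact sequence $0 \to A \to B \to C \to 0$.

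For (2), biexactness and cocontinuity of $\otimes$ reduce the problem, via generating sets, to showing $\Sigma^s A \otimes \Sigma^t B \in \Nil_{s+t}$. The key identity $\Sq_0(x \otimes y) = \Sq_0(x) \otimes \Sq_0(y)$, which follows at once from the Cartan formula together with the instability relations, implies that $M \otimes N$ is nilpotent whenever $M$ is, settling the case $s \geq 1$, $t = 0$; iterating with respect to both factors and using the interaction of $\otimes$ with suspension yields the general bound. For (3), an earlier proposition places a finitely generated $M$ in some $\U_d$, and the combinatorial characterization of the Krull filtration of finite type modules (Theorem \ref{thm:combinat_krull}) constrains the Poincar\'e series of $M$, forcing $\nlp_s M = 0$ for $s$ sufficiently large and ensuring that each $R_s M$ is finitely generated, being extracted from a finite filtration whose terms are successively finitely generated.

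Finally, for (4), Theorem \ref{krull} identifies $\U_n$ with the kernel of $\bar T^{n+1}$. By Part (1), $\bar T$ commutes with each $\nlp_s$, so applying $\bar T^{n+1}$ to the nilpotent filtration of $M$ yields the nilpotent filtration of $\bar T^{n+1} M$, whose associated graded is $\bigoplus_{s} \Sigma^s \bar T^{n+1}(R_s M)$. If $M \in \U_n$, each graded piece vanishes, whence $R_s M \in \U_n$ for all $s$; conversely, if every $R_s M \in \U_n$, all graded pieces vanish, and convergence of the nilpotent filtration forces $\bar T^{n+1} M = 0$, giving $M \in \U_n$. The main obstacle throughout lies in the second half of Part (1): tracking how $T$, reducedness, and the nilpotent filtration interact. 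Once this is in place, (4) is a direct consequence, while (2) and (3) are standard structural arguments.
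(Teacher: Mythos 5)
Your overall plan matches what the paper intends (its own proof is a one-line appeal to the commutation of $T$ with suspension, plus a caveat about projective limits and noetherianness for part (4)), so the detailed fleshing-out of (1), (2), (4) is welcome and broadly sound. Two remarks on the details.

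First, for part (2), the route through the identity $\Sq_0(x\otimes y)=\Sq_0(x)\otimes\Sq_0(y)$ only directly controls $\Nil_1$, since for $s\geq 2$ the subcategory $\Nil_s$ is \emph{not} characterised by an iterated $\Sq_0$-nilpotence condition but is defined as the smallest thick subcategory stable under colimits containing $s$-fold suspensions. Your phrase ``iterating with respect to both factors'' papers over the real content. The clean argument is the one you almost state at the end: $\Sigma^s A\otimes\Sigma^t B\cong\Sigma^{s+t}(A\otimes B)$ lands in $\Nil_{s+t}$, and since $\otimes$ is biexact and commutes with colimits, this propagates from the generating suspensions to all of $\Nil_s\otimes\Nil_t$.

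Second, and more seriously, the argument you give for part (3) has a genuine gap. You invoke the Poincar\'e-series criterion (Theorem \ref{thm:combinat_krull}) and assert that it forces $\nlp_s M=0$ for $s\gg 0$. It does not: that criterion constrains which degrees $j$ can support nonzero elements (via the condition $\alpha(j-i)\leq n$ for some $0\leq i\leq k$), but it places no upper bound on degree and says nothing directly about nilpotent layers. For instance $\F2[x]=H^*B\Z/2$ is finitely generated, lies in $\U_1$, and is supported in all degrees; its nilpotent filtration is trivially finite only because it is reduced, not because of its Poincar\'e series. A correct argument for (3) goes via a finite presentation: a finitely generated $M$ is a quotient of a finite sum of free modules $F(n_i)$, each of which has finite nilpotent filtration by explicit computation, and quotients inherit this bound; finite generation of each $R_sM$ then uses local noetherianness of $\U$. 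Alternatively, one simply cites the proposition in Section \ref{Krull} (attributed to \cite[6.1.4]{S94} and \cite{K95}), which is exactly what the paper does. As written, your deduction from the Poincar\'e-series condition is not a proof.

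Parts (1) and (4) are handled correctly: the key points (that $T$ preserves $\Nil_s$, that $T$ preserves reduced modules so that $\nlp_s T(M/\nlp_s M)=0$ via the finite filtration, and that part (4) reduces to part (1) plus connectivity of $\nlp_s$ and convergence of the filtration) are exactly the ones needed, and your treatment of the convergence step in (4) implicitly addresses the projective-limit subtlety the paper flags.
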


\begin{proof}
 The result follows from the commutation of $T$ with suspension and the 
respective definitions.
For the final part, since $T$ does not commute with projective limits in 
general, in addition one uses the fact that the category $\U$ is locally 
noetherian \cite[Section 1.8]{S94} and the connectivity of objects of $\Nil_s$.
\end{proof}

\begin{nota}
 For $\varepsilon : K \ra \Fp$ an augmented unstable algebra, denote by $\bar K$ 
the augmentation ideal  $\ker \varepsilon 
$ and 
 $QK\in \U$ the module of indecomposables:
 $
 QK:= \bar K / (\bar K)^2.
 $
\end{nota}

\begin{prop}
\cite[Section 6.4]{S94}
For $K$ an unstable algebra, $QK \in \Nil_1$. If $p=2$, $QK$ is a suspension. 
\end{prop}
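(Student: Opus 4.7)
The plan is to apply the restriction axiom for unstable algebras directly, which forces the top Steenrod (or $p$th power) operation on any element to land in the decomposables.

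First, at $p=2$ I would work with the restriction axiom $\Sq^{|x|}x = x^2$ valid for every homogeneous $x \in K$. For $x \in \bar{K}$ this gives $\Sq_0 x = x^2 \in \bar{K}^2$, so $\Sq_0$ vanishes on the quotient $QK = \bar{K}/\bar{K}^2$. This is the only algebraic input needed.

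Second, I would invoke the standard characterization: an unstable $\mathcal{A}_2$-module $M$ is a suspension if and only if $\Sq_0$ acts as zero on $M$. The nontrivial direction is constructive: given $\Sq_0 M = 0$, define $N$ with $N^k := M^{k+1}$ and Steenrod operations inherited from $M$; the only potentially new instability relation required of $N$, namely $\Sq^{k+1}x = 0$ for $x \in M^{k+1}$, is precisely the hypothesis $\Sq_0 x = 0$, while all others follow from the instability of $M$. Applied to $M = QK$, this produces the desired $N$ with $\Sigma N \cong QK$, proving the second assertion. The first assertion at $p=2$ is then immediate, since any suspension lies in $\Nil_1$ by definition.

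For $p$ odd, only the first assertion is claimed, and the argument is the same in spirit. For $x \in \bar{K}$ of even degree $2n$, the restriction axiom gives $P^n x = x^p \in \bar{K}^p \subset \bar{K}^2$; for $x$ of odd degree, graded commutativity forces $x^2 = 0$ in $K$, hence $x^p = 0$, and the corresponding identity for the operation $\beta P^n$ on $QK$ holds modulo $\bar{K}^2$. Consequently the odd-prime analogue of $\Sq_0$ vanishes on $QK$, which places $QK$ in $\Nil_1$.

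The main (and only) obstacle is the characterization of suspensions by the vanishing of $\Sq_0$; this is standard and recorded in \cite[Chapter~1]{S94}, so the proof reduces to citing it after the one-line calculation $\Sq_0 x = x^2 \equiv 0 \pmod{\bar{K}^2}$.
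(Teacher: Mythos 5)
Your argument at $p=2$ is correct and is precisely the standard one which the paper references to \cite[Section 6.4]{S94}: the restriction axiom gives $\Sq_0 x = \Sq^{|x|}x = x^2$, which for $x \in \bar K$ lies in $\bar K^2$, so $\Sq_0$ vanishes on $QK = \bar K/\bar K^2$; combined with the characterization of suspensions as the unstable modules on which $\Sq_0 = 0$ (your desuspension construction is the content of \cite[Proposition 1.7.3]{S94}), this gives the result, and $\Nil_1$ contains all suspensions by definition.

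At $p$ odd, however, the step asserting that ``the corresponding identity for the operation $\beta P^n$ on $QK$ holds modulo $\bar K^2$'' is a genuine gap: the restriction axiom says nothing about $\beta P^n$ on odd-degree classes, and the assertion is false in general. For instance, take $K = H^*B\Z/p \cong E(u)\otimes \Fp[v]$ with $|u|=1$, $|v|=2$; then $\beta P^0 u = \beta u = v$, which is not in $\bar K^2$, and the class of $v$ is non-zero in $QK$. So the odd-primary analogue of $\Sq_0$ does \emph{not} vanish on $QK$; this is consistent with the proposition asserting that $QK$ is a suspension only at $p=2$. The conclusion $QK \in \Nil_1$ at odd $p$ is nonetheless correct, but one must iterate the operator: on even-degree classes $P_0 x = P^n x = x^p \in \bar K^p \subset \bar K^2$ vanishes in $QK$ as you say, while for $x$ of odd degree $P_0 x = \beta P^n x$ has even degree, whence $P_0^2 x = (\beta P^n x)^p \in \bar K^2$ vanishes in $QK$. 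Thus $P_0^2 = 0$ on $QK$, so $P_0$ acts locally nilpotently, which is the condition for membership in $\Nil_1$.
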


Recall (see \cite[Section 6.1]{S94}) that, for $s \in \N$, $\Omega^s : \U \ra 
\U$ is the left adjoint 
to the iterated suspension functor $\Sigma^s$ (hence is right exact) and 
restricts to a functor 
\[
 \Omega^s : \Nil_{k+s} \rightarrow \Nil_k 
\]
for $k \in \N$. The following  is applied in Section \ref{sect:EMSS}.

\begin{lem}
\label{lem:Omega_nil}
 For $M \in \Nil_s$  ($s \in \N$), the natural surjection $M \twoheadrightarrow 
\Sigma^s R_s M$ induces an isomorphism
 \[
  f (\Omega^s M) \stackrel{\cong}{\rightarrow} f (R_s M). 
 \]
If $N \subset M$ is a submodule such that $N \in \Nil_{s+1}$, then the 
surjection $N \twoheadrightarrow M/N$ induces an 
isomorphism $f (\Omega^s M) \stackrel{\cong}{\rightarrow} f(\Omega^s (M/N))$; 
in 
particular $f (\Omega^s (M/N)) \cong f(R_s M)$.
\end{lem}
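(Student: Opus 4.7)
For the first assertion, I would apply the right exact functor $\Omega^s$ to the short exact sequence
\[
0 \to \nlp_{s+1} M \to M \to \Sigma^s R_s M \to 0
\]
coming from the nilpotent filtration. Using the fact that the unit $R_s M \to \Omega^s \Sigma^s R_s M$ is an isomorphism (this is the standard identification $\Omega^s \Sigma^s \cong \id$ on $\U$), right exactness gives a short exact sequence
\[
0 \to K \to \Omega^s M \to R_s M \to 0,
\]
where $K$ is the image of $\Omega^s (\nlp_{s+1} M) \to \Omega^s M$. By the hypothesis $\nlp_{s+1} M \in \Nil_{s+1}$ and the fact (recalled in the excerpt) that $\Omega^s$ sends $\Nil_{s+1}$ into $\Nil_1$, the module $\Omega^s (\nlp_{s+1} M)$ lies in $\Nil_1$; since $\Nil_1$ is closed under quotients, so does $K$. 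Applying the exact functor $f$, which factors through $\U / \Nil_1$ and therefore vanishes on $\Nil_1$, one obtains $f(K) = 0$ and hence the desired isomorphism $f(\Omega^s M) \stackrel{\cong}{\to} f(R_s M)$.

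For the second assertion, I would run the same argument with the short exact sequence
\[
0 \to N \to M \to M/N \to 0.
\]
Applying right exact $\Omega^s$ and extracting the kernel $K'$ of $\Omega^s M \twoheadrightarrow \Omega^s(M/N)$, one sees that $K'$ is a quotient of $\Omega^s N$. Since $N \in \Nil_{s+1}$ by hypothesis, $\Omega^s N \in \Nil_1$, hence $K' \in \Nil_1$, and exactness of $f$ again yields $f(\Omega^s M) \cong f(\Omega^s(M/N))$. The concluding identification $f(\Omega^s(M/N)) \cong f(R_s M)$ is then obtained either by composing with the first part, or by applying the first part to $M/N$ (which still lies in $\Nil_s$ by thickness) after checking that $R_s(M/N) \cong R_s M$, an immediate consequence of $N \subset \nlp_{s+1} M$ being in $\Nil_{s+1}$.

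The only potentially delicate point is the bookkeeping around right exactness: $\Omega^s$ is not in general left exact, so one cannot directly apply it to get a four-term exact sequence. The key trick is to factor through the actual (set-theoretic) kernel $K$ (respectively $K'$) of the induced surjection and observe that it is a quotient of $\Omega^s$ applied to a module in $\Nil_{s+1}$, hence lies in $\Nil_1$; the exact functor $f$ then does the rest. No further subtlety with the indecomposables functor or with connectivity seems needed, since the conclusion only concerns the image in $\calF$.
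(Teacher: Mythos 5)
Your proof is correct and follows essentially the same route as the paper's: apply the right exact functor $\Omega^s$ to the short exact sequence $0 \to \nlp_{s+1} M \to M \to \Sigma^s R_s M \to 0$, observe that the resulting kernel is a quotient of $\Omega^s(\nlp_{s+1} M) \in \Nil_1$, and conclude by exactness of $f$ and its vanishing on $\Nil_1$; the second assertion is handled identically. (One minor slip: the natural isomorphism $\Omega^s \Sigma^s \cong \id$ comes from the \emph{counit} of the adjunction $\Omega^s \dashv \Sigma^s$, not the unit; this does not affect the argument.)
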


\begin{proof}
 By hypothesis, there is a short exact sequence of unstable modules:
 \[
  0 
  \ra
  \nlp_{s+1} M 
  \ra 
  M 
  \ra
  \Sigma^s R_s M 
  \ra 
  0
 \]
so that applying $\Omega^s$ gives the exact sequence:
\[
  \Omega^s \nlp_{s+1} M 
  \ra 
  \Omega ^s M 
  \ra
  R_s M 
  \ra 
  0,
 \]
where $ \Omega^s \nlp_{s+1} M \in \Nil_1$ (see \cite[Section 6.1]{S94}). 
Applying the exact functor $f$ gives the isomorphism $f (\Omega^s M)\cong f 
(R_s 
M)$.

The proof of the second statement is similar. 
\end{proof}

The following technical result is used  in the proof of Theorem \ref{thm:R2} in  
Section \ref{sect:EMSS}; for simplicity only 
the case $p=2$ is considered. 

\begin{prop}
\label{prop:restriction}
 Let $ \psi \colon M \rightarrow N$ be a morphism of unstable 
$\mathcal{A}_2$-modules such that 
 \begin{enumerate}
  \item 
  $M \in \Nil_d$ and $N \in \Nil_{d+1}$, for some $d \in \N$;
  \item 
  $R_d M $ is finitely generated.
 \end{enumerate}
Then there exists a finitely generated submodule $U \subset M$ such that
\begin{enumerate}
 \item 
 the restriction $\psi |_U$ is trivial;
 \item 
 the monomorphism $R_d \psi : R_d U \rightarrow R_d M$ has nilpotent cokernel 
(equivalently $f R_d \psi$ is an isomorphism). 
\end{enumerate}
\end{prop}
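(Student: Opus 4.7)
The plan is to set $K = \ker \psi$, first establish that the inclusion $K \hookrightarrow M$ induces a monomorphism $R_d K \hookrightarrow R_d M$ with cokernel in $\Nil_1$, and then extract from $K$ a finitely generated submodule realizing the same property at the level of $R_d$. The condition $\psi|_U = 0$ will then be automatic because $U$ is chosen inside $K$.

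For the first step, since $\nlp_s \colon \U \rightarrow \Nil_s$ is right adjoint to the inclusion $\Nil_s \hookrightarrow \U$, any submodule $L \subset M$ satisfies $\nlp_s L = L \cap \nlp_s M$; in particular $\nlp_s K = K \cap \nlp_s M$ for every $s$. The image $\psi(M)$ lies in $\Nil_{d+1}$ as a submodule of $N$, so the embedding $\nlp_d M / \nlp_d K \hookrightarrow M/K = \psi(M)$ forces $\nlp_d M / \nlp_d K \in \Nil_{d+1}$. Using $\nlp_{d+1} K = \nlp_d K \cap \nlp_{d+1} M$, the natural map $\nlp_d K / \nlp_{d+1} K \rightarrow \nlp_d M / \nlp_{d+1} M$ is injective with cokernel a quotient of $\nlp_d M / \nlp_d K$, hence itself in $\Nil_{d+1}$. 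Rewriting this as $\Sigma^d R_d K \hookrightarrow \Sigma^d R_d M$ with cokernel in $\Nil_{d+1}$ and desuspending (using that $\Sigma^d$ is exact and faithful, and $\Omega^d$ sends $\Nil_{d+1}$ into $\Nil_1$ with $\Omega^d \Sigma^d = \id$) yields a short exact sequence
$$
0 \rightarrow R_d K \rightarrow R_d M \rightarrow C \rightarrow 0
$$
in $\U$ with $C \in \Nil_1$.

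For the second step, choose $\A$-generators $x_1, \ldots, x_n$ of $R_d M$, which exist by hypothesis. Since $C$ is nilpotent, each class $[x_i] \in C$ satisfies $\Sq_0^{k_i} [x_i] = 0$ for some $k_i$, so $\Sq_0^{k_i} x_i$ lies in the image of $R_d K \hookrightarrow R_d M$; lift it to $y_i \in R_d K$ and then further to $z_i \in \nlp_d K$ via $\nlp_d K / \nlp_{d+1} K = \Sigma^d R_d K$. Set $U := \A \langle z_1, \ldots, z_n \rangle \subset K \subset M$; this is finitely generated and $\psi|_U = 0$ by construction. Because each $z_i$ lies in $\nlp_d M$ and $\nlp_d M$ is an $\A$-submodule, $U \subset \nlp_d M$ and hence $\nlp_d U = U$, so the image of $\Sigma^d R_d U = U / \nlp_{d+1} U$ inside $\Sigma^d R_d M$ is the $\A$-submodule generated by the classes of $\Sq_0^{k_i} x_i$. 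Therefore the cokernel of $R_d U \hookrightarrow R_d M$ is a quotient of $R_d M$ in which every generator $x_i$ becomes $\Sq_0$-nilpotent, so it lies in $\Nil_1$; equivalently, since $f$ is exact and kills $\Nil_1$, the induced map $f R_d U \rightarrow f R_d M$ is an isomorphism.

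The main obstacle is the filtration bookkeeping in the first step, where one must identify $\nlp_s K$, compute the cokernel of $\nlp_d K / \nlp_{d+1} K \hookrightarrow \nlp_d M / \nlp_{d+1} M$, and verify that the resulting $C$ actually lies in $\Nil_1$ rather than in some larger $\Nil_j$. Once this is in place, the second step is routine finite generation, since $R_d M$ has only finitely many generators by hypothesis.
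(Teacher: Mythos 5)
Your proof is correct, and it takes a genuinely different route from the paper's. The paper reduces to $M$ finitely generated and $\psi$ surjective, picks a linear section $\sigma_k$ of $M \twoheadrightarrow \Sigma^d R_d M$ over the Frobenius image $\Sigma^d \Phi^k R_d M \hookrightarrow \Sigma^d R_d M$, and then shows $\psi\sigma_k = 0$ for $k \gg 0$ by an explicit combinatorial argument: degrees in $N$ are constrained via Theorem \ref{thm:combinat_krull} (bounded $\alpha$-invariant), while degrees in $\Sigma^d\Phi^k R_d M$ are of the form $d + 2^k v$, and these sets become disjoint for large $k$. The submodule $U$ is then generated by $\sigma_k$ applied to finitely many generators. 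You instead work structurally with $K = \ker\psi$ from the start: the key observation that $\nlp_s L = L \cap \nlp_s M$ for $L \subset M$ (true since $\Nil_s$ is closed under subobjects, so $\nlp_s$ behaves like a hereditary torsion functor), together with $M/K \hookrightarrow N \in \Nil_{d+1}$ and desuspension via $\Omega^d : \Nil_{d+1} \to \Nil_1$, gives directly that $R_d K \hookrightarrow R_d M$ has cokernel $C \in \Nil_1$. The finite generation is then recovered by lifting $\Sq_0^{k_i} x_i$ (for generators $x_i$ of $R_d M$, using that $[x_i]$ is $\Sq_0$-nilpotent in $C$) back through the filtration; here $\psi|_U = 0$ is automatic since $U \subset K$. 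One point you use implicitly and should perhaps flag: the cokernel of $R_d U \hookrightarrow R_d M$ is nilpotent because a (finitely generated) unstable module whose $\A_2$-generators are all $\Sq_0$-nilpotent is itself nilpotent; this follows from the doubling endomorphism of $\A_2$ ($\Sq_0(\theta x) = \theta'\Sq_0(x)$, see \cite[Section 1.7]{S94}), but is not a formal triviality. Your argument avoids the paper's explicit degree bookkeeping and the reduction to $M$ finitely generated, at the cost of this structural input on $\nlp_s$ and the doubling map; the paper's argument, in exchange, supplies an explicit bound on $k$, which the authors note in passing. Both approaches yield the same description of $R_d U$ as a Frobenius-type submodule of $R_d M$ whose cokernel is nilpotent.
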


\begin{proof}
 It is straightforward to reduce to the case where $M$ is finitely generated. 
Moreover, without loss of generality, we may assume that $\psi$ is 
surjective.
 
 For $k \gg 0$ (an explicit bound can be supplied), consider the following 
diagram 
 \[
  \xymatrix{
  & 
  M 
  \ar@{->>}[r]^\psi
  \ar@{->>}[d]
  &
  N
  \\
  \Sigma^d \Phi^k R_d M 
  \ar@{-->}[ur]^{\sigma_k}
  \ar@{^(->}[r]
  &
  \Sigma^d R_d M 
  }
 \]
where $\Phi$ denotes the Frobenius functor (see \cite[Section 1.7]{S94}) and 
the 
bottom arrow is the 
canonical inclusion (recall that $R_dM$ is reduced). The dashed arrow 
denotes a choice of linear section $\sigma_k$ (not $\mathcal{A}_2$-linear in 
general).  

By hypothesis, $M$ is finitely generated, hence so is $N$; thus there 
exists $ h \in \N$ such that $R_s N=0$ if  $ s\not \in [d+1, h]$. Moreover, 
since each $R_sN$ is finitely generated, Theorem \ref{thm:combinat_krull} 
implies 
that $N$ is concentrated in degrees of the form $\ell + t$, $\ell \in [d+1, h]$ 
and $t \in \N$ such that $\alpha (t) \leq D$ for some $D \in \N$. 

Elements of   $\Sigma^d \Phi^k R_d M$ lie in degrees of the form $d + 2^k v$, 
for  $v \in \N$; hence a non-zero element in the image of $\psi \sigma_k$ 
lies in a degree  of the form:
\[
 d +2^kv = \ell + t,
\]
so that $ t = 2^k v - ( \ell - d)$, where $\alpha (t) \leq D$ and $\ell - d\in 
[1, h-d]$, where the values of $D$ and $h$ are independent of $k$.  If
$k$ is sufficiently large, this leads to a contradiction; thus, for $k \gg 0$, 
$\psi \sigma_k =0$.

Consider the submodule $U$ of $M$ generated by the image under 
$\sigma_k$ of a (finite dimensional) space of generators of the unstable module 
$ \Sigma^d \Phi^k R_d M $, which is finitely generated, since $R_dM$ is.

By construction, $\psi \sigma_k = 0$, hence $U \subset \ker \psi$. The functor 
$R_d$ preserves monomorphisms, hence $R_d U \subset R_d M$; 
moreover, from the 
 construction, it is clear that the cokernel  is nilpotent.
\end{proof}

\section{Kuhn's profile functions}
\label{sect:profile}

The profile function of an unstable module is defined using ideas of Kuhn
\cite{K95}:

\begin{defi}
For $M \in \U$, the profile function  $w_M \colon \N \ra \N \cup \{ \infty\}$ 
is defined by:
$$
w_M(i) : = \deg f(R_iM) = \deg R_i M.
$$
\end{defi}

\begin{rmq}
 For $M$ an unstable module, $w_M =0$ if and only if $M$ is locally finite
(i.e. 
$M \in \U_0$).
\end{rmq}

\begin{nota}
 For functions $f, g : \N \ra \N \cup \{ \infty\}$ write $f \leq g$ if 
$f(i)\leq g(i)$ for all $i \in \N$.
 The inclusion $\N \hookrightarrow \N \cup \{ \infty\}$ is denoted $\id$.
\end{nota}

The following operations on functions $\N \ra \N \cup \{ \infty\}$ are useful:

\begin{defi}
 For functions $f, g : \N \ra \N \cup \{ \infty\}$, define functions:
 \begin{enumerate}
  \item 
$f\bullet g (i) := \sup_k \{ f(k) + g (i-k)\}$; 
\item 
$f \circ g (i):=  \sup_{0<k<i} \{ f(k) + g (i-k)\}$ for $i>1$ and $:=0$ 
otherwise; 
  \item 
$\sup \{f,g\} (i) := \sup \{f(i), g(i) \}$ (likewise for 
arbitrary sets of functions);
  \item
$\partial f(i) := \sup \{0, f(i) -1  \}$.
\item
$[f](i):= \sup \{ f(j) | 0 \leq i \leq j \}$.
 \end{enumerate}
\end{defi}

The following states some evident properties:

\begin{lem} 
\label{lem:elem_functions}
For functions $f_i, g_i: \N \ra \N \cup \{ \infty\}$, $i\in \{1,2\}$ such that 
$f_i \leq g_i$:
 \begin{enumerate}
  \item 
  $[f_i ] \leq [g_i]$;
  \item 
  $\partial f_i \leq \partial g_i$;
  \item 
  $f_i \circ g_i \leq f_i \bullet g_i $;
  \item 
  $\sup \{f_1,g_1 \} \leq \sup \{f_2,g_2 \}$;
  \item 
  $f_1 \circ g_1 \leq f_2 \circ g_2$ and $f_1 \bullet g_1 \leq f_2 \bullet 
g_2$. 
 \end{enumerate}
\end{lem}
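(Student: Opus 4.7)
The plan is to verify each of the five statements by directly unwinding the definitions. Every clause reduces to two elementary principles about suprema in $\N \cup \{\infty\}$: (a) if $(a_\lambda)_\lambda$ and $(b_\lambda)_\lambda$ are families indexed by a common set with $a_\lambda \leq b_\lambda$ for all $\lambda$, then $\sup_\lambda a_\lambda \leq \sup_\lambda b_\lambda$; and (b) enlarging the indexing set of a supremum can only increase its value. Together with the monotonicity of addition in $\N \cup \{\infty\}$, these suffice.

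First I would dispatch (1), (2), (4) and (5), each of which is a one-line application of (a). For (1), $[f](n) = \sup_{j \geq n} f(j)$, so the inequality follows by pointwise comparison inside the sup. For (2), $f_i(n)-1 \leq g_i(n)-1$ pointwise, and taking the sup with $0$ preserves the inequality. For (4), $\sup\{a,b\} \leq \sup\{a',b'\}$ whenever $a \leq a'$ and $b \leq b'$, applied termwise. For (5), at each $k$ the summand satisfies $f_1(k) + g_1(i-k) \leq f_2(k) + g_2(i-k)$ by monotonicity of addition, and taking the sup over $k$ in the appropriate index set yields the result for both $\bullet$ and $\circ$.

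The one clause requiring a separate argument is (3), which compares $\circ$ and $\bullet$ applied to the \emph{same} pair of functions (and so does not even use the hypothesis $f_i \leq g_i$). Here I would read $f \bullet g(i) = \sup_k \{f(k) + g(i-k)\}$ as a sup over $k \in \N$ with $g(n) = 0$ for $n<0$, consistently with the first definition; the $\circ$-sup is then taken over the subset $\{k : 0 < k < i\}$ of the $\bullet$-index set, and (b) delivers the inequality for $i \geq 2$. In the degenerate range $i \in \{0,1\}$ the convention gives $f \circ g(i) = 0$, which is automatically dominated by $f \bullet g(i) \in \N \cup \{\infty\}$. The only real obstacle is this bit of bookkeeping about boundary conventions in the definition of $\bullet$; once it is settled, every clause is immediate.
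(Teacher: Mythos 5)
Your verification is correct and complete; the paper itself offers no proof, introducing the lemma only with ``The following states some evident properties,'' so the elementary clause-by-clause unwinding you give is exactly what is expected. One small remark: you read $[f](n)$ literally from the displayed definition as $\sup_{j\geq n} f(j)$, but the paper's subsequent uses — for instance the identity $[w_{M_2}](t) = \sup\{w_{M_2}(i) \mid 0\leq i\leq t\}$ in the proof of Proposition \ref{NS}, and the equality $[\id]=\id$ invoked in Corollary \ref{cor:wK_qK_Id}, which fails for the $\sup_{j\geq n}$ reading — show that the intended definition is the running maximum $[f](n)=\sup_{0\leq j\leq n} f(j)$; the inequality in the displayed definition is evidently a typo. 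This has no bearing on clause (1), which holds under either reading by the same pointwise comparison inside the supremum, so your proof stands unchanged. Your treatment of (3), pinning down the boundary convention so that the $\circ$-index set sits inside the $\bullet$-index set, is the right care to take and is the only place in the lemma where anything beyond the monotonicity of $\sup$ and of addition is needed.
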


\begin{prop}
\label{profile_properties}
For $M, N$ unstable modules, 
 \begin{enumerate}
\item 
 \label{convol}
 $w _{M \otimes N} \leq w_M \bullet w_N$ and, if $M, N$ are both nilpotent, $ w 
_{M \otimes N} \leq w_M \circ w_N$; 
\item 
$w_{\bar T M} = \partial w_M$.
\end{enumerate}
For a short exact sequence of unstable modules, $0 \rightarrow M_1 \rightarrow 
M_2 \rightarrow M_3 \rightarrow 0$, the following hold 
  \begin{enumerate}
  \item 
  $w_{M_1} \leq w_{M_2}$; 
  \item 
 $w_{M_2} \leq \sup \{ w_{M_1}, w_{M_3} \}$;
  \item 
$w_{M_3} \leq [w_{M_2}]$.  
  \end{enumerate}
\end{prop}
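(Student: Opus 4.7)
The plan is to handle the five inequalities separately, leaning on three core tools: exactness of $f\colon \U\to\calF$, left-exactness of $\nlp_s$ (as right adjoint to $\Nil_s\hookrightarrow\U$), and right-exactness of $\Omega^s$ (as left adjoint to $\Sigma^s$). The identification $f(\Omega^s\nlp_sM)\cong f(R_sM)$ from Lemma \ref{lem:Omega_nil} is the bridge that transfers Krull-degree computations into polynomial-degree computations in $\calF$, via Corollary \ref{poids}.

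For the tensor product, multiplicativity of the nilpotent filtration (Proposition \ref{nilp}(2)) gives $\nlp_iM\otimes\nlp_jN\subset\nlp_{i+j}(M\otimes N)$, so $\nlp_s(M\otimes N)$ carries a filtration whose subquotients factor through $R_iM\otimes R_jN$ with $i+j=s$. Corollary \ref{krull-t} bounds $\deg(R_iM\otimes R_jN)\leq \deg R_iM+\deg R_jN$, yielding $w_M\bullet w_N$; in the nilpotent case, $R_0M=R_0N=0$ forces $0<i<s$, giving $w_M\circ w_N$. For $\bar T$, exactness together with commutation with suspension forces commutation with $\nlp_s$, so $R_s\bar TM\cong \bar T R_sM$; Theorem \ref{krull} then gives $\deg \bar TN=\partial(\deg N)$ for reduced $N$.

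For the short-exact-sequence statements, part (\textit{i}) comes from $\nlp_{s+1}M_1=M_1\cap\nlp_{s+1}M_2$ (both being the maximal $\Nil_{s+1}$-submodule of $M_1$); the second isomorphism theorem gives an injection $R_sM_1\hookrightarrow R_sM_2$, and thickness of $\U_n$ yields the bound. For part (\textit{ii}), apply the left-exact $\nlp_s$ to get $0\to\nlp_sM_1\to\nlp_sM_2\to N_s\to 0$, where $N_s$ is the image in $\nlp_sM_3$; now apply the right-exact $\Omega^s$ and then $f$, identifying the outer terms through Lemma \ref{lem:Omega_nil}. This produces a right-exact $f(R_sM_1)\to f(R_sM_2)\to f(R_sN_s)\to 0$, and part (\textit{i}) applied to $N_s\hookrightarrow\nlp_sM_3$ in $\Nil_s$ gives $f(R_sN_s)\hookrightarrow f(R_sM_3)$. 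Thus $f(R_sM_2)$ is an extension of a subobject of $f(R_sM_3)$ by a quotient of $f(R_sM_1)$, and closure of $\calF_n$ under subquotients and extensions concludes.

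The main obstacle is part (\textit{iii}). I would consider the preimage $\tilde N:=\pi^{-1}(\nlp_sM_3)\subset M_2$, giving $0\to M_1\to\tilde N\to\nlp_sM_3\to 0$, and apply $f\circ\Omega^s$ to express $f(R_sM_3)$ as a quotient of $f(\Omega^s\tilde N)$. The nilpotent filtration of $\tilde N$ satisfies $\nlp_j\tilde N=\nlp_jM_2$ for $j\geq s$, so the sub $\nlp_s\tilde N=\nlp_sM_2$ contributes a piece of polynomial degree exactly $w_{M_2}(s)$. The subtle part is controlling the quotient $\tilde N/\nlp_sM_2\hookrightarrow M_2/\nlp_sM_2$, which has no $\Nil_s$-submodule. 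Here one uses that, for $j<s$, the layers $R_j\tilde N$ are identified through $f$ with $R_jM_1$: applying part (\textit{ii}) to $0\to M_1\to\tilde N\to\nlp_sM_3\to 0$ at level $j$, combined with the vanishing $f(\Omega^j\nlp_sM_3)=0$ (since $\nlp_sM_3\in\Nil_{j+1}$ is $j$-connected, so $(T_V\nlp_sM_3)^j=0$), forces $f(R_j\tilde N)\cong f(R_jM_1)$. The key final step, which is where the sharpness of the bound $[w_{M_2}]$ rather than the weaker $\sup_jw_{M_2}(j)$ emerges, is to show that after applying $\Omega^{s-j}$ to these low layers their polynomial degree collapses into the range $\sup_{j\geq s}w_{M_2}(j)$, exploiting the fact that $\Omega$ strictly decreases polynomial degree on reduced modules together with the compatibility $R_jM_1\hookrightarrow R_jM_2$ afforded by part (\textit{i}).
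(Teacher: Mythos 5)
Parts (1), (2), (\textit{i}) and (\textit{ii}) of your argument are essentially sound and track the paper's own reasoning: for the tensor product you re-derive (rather than cite) the content of Kuhn's Proposition 2.5 on $R_s(M\otimes N)$; for $\bar T$ you use the same commutation with $\nlp_s$; and for (\textit{i}), (\textit{ii}) your second-isomorphism-theorem and $\nlp_s/\Omega^s/f$ arguments amount to the left exactness of $f\circ R_s$ plus thickness of $\calF_n$, which is exactly the paper's one-line justification.

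The real problem is part (\textit{iii}). First, your route through $\tilde N=\pi^{-1}(\nlp_sM_3)$ and $\Omega^s$ is a detour that is never brought to a conclusion: the ``key final step'' is only gestured at, and the gestures are wrong. You say $\Omega$ ``strictly decreases polynomial degree on reduced modules,'' but $\Omega$ of a reduced module is zero (a reduced module has no nontrivial suspension submodule, so $\Sigma\Omega R\hookrightarrow R$ forces $\Omega R=0$), so the low layers $\Sigma^jR_j\tilde N$ do not ``decay gradually'' under $\Omega^{s-j}$ — they vanish, and the residual contribution to $\Omega^s\tilde N$ has to be controlled by right-exactness, which you never do. Worse, you claim the low layers' degree ``collapses into the range $\sup_{j\geq s}w_{M_2}(j)$,'' which is the wrong inequality: the bound to be proved, $w_{M_3}(s)\leq [w_{M_2}](s)$, is $\sup_{j\leq s}w_{M_2}(j)$, not $\sup_{j\geq s}$. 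As written, the argument does not establish (\textit{iii}). The paper's proof is far shorter and avoids $\Omega$ entirely: since $\nlp_{s+1}M_2$ maps into $\nlp_{s+1}M_3$, the module $\Sigma^sR_sM_3=\nlp_sM_3/\nlp_{s+1}M_3$ is a subquotient of $M_2/\nlp_{s+1}M_2$, and the latter has a finite filtration with layers $\Sigma^iR_iM_2$ for $0\leq i\leq s$, each in $\U_{[w_{M_2}](s)}$; thickness of $\U_{[w_{M_2}](s)}$ then gives $R_sM_3\in\U_{[w_{M_2}](s)}$, i.e.\ $w_{M_3}(s)\leq [w_{M_2}](s)$.
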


\begin{proof}
 For tensor products, the  statement holds by the behaviour of the functor 
$R_i$ with respect to tensor products \cite[Proposition 2.5]{K95}. When $M$ and 
$N$ are both nilpotent, $R_0M= 0 = R_0 N$, so that 
the terms $R_0 M \otimes R_k N $ and $R_k M \otimes R_0N$ do not contribute.

The statement for the reduced $T$-functor is a consequence of the compatibility 
of 
$T$ with the nilpotent filtration and the definition of polynomial degree (see
Proposition \ref{nilp} and \cite{S94,K95}).
 
For the short exact sequence,  
the first two properties follow from the left exactness of the  composite 
functor $f 
\circ R_i$ \cite[Corollary 3.2]{K07} and the fact that $\calF_n$ 
is thick. 

For the final point, it suffices to show that $\Sigma^t R_t M_3$ lies in $\U_d$,
where
 $$d= [w_{M_2}](t) = \sup \{ w_{M_2}(i) |0 \leq i \leq t \}.$$
 Now $\Sigma ^t R_t M_3$ is a subquotient 
of $M_2/ \nlp_{t+1} M_2$ and the latter  lies in $\U_d$, since each $\Sigma^i 
R_i M_2$ does, for $0 \leq i \leq t$, by definition of $d$.  
\end{proof}

\begin{rmq}
 The final statement on $w_{M_3}$ can be strengthened slightly; the current 
presentation is sufficient for current purposes.
\end{rmq}

\begin{exam}
Let $M, N$ be unstable modules such that  $w_M \leq \id$ and $w_N \leq \id$, 
then $w_{M \otimes N} \leq \id$. 
Hence, if ${\mathbb{T}(M)}:= \bigoplus_i M^{\otimes i} $ denotes  the tensor 
algebra on $M$, 
$w_{\mathbb{T}(M)} \leq \id$. 
\end{exam}

For $K$ an unstable algebra, Proposition \ref{profile_properties} immediately 
provides a  bound for $w_{QK}$ in terms of $w_K$; the following provides a 
converse.

\begin{prop}
\label{NS} 
For $K$ a connected unstable algebra such that  $\bar K \in \Nil_1$, 
\[
 w_K \leq \sup \{ [w_{QK}] ^{\circ t} | t \in \N \}.
\]
\end{prop}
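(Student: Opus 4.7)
The plan is to exploit the augmentation ideal filtration
\[
 \bar K = \bar K^1 \supset \bar K^2 \supset \bar K^3 \supset \cdots ,
\]
which is convergent because $K$ is connected (so $\bar K^t \subset K^{\geq t}$, and hence the filtration stabilizes in each internal degree). The associated graded module is $\bigoplus_{t \geq 1} \bar K^t/\bar K^{t+1}$, and for each $t \geq 1$ iterated multiplication in $K$ induces a canonical surjection of unstable modules $(QK)^{\otimes t} \twoheadrightarrow \bar K^t/\bar K^{t+1}$.

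First I would establish the intermediate bound $w_K \leq \sup_{t \geq 1} w_{\bar K^t/\bar K^{t+1}}$. Iterating the short-exact-sequence bound $w_{M_2} \leq \sup\{w_{M_1}, w_{M_3}\}$ of Proposition \ref{profile_properties} along the filtration yields, for each $N \geq 1$,
\[
 w_{\bar K} \leq \sup \bigl( \{w_{\bar K^{N+1}}\} \cup \{w_{\bar K^t/\bar K^{t+1}}\}_{1 \leq t \leq N} \bigr),
\]
and letting $N \to \infty$, using the convergence of the filtration, gives the desired bound (noting that $w_K = w_{\bar K}$ since $K$ is connected).

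Next, by property (6) of Proposition \ref{profile_properties} applied to the surjection $(QK)^{\otimes t} \twoheadrightarrow \bar K^t/\bar K^{t+1}$, followed by iterated application of the nilpotent tensor bound (Proposition \ref{profile_properties}(1), using that $QK \in \Nil_1$), one obtains
\[
 w_{\bar K^t/\bar K^{t+1}} \leq [w_{(QK)^{\otimes t}}] \leq [w_{QK}^{\circ t}].
\]

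Finally, the inequality $[w_{QK}^{\circ t}] \leq [w_{QK}]^{\circ t}$ is a formal manipulation using Lemma \ref{lem:elem_functions}: from $w_{QK} \leq [w_{QK}]$ one deduces $w_{QK}^{\circ t} \leq [w_{QK}]^{\circ t}$ by monotonicity of $\circ$, and a direct check shows that the $\circ$-composition of non-increasing functions is non-increasing, so $[w_{QK}]^{\circ t}$ is stable under $[\cdot]$. Combining the three steps gives the stated bound. The main obstacle is the passage to the limit in the first step: one must verify that the term $w_{\bar K^{N+1}}$ does not spoil the supremum as $N \to \infty$, which requires a careful analysis (degree-by-degree, using that $\bar K^{N+1}$ is $N$-connected) since the profile function is not \emph{a priori} continuous under inverse limits.
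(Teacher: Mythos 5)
Your approach is genuinely different from the paper's. The paper works only with the single short exact sequence $0 \to (\bar K)^2 \to \bar K \to QK \to 0$ together with the surjection $\bar K \otimes \bar K \twoheadrightarrow (\bar K)^2$; this produces the \emph{recursive} inequality $[w_{\bar K}] \leq \sup\{[w_{QK}], [w_{\bar K}] \circ [w_{\bar K}]\}$, which is then closed by induction on the argument $i$ of the profile function. You instead unroll the entire $\bar K$-adic filtration $\bar K \supset \bar K^2 \supset \cdots$ and estimate each subquotient $\bar K^t/\bar K^{t+1}$ directly via the surjection $(QK)^{\otimes t} \twoheadrightarrow \bar K^t/\bar K^{t+1}$. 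Your route is conceptually more transparent (it makes the source of the $t$-fold $\circ$-convolution visible) at the cost of a passage to the limit; the paper's route avoids the limit but hides the combinatorics inside the induction.

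On the convergence step that you rightly flag as the main obstacle: the remedy you sketch, using only that $\bar K^{N+1}$ is $N$-connected, does not work. Connectivity of an unstable module gives no control over its profile function --- the free module $F(N+1)$ is $N$-connected, reduced, and has $w_{F(N+1)}(0) = N+1$, which tends to $\infty$ with $N$. The fact you actually need is the stronger statement that $(\bar K)^{N+1} \in \Nil_{N+1}$: this holds because $(\bar K)^{N+1}$ is a quotient of $\bar K^{\otimes (N+1)}$, which lies in $\Nil_{N+1}$ by Proposition~\ref{nilp} (using $\bar K \in \Nil_1$), and $\Nil_{N+1}$ is thick. Consequently $R_i(\bar K)^{N+1} = 0$, so $w_{(\bar K)^{N+1}}(i) = 0$, for every $i \leq N$, and the stray term drops out of the supremum once $N \geq i$. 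With this correction (and noting that $[\cdot]$ should produce \emph{non-decreasing} functions, $[f](i) = \sup_{j\leq i} f(j)$, as is used in the paper's proof of Proposition~\ref{profile_properties} --- with the literal definition in the text the intermediate monotonicity claim you invoke is false), your argument is sound.
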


\begin{proof}
Since $K \cong \bar K \oplus \Fp$ as unstable modules, $w_K = w_{\bar K}$, 
hence 
it suffices to consider the latter.

 The result follows from an analysis of the short exact sequence 
 \[
  0 
  \rightarrow 
  (\bar K)^2 
  \rightarrow 
  \bar K 
  \rightarrow 
  QK 
  \rightarrow 
  0
 \]
together with the surjection $\bar K \otimes \bar K \twoheadrightarrow (\bar 
K)^2$. Proposition \ref{profile_properties} provides the  inequalities
$w_{(\bar 
K)^2} \leq [w_{\bar K \otimes \bar K} ] \leq [w_{\bar K}] \circ [w_{\bar K}
]$, so that
$
 w_{\bar K} \leq \sup \{ w_{QK}, [w_{\bar K}] \circ [w_{\bar K} ] \},
$
 since $\bar K$ is nilpotent. It follows from Lemma \ref{lem:elem_functions} 
that
\begin{eqnarray}
\label{eqn:inequality_w}
[w_{\bar K}] \leq  \sup \{ [w_{QK}], [w_{\bar K}] \circ [w_{\bar K} ]
\}.
\end{eqnarray}

The result follows by induction on $i$ showing that 
\[
 [w_{\bar K}] (i) \leq \sup_t \{ [w_{QK}]^{\circ t} (i) \}.
\]
 The cases $i\in \{0, 1 \}$ are clear and the inductive step uses 
(\ref{eqn:inequality_w}).
\end{proof}

The following is
clear:

\begin{cor}
\label{cor:wK0}
For  $K$ a connected  unstable algebra such that  $\bar K \in \Nil_1$,
$\bar K$ is locally finite if and only if $QK$ is locally finite; equivalently
$w_K=0$ if and only if $w_{QK}=0$.
\end{cor}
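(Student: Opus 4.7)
The plan is to reduce both directions of the equivalence to the two propositions just established, using the remark that an unstable module $M$ is locally finite precisely when $w_M = 0$. Since $K \cong \bar K \oplus \Fp$ as unstable modules, one has $w_K = w_{\bar K}$, so it suffices to show $w_{\bar K} = 0 \iff w_{QK} = 0$.

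For the forward implication, I would invoke the short exact sequence
\[
0 \rightarrow (\bar K)^2 \rightarrow \bar K \rightarrow QK \rightarrow 0
\]
and apply the third inequality for short exact sequences in Proposition \ref{profile_properties}, which yields $w_{QK} \leq [w_{\bar K}]$. If $w_{\bar K} = 0$, then $[w_{\bar K}] = 0$, and hence $w_{QK} = 0$.

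For the reverse implication, which is the more substantive direction, I would apply Proposition \ref{NS} directly: the hypothesis that $\bar K \in \Nil_1$ gives
\[
w_{\bar K} \leq \sup \{ [w_{QK}]^{\circ t} \mid t \in \N \}.
\]
If $w_{QK} = 0$, then $[w_{QK}] = 0$, so by the monotonicity properties recorded in Lemma \ref{lem:elem_functions} each iterate $[w_{QK}]^{\circ t}$ vanishes, and the supremum is zero; thus $w_{\bar K} = 0$.

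There is no real obstacle here: the entire content of the corollary is already packaged into Propositions \ref{profile_properties} and \ref{NS}, and the proof amounts to observing that the single chain of inequalities produced there propagates the vanishing of the profile function in both directions once one passes through the decomposition $K \cong \bar K \oplus \Fp$ and invokes the remark identifying $w_M = 0$ with local finiteness.
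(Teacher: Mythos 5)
Your proof is correct and follows essentially the route the paper intends: the paper simply declares the corollary ``clear'' after Proposition \ref{NS}, and you supply the easy direction via the quotient inequality $w_{QK}\leq [w_{\bar K}]$ from Proposition \ref{profile_properties} (equivalently, that $\U_0$ is closed under quotients) and the substantive direction via Proposition \ref{NS} together with the decomposition $K\cong \bar K\oplus\Fp$.
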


\begin{cor}
\label{cor:wK_qK_Id} 
For $K$ a connected unstable algebra such that  $\bar K \in \Nil_1$,  
$w_K \leq \id$ if and only if $w_{QK} \leq \id$.
\end{cor}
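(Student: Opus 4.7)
The plan is to derive both implications from the infrastructure assembled in this section, namely Propositions \ref{profile_properties} and \ref{NS} together with the elementary manipulations of Lemma \ref{lem:elem_functions}. Since $K \cong \Fp \oplus \bar K$ as unstable modules, $w_K = w_{\bar K}$, so in both directions it is equivalent to control $w_{\bar K}$ in terms of $w_{QK}$.

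For the forward direction, I would assume $w_K \leq \id$ and apply the third of the short exact sequence inequalities of Proposition \ref{profile_properties} to
$$
0 \to (\bar K)^2 \to \bar K \to QK \to 0,
$$
which yields $w_{QK} \leq [w_{\bar K}]$. Since $\id$ is non-decreasing, $[\id] = \id$, and monotonicity of the operation $[-]$ recorded in Lemma \ref{lem:elem_functions} immediately gives $w_{QK} \leq [w_{\bar K}] \leq [\id] = \id$.

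For the reverse direction, assume $w_{QK} \leq \id$. Proposition \ref{NS} bounds $w_K \leq \sup_t [w_{QK}]^{\circ t}$, so it suffices to show by induction on $t \geq 1$ that $[w_{QK}]^{\circ t} \leq \id$. The base case $[w_{QK}] \leq [\id] = \id$ is the monotonicity argument above. For the inductive step, the key observation is that $f \circ g \leq \id$ whenever $f, g \leq \id$: for $i \leq 1$ the value is $0$ by definition, and for $i \geq 2$
$$
f \circ g (i) \;=\; \sup_{0 < k < i}\{f(k)+g(i-k)\} \;\leq\; \sup_{0 < k < i}\{k+(i-k)\} \;=\; i.
$$
Combined with the monotonicity of $\circ$ in Lemma \ref{lem:elem_functions}, this closes the induction.

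Once Propositions \ref{profile_properties} and \ref{NS} are in hand, there is really no obstacle beyond the combinatorial bound above. The argument parallels Corollary \ref{cor:wK0}, with the zero function replaced throughout by $\id$; this substitution works precisely because $\id$ is non-decreasing (so $[\id]=\id$) and is absorbing under $\circ$ in the required range.
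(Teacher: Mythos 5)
Your proposal is correct and follows essentially the same route as the paper's own (very terse) proof: the forward direction via the short exact sequence $0 \to (\bar K)^2 \to \bar K \to QK \to 0$ and the monotonicity of $[-]$ with $[\id]=\id$, and the reverse direction via Proposition \ref{NS} together with the fact that functions bounded by $\id$ are stable under the convolution-type operation (you verify $\id\circ\id\le\id$ directly where the paper cites $\id\bullet\id=\id$, and since $\circ\le\bullet$ on nilpotent profile functions this is the same observation). Your version simply makes the induction over $t$ explicit, which the paper leaves to the reader.
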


\begin{proof}
 The hypothesis $w_K \leq \id$ implies that $[w_K] \leq [\id] = \id$. 
The result follows from the observation that $\ \id \bullet \id = 
\id$. 
\end{proof}

The profile function of an unstable  module $M$ gives information about the 
connectivity of the iterates $\bar T ^n M$ of the 
reduced $T$-functor applied to $M$. The following results are used in the proof 
of Theorem \ref{thm:general} in Section \ref{sect:proofs_GNS_GS}.

\begin{lem}
 \label{lem:profile_T_connectivity}
 For $M$ an unstable module such that $w_M \leq \sup \{ \id + d, 0 \}$ ($d \in 
\Z$) and $0<n \in \N$, $\bar T^n M $ is $(n-d -1)$-connected.
\end{lem}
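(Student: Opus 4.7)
The plan is to prove the stronger assertion that $\bar T^n M \in \Nil_{n-d}$, from which the claim follows at once, since every object of $\Nil_{n-d}$ is $(n-d-1)$-connected (as noted after the description of the nilpotent filtration in Section \ref{Nil}). By the definition of the nilpotent filtration via its reduced quotients, $\bar T^n M \in \Nil_{n-d}$ is equivalent to $R_s(\bar T^n M) = 0$ for every $0 \leq s < n - d$, since each such vanishing yields $\nlp_s \bar T^n M = \nlp_{s+1} \bar T^n M$, collapsing one step of the filtration.

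The key technical ingredient is the commutation between $\bar T$ and the nilpotent filtration. From Proposition \ref{nilp}(1) one has $T \circ \nlp_s \cong \nlp_s \circ T$; since $T \cong \id \oplus \bar T$ and $\nlp_s$ commutes with direct sums, this restricts to $\bar T \circ \nlp_s \cong \nlp_s \circ \bar T$. Applying this at stages $s$ and $s+1$ and using the exactness of $\bar T$ yields a natural isomorphism $R_s(\bar T M) \cong \bar T(R_s M)$, and iterating gives $R_s(\bar T^n M) \cong \bar T^n(R_s M)$. The hypothesis translates to $\deg R_s M \leq \max\{s + d, 0\}$, so $R_s M \in \U_{\max\{s + d, 0\}}$. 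Theorem \ref{krull} then asserts that $\bar T^{k+1}$ annihilates $\U_k$, and a short check shows $n \geq \max\{s + d, 0\} + 1$ whenever $s < n - d$ and $n \geq 1$. Hence $\bar T^n(R_s M) = 0$, completing the proof.

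The main pitfall would be to argue purely from the profile function. A direct computation via Proposition \ref{profile_properties} gives $w_{\bar T^n M} = \partial^n w_M \leq \sup\{\id + d - n, 0\}$, so that $w_{\bar T^n M}(s) = 0$ for $s < n - d$. This, however, only places $R_s(\bar T^n M)$ in $\U_0$, not that it vanishes. The commutation identity is precisely what lifts the former conclusion to the latter, by re-expressing $R_s(\bar T^n M)$ as $n$ iterates of $\bar T$ applied to the \emph{reduced} module $R_s M$ of bounded Krull degree, which the Krull filtration estimate forces to vanish outright once $n$ exceeds that degree.
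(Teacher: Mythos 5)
Your proof is correct and follows essentially the same route as the paper's: you extract $R_sM\in\U_{\max\{s+d,0\}}$ from the hypothesis, kill $\bar T^n(R_sM)$ for $s<n-d$ via Theorem~\ref{krull}, pass through the commutation of $\bar T$ with the nilpotent filtration to conclude $\bar T^nM\in\Nil_{n-d}$, and read off the connectivity. The paper leaves the identification $R_s(\bar T^nM)\cong\bar T^n(R_sM)$ implicit under the phrase ``compatibility of the $T$-functor with the nilpotent filtration''; your version spells it out, which is a minor expository refinement rather than a different argument.
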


\begin{proof}
 By definition of the profile function and the hypothesis, 
 $R_i M \in \U_{\sup \{\id +d, 0 \}}$. Hence, by Theorem \ref{krull}, $\bar T^n 
R_i M =0$ for $n > i +d$, with the first non-trivial value for $i=n-d$ (here it 
is essential that $n>0$). 
 By compatibility of the action of the $T$-functor with the nilpotent 
filtration 
(see Proposition \ref{nilp}), it follows that 
 \[
 \bar T ^n M = \nlp_{n-d}    \bar T ^n M.
  \]
This implies, in particular, that $\bar T^n M$ is $(n-d-1)$-connected.
\end{proof}

This result can be applied in the study of unstable modules $M$ for which $w_M 
- 
\id$ is bounded:

\begin{lem}
 \label{lem:w_M-id-bounded}
 Let $M \in \U$ be an unstable module such that $w_M - \id$ is bounded and set 
 \begin{eqnarray*}
  d&:=& \sup \{ w_M(i) -i \} 
  \\
  s&:=& \inf \{ i | w_M (i) -i = d \}.
 \end{eqnarray*}
 If $d>0$, then $\bar T^{s+d-1} M \in \Nil_s$  (equivalently $R_i \bar T^{s+d-1} 
M =0 $ for 
$i<s$) and 
\[
 R_i  \bar T^{s+d-1} M 
 \in 
 \left\{ 
 \begin{array}{ll}
  \U_1 \backslash \U_0 & i=s \\
  \U_{i-s+1} & i>s.
 \end{array}
 \right.
\]
In particular, $w_{\bar T^{s+d-1} M} - \id$ is bounded above by $1-s$ and, for 
$0<n\in \N$, 
$
 \bar T^{n+s+d-1} M \in \Nil_{n+s-1},
$ 
thus $ \bar T^{n+s+d-1} M $ is $(n+s-2)$-connected.
\end{lem}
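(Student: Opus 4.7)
Plan. The central technical fact is that the reduced functor $\bar T$ commutes, up to natural isomorphism, with the reduced associated graded functors $R_i$. Combining Proposition \ref{nilp}(1) ($T\circ \nlp_s \cong \nlp_s\circ T$) with the splitting $T\cong \id \oplus \bar T$ yields $\bar T \nlp_s M \cong \nlp_s \bar T M$, and the commutation $T\Sigma \cong \Sigma T$ then gives $\bar T R_s M \cong R_s \bar T M$; the same reasoning shows $\bar T$ preserves reducedness. Under the exact embedding $f\colon \U/\Nil_1 \hookrightarrow \calF$, $\bar T$ corresponds to the difference functor $\Delta$, which decreases polynomial degree by exactly one. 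Thus, using Corollary \ref{poids}, $f(R_i \bar T^n M) \cong \Delta^n f(R_i M)$ has polynomial degree $w_M(i) - n$, understood as meaning the functor vanishes once this quantity becomes negative.

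Specializing to $n = s+d-1$, I would compute $R_i \bar T^n M$ in three cases. For $i<s$, the minimality of $s$ yields $w_M(i) \leq i+d-1$, so $w_M(i)-n \leq i-s < 0$ and $R_i \bar T^n M = 0$; this already establishes $\bar T^{s+d-1} M \in \Nil_s$. For $i=s$, $R_s M$ is reduced of polynomial degree exactly $s+d$, so $\Delta^n f(R_sM)$ has polynomial degree exactly $1$, whence $R_s \bar T^{s+d-1} M \in \U_1\setminus \U_0$ by Corollary \ref{poids}. For $i>s$, the bound $w_M(i) \leq i+d$ yields polynomial degree $\leq i-s+1$, giving $R_i \bar T^{s+d-1} M \in \U_{i-s+1}$. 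The estimate $w_{\bar T^{s+d-1}M} - \id \leq 1-s$ is immediate from these three bullets.

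For the final assertions, set $M':=\bar T^{s+d-1}M$ and iterate the commutation: $R_i \bar T^{n+s+d-1} M \cong \bar T^n R_i M'$. The Krull bounds on each $R_i M'$ (zero for $i<s$, in $\U_1$ for $i=s$, and in $\U_{i-s+1}$ for $i>s$) combined with Theorem \ref{krull} force $\bar T^n R_i M'$ to vanish for all $i \leq n+s-2$. Together with the vanishing for $i<s$ already in hand, this proves $\bar T^{n+s+d-1}M \in \Nil_{n+s-1}$, and the $(n+s-2)$-connectivity is the standard connectivity statement for objects of $\Nil_t$, recorded in Section \ref{Nil}.

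The main obstacle is the bookkeeping around the commutation $R_i \bar T \cong \bar T R_i$ together with its compatibility with reducedness; once these foundations are nailed down, every remaining step is a direct polynomial-degree calculation in $\calF$, converted back into a Krull-degree statement via Corollary \ref{poids}. The hypothesis $d>0$ is used exactly to ensure that $n=s+d-1 \geq s$, which is what allows $\bar T^n$ to kill the lower part of the nilpotent filtration of $M$ while leaving a nontrivial $R_s$ component.
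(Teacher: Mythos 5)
Your proof is correct and follows essentially the same route as the paper: it rests on the identity $w_{\bar T M} = \partial w_M$ from Proposition \ref{profile_properties} (which you rightly trace back to the commutation $R_i \bar T \cong \bar T R_i$ via Proposition \ref{nilp} and the embedding $f$), supplemented by the explicit vanishing check $R_i \bar T^{s+d-1}M = 0$ for $i<s$, and then deduces the connectivity statement exactly as in Lemma \ref{lem:profile_T_connectivity}. You simply unpack the commutation and the degree arithmetic in $\calF$ more explicitly than the paper does, which is a reasonable elaboration rather than a different argument.
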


\begin{proof}
 The first statement follows from the relationship between $\bar T$ and 
$\partial$ given in Proposition \ref{profile_properties},
  in addition checking the vanishing of  $R_i \bar T^{s+d-1} M$ for $i<s$. The 
second part follows by applying Lemma \ref{lem:profile_T_connectivity}. 
\end{proof}

\section{Proof of Theorem \ref{thm:general}}
\label{sect:proofs_GNS_GS}

We commence by a rapid review of Lannes' theory, which is the main ingredient 
in the proof of Theorem \ref{thm:general} and is also the reason for the
restrictions 
imposed on the topological spaces considered. 

Let  $X$ be a $p$-complete, $1$-connected space and  assume that $TH^*X$ is of 
finite type  and $1$-connected.  The evaluation
map
  $$
  B\Z/p \times \map (B\Z/p, X) \ra X
  $$
 induces a map in cohomology
$
H^*X \ra H^* B\Z/p \otimes H^* \map(B\Z/p,X)
$
and hence, by adjunction, 
$
TH^*X \ra H^* \map (B\Z/p,X)
$.

\begin{thm}  
\cite{La92} \label{Lannes2}
Under the above hypotheses, the natural map
$
T H^*X \to H^* \map (B\Z/p,X)
$
is an isomorphism of unstable algebras.
\end{thm}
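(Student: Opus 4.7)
The plan is to reduce to the case of Eilenberg--MacLane spaces by induction along a Postnikov tower of $X$, exploiting the exactness and tensor compatibility of the $T$-functor recorded in Section \ref{Nil}. First, the natural map in the statement is obtained by adjunction: the evaluation $B\Z/p \times \map(B\Z/p,X) \to X$ induces $H^*X \to H^*B\Z/p \otimes H^*\map(B\Z/p,X)$ via the K\"unneth isomorphism, and the adjunction defining $T$ yields $TH^*X \to H^*\map(B\Z/p,X)$, which is multiplicative because $T$ is compatible with tensor products. The K\"unneth step is available because the finite type hypothesis on $T_VH^*X$ is engineered to force $H^*\map(B\Z/p,X)$ to have finite type as well.

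Next, I would verify the theorem directly for $X=K(\Fp,n)$. Here $\map(B\Z/p,K(\Fp,n))$ splits, up to homotopy, as a product of Eilenberg--MacLane spaces indexed by a basis of $H^{\leq n}B\Z/p$, while $TH^*K(\Fp,n)$ is computable from the adjunction together with the description of $H^*K(\Fp,n)$ as a free object. The two sides match by direct comparison. The case of finite products of Eilenberg--MacLane spaces then follows from the canonical isomorphism $T(A\otimes B)\cong TA\otimes TB$.

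For the inductive step, consider a Postnikov decomposition of $X$. Each principal fibration $K(\Fp,n)\to X_k\to X_{k-1}$ gives rise, upon applying $\map(B\Z/p,-)$, to a fibration whose fiber is $\map(B\Z/p,K(\Fp,n))$. I would then compare the Eilenberg--Moore (or Serre) spectral sequence computing $H^*\map(B\Z/p,X_k)$ with the spectral sequence obtained by applying $T$ to the corresponding spectral sequence for $X_k$: exactness of $T$ and its compatibility with tensor products yield a map of spectral sequences, an isomorphism on $E_2$ by the inductive hypothesis for base and fiber, hence an isomorphism on abutments.

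The main obstacle is the passage to the limit of the Postnikov tower: $T$ commutes with colimits but not with arbitrary inverse limits in $\U$, so one must verify convergence of the comparison as the tower is built up. This is precisely the role played by the hypotheses that $X$ be $p$-complete and $1$-connected and that $T_VH^*X$ be of finite type and $1$-connected. Together they ensure both that the $p$-adic Postnikov tower converges to $X$ and that the $T$-functor behaves well with respect to this convergence, so that the inductive isomorphism of unstable algebras survives the limit.
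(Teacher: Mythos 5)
The paper does not prove this statement: Theorem~\ref{Lannes2} is cited from Lannes' foundational article \cite{La92}, where the proof occupies a substantial portion of a long and technical memoir. So there is no ``paper's own proof'' to compare against; I can only assess your sketch on its own terms.

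Your outline does capture the broad contours of the known proofs: construct the comparison map by adjunction, establish the result for mod-$p$ Eilenberg--MacLane spaces (where both sides can be computed explicitly), and propagate along a tower by a spectral sequence comparison made possible by the exactness and tensor-compatibility of $T$, with the analytic hypotheses on $X$ entering in the passage to the limit. These are indeed the key ingredients. But the sketch papers over the places where essentially all of the real work lies, and some of them are not merely technical.

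First, $\map(B\Z/p,X_k)$ is in general \emph{not} connected (already $\map(B\Z/p,B\Z/p)$ has $p$ components), and its components are not simply connected; the Eilenberg--Moore spectral sequence for the fibration $\map(B\Z/p,K(\Fp,n))\to\map(B\Z/p,X_k)\to\map(B\Z/p,X_{k-1})$ therefore cannot be invoked without serious justification, and this connectivity issue is one of the main reasons the theorem is delicate. Second, the Postnikov stages of a $p$-complete space are $K(\pi_n X,n)$ with $\pi_n X$ a pro-$p$ group, not $K(\Fp,n)$; one needs a principal refinement or, as Lannes and Bousfield--Kan do, a cosimplicial resolution of $X$ by $\Fp$-GEMs, which is a different (and better-adapted) tower than the literal Postnikov tower. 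Third, the convergence step --- the claim that the hypotheses ``ensure'' the inductive isomorphism survives the limit --- is exactly the crux of the theorem, and it requires a careful comparison of the $\lim$/$\lim^1$ behaviour of the two towers together with the fact that $T$ preserves finite type and is exact but does not commute with inverse limits. You name the difficulty correctly but do not resolve it. In short, the strategy is the right one, but as written the argument has a genuine gap at each of these three points, and filling them is the content of Lannes' theorem rather than routine verification.
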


\begin{nota}
\cite{K95}
 Denote by $\Delta (X)$ the homotopy cofibre of the map $X \ra
\map (B\Z/p, X)$ induced by $B\Z/p \rightarrow *$.
\end{nota}

Theorem \ref{Lannes2} yields the following:

 \begin{prop}
 \label{prop:w_Delta}
 Under the above hypotheses on $X$, 
 $H^* \Delta(X) \cong \bar T H^*X$, 
 hence 
 \begin{enumerate}
 \item 
 $w_{\Delta(X)}=\partial w_X$;
   \item 
 if $H^*X \in \U_n$, then $H^*\Delta(X)  \in \U_{n-1}.$ 
 \end{enumerate}
\end{prop}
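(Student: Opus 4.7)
The plan is to identify $H^*\Delta(X)$ via the long exact sequence of the cofibration and then transport the statements about $w$ and the Krull filtration through the isomorphism $\tilde H^*\Delta(X) \cong \bar T H^*X$.

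First, I would invoke Lannes' Theorem \ref{Lannes2} to identify $H^*\map(B\Z/p,X) \cong T H^*X$. The map $X \to \map(B\Z/p,X)$ that defines $\Delta(X)$ is induced by $B\Z/p \to \ast$; unpacking the adjunction defining $T$, the induced map $TH^*X \to H^*X$ is precisely the split surjection corresponding to the decomposition $T \cong \id \oplus \bar T$ (induced by the splitting $H^*B\Z/p \cong \Fp \oplus \tilde H^*B\Z/p$ in $\U$). In particular, this map admits a section of unstable modules whose kernel is $\bar T H^*X$.

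Next, I would apply the long exact sequence in cohomology for the cofibre sequence $X \to \map(B\Z/p,X) \to \Delta(X)$. Since $TH^*X \to H^*X$ is a split epimorphism, all connecting maps vanish and the sequence breaks into split short exact sequences, yielding $\tilde H^*\Delta(X) \cong \ker\big(TH^*X \to H^*X\big) \cong \bar T H^*X$, which is the desired isomorphism. The finiteness/connectivity hypotheses on $T_V H^*X$ (in particular on $TH^*X$) are what allow Lannes' theorem to be applied and guarantee that all of this takes place within $\U$.

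Once this isomorphism is in hand, part (1) is immediate from the formula $w_{\bar T M} = \partial w_M$ established in Proposition \ref{profile_properties}(2), applied to $M = H^*X$. For part (2), if $H^*X \in \U_n$ then by Theorem \ref{krull} one has $\bar T^{n+1} H^*X = 0$; rewriting this as $\bar T^n(\bar T H^*X) = 0$ and applying Theorem \ref{krull} in the opposite direction gives $\bar T H^*X \in \U_{n-1}$, so $H^*\Delta(X) \in \U_{n-1}$.

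The main obstacle is the careful identification in the first step: one must check that the map $X \to \map(B\Z/p,X)$ induced by collapsing $B\Z/p$ really does correspond, under Lannes' isomorphism, to the projection $T \twoheadrightarrow \id$ dual to the unit $\Fp \hookrightarrow H^*B\Z/p$. Once this identification is made, everything else is formal consequences of the results already in place.
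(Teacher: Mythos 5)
Your proposal is correct and fills in exactly the argument the paper leaves implicit behind the phrase ``Theorem \ref{Lannes2} yields the following.'' The identification of $X \to \map(B\Z/p,X)$ with the split surjection $TH^*X \twoheadrightarrow H^*X$ arising from $T \cong \id \oplus \bar T$, the resulting collapse of the long exact sequence of the cofibration into short exact sequences giving $\tilde H^*\Delta(X) \cong \bar T H^*X$, and the deduction of (1) from $w_{\bar T M} = \partial w_M$ and of (2) from $\bar T^{n+1} H^*X = \bar T^n(\bar T H^*X) = 0$ are all correct and match the intended reasoning.
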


The functor $T$ induces  $T : \K \rightarrow \K$, which commutes with 
the indecomposables functor for augmented unstable algebras \cite[Lemma 
6.4.2]{S94}: namely, for $K$ an augmented unstable algebra, 
there is a natural isomorphism 
$
T(QK) \cong Q(TK).
$ 
There  is no analogous statement  for $\bar T$; however, if  $Z$ is an  
$H$-space, there is a  homotopy equivalence:
$$
\map(B\Z/p,Z) \cong Z \times \mappt(B\Z/p,Z).
$$

For example, this leads to:
 
\begin{prop}
\label{indec}
 \cite{CCS07}
For $Z$ an $H$-space (also satisfying the global hypotheses)
$$Q H^* \mappt
(B\Z/p^{\wedge n}, Z )\cong  \bar{T}^n QH^* Z .
$$
\end{prop}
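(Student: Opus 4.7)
The plan is to reduce Proposition \ref{indec} to the $n=1$ case and then iterate. Throughout, one uses the equivalence $\map(B\Z/p,Z)\simeq Z\times \mappt(B\Z/p,Z)$ available because $Z$ is an $H$-space, together with Lannes' theorem (Theorem \ref{Lannes2}) and the naturality of $T$ on unstable algebras.

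For the base case $n=1$: taking cohomology of the $H$-space splitting and applying the Künneth formula (using the finite-type hypothesis) yields an isomorphism of augmented unstable algebras
\[
H^*\map(B\Z/p,Z)\cong H^*Z\otimes H^*\mappt(B\Z/p,Z).
\]
Apply the indecomposables functor $Q$. Since $Q$ turns tensor products of augmented unstable algebras into direct sums, the right hand side becomes
\[
QH^*Z\;\oplus\;QH^*\mappt(B\Z/p,Z).
\]
On the other hand, by Lannes' Theorem \ref{Lannes2}, $H^*\map(B\Z/p,Z)\cong TH^*Z$ as unstable algebras, and the commutation $Q\circ T\cong T\circ Q$ (recalled in the discussion preceding the proposition) together with the splitting $T\cong \id\oplus\bar T$ gives
\[
QH^*\map(B\Z/p,Z)\cong T(QH^*Z)\cong QH^*Z\;\oplus\;\bar T(QH^*Z).
\]
A direct identification of the $QH^*Z$ summands (both arise from the constant-map section $Z\hookrightarrow \map(B\Z/p,Z)$) lets us cancel them and conclude $QH^*\mappt(B\Z/p,Z)\cong\bar T(QH^*Z)$.

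For the inductive step, use the adjunction $\mappt(B\Z/p^{\wedge n},Z)\simeq \mappt(B\Z/p,\mappt(B\Z/p^{\wedge(n-1)},Z))$ and observe that $\mappt(B\Z/p^{\wedge(n-1)},Z)$ inherits an $H$-space structure from $Z$. Then the base case applied to $Z':=\mappt(B\Z/p^{\wedge(n-1)},Z)$ yields
\[
QH^*\mappt(B\Z/p^{\wedge n},Z)\cong \bar T\bigl(QH^*\mappt(B\Z/p^{\wedge(n-1)},Z)\bigr),
\]
and the inductive hypothesis identifies the right hand side with $\bar T^{n}QH^*Z$.

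The main point requiring care is that the hypotheses needed for Lannes' theorem (1-connectedness, finite type of $TH^*(-)$) are preserved at each stage of the induction; this follows because $T$ commutes with tensor products and suspensions and the $H$-space splitting shows that $H^*\mappt(B\Z/p,Z)$ is a tensor factor of $TH^*Z$, which is of finite type by hypothesis and remains $1$-connected under the running assumptions. Once this is verified, each iteration of the argument proceeds exactly as in the base case, so no further technical input is required.
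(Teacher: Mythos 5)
Your proof follows exactly the strategy the paper sketches in the lines preceding the proposition (the paper itself gives no proof, citing \cite{CCS07}): use the $H$-space splitting of $\map(B\Z/p,Z)$, the commutation $Q\circ T\cong T\circ Q$ on $\K$, and the splitting $T\cong \id\oplus\bar T$, then iterate via the adjunction $\mappt(B\Z/p^{\wedge n},Z)\simeq\mappt(B\Z/p,\mappt(B\Z/p^{\wedge(n-1)},Z))$. The one step worth stating a little more carefully is the cancellation: the two copies of $QH^*Z$ are matched because the natural inclusion $M\to TM$ (the $\id$-summand of $T=\id\oplus\bar T$) corresponds under Lannes' isomorphism to the map induced by evaluation at the basepoint $\map(B\Z/p,Z)\to Z$, which is precisely the projection in the $H$-space splitting; once that is observed, the block-triangular cancellation of the common summand is legitimate (everything is of finite type, so one may also cancel degreewise). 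You should also note, as you begin to, that at each stage $\mappt(B\Z/p^{\wedge k},Z)$ must remain within the global hypotheses ($p$-complete, $1$-connected, $T$-finite-type) so that Theorem \ref{Lannes2} applies again — this is the main content requiring verification and is where the hypotheses of the section are used.
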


\begin{nota}
For $X$  a connected space, write $ w_X:=w_{H^*X}$ 
and $q_X:=w_{QH^*X}$.
\end{nota}

\begin{proof}[Proof of Theorem \ref{thm:general}]
Proposition \ref{nilp} implies that an unstable module $M$ is locally finite if 
and
only if $w_M=0$, which implies
the equivalence $(\ref{cond:U0}) \Leftrightarrow (\ref{cond:wX0})$. Corollary
\ref{cor:wK0} gives 
the equivalence  $(\ref{cond:wX0}) \Leftrightarrow (\ref{cond:qX0})$ and
Corollary \ref{cor:wK_qK_Id}  
the equivalence $(\ref{cond:wXleqId}) \Leftrightarrow (\ref{cond:qXleqId})$.
The implications 
$(\ref{cond:wX0}) \Rightarrow (\ref{cond:wXleqId})$ and $(\ref{cond:wXleqId})
\Rightarrow (\ref{cond:wX-Id_bounded})$ are clear,  hence it suffices to
establish: 
\begin{itemize}
\item 
$
 (\ref{cond:wX-Id_bounded})
\Rightarrow (\ref{cond:wX0}) $: if $w_X - \id$ is bounded then $w_X =0$. 
\end{itemize}

Suppose that there exists a space  $X$ (satisfying the global hypotheses) such
that $0 \neq w_X \leq \id$ and $\tilde H^* X \in \Nil_1$; {\em reductio ad 
absurdam}. 

The first step is analogous to Kuhn's reduction  \cite{K95}. As in Lemma 
\ref{lem:w_M-id-bounded}, 
set $d:= \sup \{ w_X (i) -i \}$ and  $s := \inf \{ i | w_X (i) -i = d \}$. 
Proceeding 
as in Lemma \ref{lem:w_M-id-bounded}, set $Y:= (\Delta^{s+ d -1} X)^{\wedge}_p 
$, so that 
Proposition \ref{prop:w_Delta} gives
\begin{eqnarray*}
w_Y (i)  &=&0, \  i < s \\
w_Y (s) &=& 1 \\
w_Y (i) &\leq & i-s +1, \ i > s.
\end{eqnarray*}
Moreover, by collapsing down a low-dimensional skeleton and $p$-completing, one 
can arrange that 
 $R_iH^*Y=0$  for  $0<i<s$.

In order to work with pointed mapping spaces, consider  $Z := \Omega (\Sigma 
X)^{\wedge}_p$. By the Bott-Samelson theorem,    
 $H^*Z \cong \mathbb{T}(\tilde H^* X)$ as an unstable module, hence 
the global hypotheses are satisfied by $Z$. Moreover, by 
 Proposition \ref{profile_properties}, one has 
 $R_i H^* Z =0$ for $0<i <s$ and  $R_i  H^* Z \in \U_{i-s+1}$ for $i 
\geq s$. Lemma \ref{lem:profile_T_connectivity} therefore implies that 
 $\bar T ^n H^*Z$  is $(n+s-2)$-connected for $n >0$ (and $H^* Z$ is 
$s-1$-connected).   

 It follows from \cite{BK87} (in particular using Chapters 6 and 9 to show that 
there are no phantom maps) that 
 $\mappt(B\Z/p^{\wedge n},Z)$ is $(n+s-2)$-connected.

By construction, $R_s H^* Z$ is a reduced unstable module in $\U_1$, hence (by 
\cite[Proposition 0.6]{K95}, for example) there exists a non-trivial morphism 
$R_s H^* Z \rightarrow F(1)$, where $F(1)$ is the free unstable module on a 
generator of degree $1$. Composing with the canonical inclusion $F(1) 
\hookrightarrow \tilde H^* B \Z/p$ gives 
$$
\varphi^*_s: H^* Z \twoheadrightarrow  H^*Z/\nlp_{s+1}H^* Z \cong \F2 \oplus 
\Sigma ^s R_s H^* Z \to
\F2 
\oplus \Sigma ^s F(1)  \subset \F2 \oplus \Sigma^s  \tilde H^* B\Z/p,
$$
that is a morphism of unstable algebras, by compatibility of the nilpotent 
filtration with  multiplicative structures.

By Lannes' theory \cite{La92}, the morphism $\varphi^*_s$ can be realized as the
cohomology of a map 
$\varphi_s : \Sigma^s B \Z/p \rightarrow Z$, by applying  Theorem \ref{Lannes2} 
together with the Hurewicz theorem, since  $\mappt(B\Z/p,Z)$ is
$(s-1)$-connected. 

Thus, consider the extension problem
\[
 \xymatrix{
 \Sigma^s B \Z/p \ar[r]^{\varphi_s} 
 \ar[d]
 &
 Z
 \\
 \Sigma^{s-1} K (\Z/p, 2),
 \ar@{.>}[ur]
}
 \]
where the vertical map is the $(s-1)$-fold suspension of the canonical map
$\Sigma B \Z/p 
\rightarrow K (\Z/p , 2)$. 

Algebraically it is clear that no such extension can exist, since $\varphi_s^*$ 
is non-trivial in positive degrees (by construction), whereas 
$$
\Hom_\U (\tilde H^* Z, \tilde H^*\Sigma^{s-1}K(\Z/p, 2)) =0
$$
since $\tilde H^* Z \in \Nil_s$ and $\tilde H^* \Sigma^{s-1}K(\Z/p, 2)$ is the 
$(s-1)$-fold suspension of a reduced module ($H^*K(\Z/p,2)$ is reduced by  
Proposition \ref{EML} below).

However, obstruction theory shows that one can construct such a factorization, 
as follows. 
Recall that $K(\Z/p,2) \simeq B (B \Z/p) $ can be built, starting from  $\Sigma 
B\Z/p$, using 
Milnor's 
construction: there is a filtration   $* =C_0 \subset C_1= \Sigma 
B\Z/p 
\subset 
C_2 \subset \ldots 
\subset \cup_n C_n=K(\Z/p,2)$
 with associated (homotopy) cofibre sequences 
$$
\Sigma^{n-1}B\Z/p^{\wedge n}  \rightarrow  C_{n-1} \rightarrow C_{n} .
$$

The associated obstructions to extending  $\varphi_s \colon \Sigma^s 
B\Z/p \rightarrow Z$  lie in the pointed homotopy groups 
$$
[\Sigma ^{n+s-2}  (B\Z/p)^{\wedge n}, Z]= \pi_{n+s-2}
\mappt( B\Z/p^{\wedge n},  Z).
$$ 
The groups $\pi_{n+s-2}\mappt (B\Z/p^{\wedge n},  Z)$ are trivial, since 
$\mappt(B\Z/p^{\wedge n},Z)$ is $(n+s-2)$-connected, as observed above. 
It follows that an extension exists, which is a contradiction to the existence
of such a space $Z$,  completing the proof. 
\end{proof}

\begin{prop}
\label{EML}
The unstable module $H^*K(\Z/p,2)$ is reduced.
\end{prop}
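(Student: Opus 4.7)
The plan is to identify $H^*K(\Z/p,2)$ with the free unstable algebra on a degree-$2$ class and then analyze its Steenrod structure directly.

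First, I would invoke the classical Serre-Cartan identification $H^*K(\Z/p,2) \cong U(F(2))$, where $U\colon \U \to \K$ is the left adjoint to the forgetful functor and $F(2)$ is the free unstable module on a class of degree $2$. For $p=2$, Serre's theorem identifies $U(F(2))$ explicitly as the polynomial algebra $\F_2[\Sq^I\iota_2]$ indexed by admissible sequences $I$ of excess $<2$. For $p$ odd, Cartan's computation gives a tensor product of a polynomial algebra on the even-degree admissibles $\mathcal{P}^I\iota_2$ with an exterior algebra on the odd-degree admissibles $\beta \mathcal{P}^I\iota_2$.

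For $p=2$ the conclusion is then essentially immediate. An unstable $\mathcal{A}_2$-module is reduced if and only if no nonzero element is annihilated by some iterate of $\Sq_0 \colon x \mapsto \Sq^{|x|}x$. The restriction axiom in $\K$ gives $\Sq_0 x = x^2$, and hence $\Sq_0^k x = x^{2^k}$; since a polynomial $\F_2$-algebra has no zero divisors, $x^{2^k}\neq 0$ whenever $x\neq 0$. Thus $H^*K(\Z/2,2)$ is reduced.

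For $p$ odd, the same argument applies verbatim to the polynomial (even-degree) subalgebra via the restriction axiom $\mathcal{P}^{|x|/2}x = x^p$. The remaining task is to rule out nilpotency for elements involving the exterior odd-degree generators, and this is the main obstacle: such generators satisfy $y^2=0$, so one must instead track how $\beta$ and the operations $\mathcal{P}^i$ interact with products of odd and even generators through the Cartan formula. A uniform alternative that avoids the odd-prime case analysis and treats all primes at once is to combine two observations: that $F(2)$ is itself reduced (since it embeds into the reduced module $\tilde H^*(B\Z/p)^{\otimes 2}$), and that the functor $U$ preserves reducedness of underlying unstable modules, a fact that can be extracted from the Steenrod-Epstein description of $U(M)$ given in \cite{S94}. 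Applied to $M = F(2)$, this delivers the proposition.
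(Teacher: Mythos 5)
Your argument for $p=2$ is correct and, if anything, cleaner than the paper's citation of \cite{K98}: once one has Serre's description of $H^*K(\Z/2,2)$ as a polynomial algebra on the admissible $\Sq^I\iota_2$ of excess $<2$, the restriction axiom $\Sq_0 x = x^2$ and the absence of zero divisors give injectivity of $\Sq_0$, which is exactly the characterization of a reduced unstable $\mathcal{A}_2$-module (the kernel of $\Sq_0$ is the maximal suspension submodule). You could say this in one line where the paper defers to a reference.

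The odd-prime case, however, is where all the work is, and that is where your proposal has a genuine gap. You acknowledge that the direct route — tracking how $\beta$ and the $\mathcal{P}^i$ interact with the exterior generators via the Cartan formula — is ``the main obstacle,'' and you do not carry it out; but that is precisely what the paper's proof does, reducing to the polynomial part $\F_p[x,x_h]$ by applying suitable operations (the derivations $[\beta,Q_i]$, chosen via a lexicographic-order argument) to kill the exterior factors while keeping the image nonzero. Your proposed ``uniform alternative'' rests on the assertion that $U$ preserves reducedness of underlying unstable modules, ``extracted from the Steenrod--Epstein description.'' For $p=2$ this is immediate (indeed $U(M)$ is always polynomial, hence always reduced, regardless of $M$), but for $p$ odd it is not: Steenrod--Epstein exhibits $U(M)$ as a free graded-commutative algebra, i.e.\ a polynomial algebra tensored with an exterior algebra, and the exterior generators $y$ satisfy $y^2=0$, so the squaring argument breaks down. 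Establishing that the $\mathcal{A}_p$-submodules they generate are not suspensions — equivalently, that the Bockstein/Milnor operations act non-trivially enough — is the entire content of the proposition in the odd case, not a corollary of the algebra presentation. The paper's own disclaimer that it does not know a convenient reference is evidence that no off-the-shelf ``$U$ preserves reducedness'' theorem is available; you would need to prove it, and doing so would reproduce the computation you set out to avoid.
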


\begin{proof}
This result is well known to the experts, and holds for $H^*K(\Z/p,n)$ 
for any $n \in \N$. For $p=2$ the result is established in \cite{K98}; a proof 
is sketched here for $p$ odd, since we do not know a convenient reference.

The cohomology $H^*K(\Z/p,2)$ is isomorphic (with the usual notation) to
 $$\Fp[x, \beta P^{I_h}\beta (x)] \otimes E(P^{I_\ell} \beta (x)) \cong  \Fp[x, 
x_h | h\geq 1]
\otimes E(y_\ell |\ell\geq 0) $$
with $\vert x \vert=2$, $I_h=(p^{h-1},p^{h-2},\ldots, p,1)$, $h\geq 1$, and 
$\beta (x) =y_0$; in particular,   $\beta(y_h)=x_h$, $h \geq 1$. It is enough 
to show that, for any non zero 
element $z \in \tilde
H^*K(\Z/p,2)$, there exists an operation $\theta$ such that $\theta (z) \in 
\Fp[x, x_h] $ and $\theta (z)
\not =0$. 

Any element $z$ can be written as a sum   $\sum_{0 \leq i \leq t} \sum_j 
P_{i,j}(x,x_h) \otimes 
L_{i,j}(y_\ell)$.
If $z$ has degree $2n$ and there is   a non-trivial term with exterior part of 
degree $0$,  a straightforward 
application of the Cartan formula shows 
that $\theta = P^n$ suffices, since the reduced powers act trivially on the
exterior 
generators. 

For a general element, one reduces to  such elements by applying
operations which 
decrease (non-trivially) the length of the exterior factors that occur. Consider
amongst the exterior 
factors 
 a term $L_{i,j}$ of minimal length, and the minimal $\ell$ for which  $y_\ell$ 
occurs in it.
 Let $Q_i$ be the usual Milnor derivation, 
$[\beta,Q_i]$ is also a derivation; it acts trivially on the $x_h$, sends $x$ 
to 
$y_{i+1}$, 
and $y_\ell$  to  
$x_{i-\ell+1}^{p^\ell}$. Using a standard lexicographic 
order argument, one can see that this operation does  the job for $i$ large
enough.
\end{proof}


\section{Using the Eilenberg-Moore spectral sequence}
\label{sect:EMSS}

In this section, the prime $p$ is taken to be $2$ and the space $X$ is 
$1$-connected such that  $\tilde H^*X$ is 
nilpotent and of finite type. The objective of this section is to prove the 
following, which is equivalent to Theorem \ref{thm:R2} of the Introduction.

\begin{thm}\label{infini} Let $X$ be a space such that $\tilde H^*X $ is of 
finite type and is nilpotent.
If  $w_X(1) =d  \in \N  $ then  $w_X(2)  \geq 2d  $.
\end{thm}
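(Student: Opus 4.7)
My plan is to use the Eilenberg-Moore spectral sequence (EMSS) for $\Omega X$,
$$E_2^{-s,t} = \Tor^{H^*X}_{s,t}(\F2, \F2) \Longrightarrow H^{t-s}(\Omega X),$$
focusing on the columns $-1$ and $-2$ and the associated extension in $H^*\Omega X$. The $-1$ column is identified (up to the usual shift) with $QH^*X$, and $\Tor^{H^*X}_2$ carries a natural divided-square operation $\gamma_2\colon \Tor_1 \to \Tor_2$ sending a bar representative $x \in \bar K$ to $[x|x]$. For $p=2$ this operation is additive and factors through the Frobenius twist $\Phi$, so it doubles the polynomial degree on reduced pieces.

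From the hypothesis $w_X(1)=d$ and the identification $f(R_1 QH^*X) \cong f(R_1 H^*X)$ coming from the surjection $QH^*X \twoheadrightarrow \Sigma R_1 H^*X$ (whose kernel lies in $\Nil_2$), I extract a polynomial-degree-$d$ reduced subfunctor in $f(\Tor_1)$. Applying $\gamma_2$ then produces a subfunctor $\Phi f(R_1 H^*X) \subseteq f(\Tor_2)$ of polynomial degree $2d$. A bidegree check on the EMSS differentials, using that $E_r^{0,*}$ is concentrated in internal degree $0$, shows this class is a permanent cycle and survives to $E_\infty^{-2}$.

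The crucial step is to transport this information back into $R_2 H^*X$. The bar-complex description realizes $\Tor_2^{H^*X}$ as a subquotient of $\bar K \otimes \bar K$ controlled by the multiplication $\mu\colon \bar K \otimes \bar K \to \bar K$; since $\mathrm{im}(\mu) = (\bar K)^2 \subseteq \nlp_2 \bar K$, passing to the associated graded pieces of the nilpotent filtration and applying $f$ relates $\Tor_2$-classes outside the Frobenius twist to $f(R_2 H^*X)$. A vanishing $\Ext^1_\calF$ computation between $\Phi f(R_1 H^*X)$ and polynomial functors of polynomial degree strictly less than $2d$ prevents the surviving Frobenius-twisted class from being absorbed into a lower-polynomial-degree stratum of the extension associated with the filtration, forcing $f(R_2 H^*X)$ to contain a polynomial-degree-$2d$ subfunctor; i.e. $w_X(2) \geq 2d$. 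Proposition \ref{prop:restriction} is used along the way to reduce to a finitely generated submodule of $\tilde H^*X$, where the functorial analysis is cleanest; the resulting $2d$-degree class is visibly the image of the cup product on $R_1 H^*X$, explaining the strengthened statement in Remark \ref{rem:strengthen}.

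The main technical obstacle is the $\Ext^1_\calF$ vanishing step, which requires detailed knowledge of extensions between polynomial functors of specific polynomial degrees—specifically to rule out degenerations of $\Phi f(R_1 H^*X)$ into lower-polynomial-degree strata, both at the level of $E_\infty$-permanent cycles and under the identification with $R_2 H^*X$.
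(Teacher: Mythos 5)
Your overall setup matches the paper's (the Eilenberg–Moore spectral sequence for $\Omega X$, the columns $E_\infty^{-1,*}$, $E_\infty^{-2,*}$, the identification $fR_1QH^*X \cong fR_1 H^*X$, and the reduction via Proposition \ref{prop:restriction}), but the central step is wrong. You claim that the divided square $\gamma_2\colon \Tor_1 \to \Tor_2$ "factors through the Frobenius twist $\Phi$, so it doubles the polynomial degree on reduced pieces," producing a subfunctor $\Phi f(R_1 H^*X) \subseteq f(\Tor_2)$ of polynomial degree $2d$. This is false. The Frobenius doubles \emph{internal} degree in $\U$, but under the functor $f\colon \U \to \calF$ it is invisible: for $M$ reduced, $\Phi M \hookrightarrow M$ has suspension (hence nilpotent) cokernel, so $f(\Phi M) \cong f(M)$ and the polynomial degree stays at $d$. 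Equivalently, at the level of $\calF$ the Frobenius is the map $S^1 \to S^2$, $x\mapsto x^2$, and its image, as a subfunctor of $S^2$, is isomorphic to $S^1$ (polynomial degree $1$, not $2$). So the class you construct in $f(\Tor_2)$ lands in a degree-$d$ subfunctor, and the subsequent $\Ext^1_\calF$ argument has nothing to push against.

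What is actually available from the divided square is precisely that the cup-product map $S^2(F_1)\to F_2$ is compatible with the Frobenius inclusion $F_1 \hookrightarrow S^2(F_1)$, and the paper exploits this differently: the crucial input is the \emph{non-splitting} of the short exact sequence $0\to F_1 \to S^2(F_1)\to \Lambda^2(F_1)\to 0$ (Lemma \ref{lem:non-split}, from Kuhn's \cite[Theorem 4.8]{KIII}), not any degree increase. Pulling this back along $\overline{F_1}\subset F_1$ shows that $0\to F_1 \to F_2 \to F_2/F_1\to 0$ restricts to a non-split extension over $\Lambda^2(\overline{F_1})$ (Lemma \ref{lem:non-split_bar}). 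One then shows that if $\deg fR_2 H^*X < 2d$ the cup product vanishes on $\overline{F_1}\otimes \overline{F_1}$, forcing (via the bar complex) the map $\Lambda^2\overline{F_1}\to F_2/F_1$ to factor through $\overline{F_1}\otimes\overline{F_1}$; but $\Ext^1_\calF(\overline{F_1}\otimes \overline{F_1}, F_1)=0$ (Lemma \ref{nonsplit}) then contradicts the non-splitting. The $\Ext^1$ vanishing you invoke thus plays the opposite role from what you intend: it is used to rule out a factorization, not to protect a class in $f(R_2 H^*X)$. Your proposal cannot be repaired without replacing the Frobenius-degree-doubling claim by the non-splitting argument, since without Lemma \ref{lem:non-split} there is no non-trivial extension class to obstruct.
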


The interest  of the result is to give some control 
on $R_2H^*X$, starting from information about  $R_1H^*X$.
See Remark \ref{rem:strengthen} for a slightly refined version of this theorem.

\begin{rmq}
The theorem is stated only for $p=2$; the difficulties that occur in the odd 
primary case in  \cite{S98,S10} also arise here, but look more manageable. 
\end{rmq}

The method was originally suggested by the following observation:

\begin{prop} 
Let $M$ be a connected, reduced unstable module such that $\deg (f M ) = d \in 
\N$. If $d>0$, then the unstable module $\bT (M)= \bigoplus_i M^{\otimes i}$ 
does not carry 
the structure of an unstable algebra.
\end{prop}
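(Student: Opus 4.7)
The plan is to argue by contradiction: assume $\bT(M)$ carries an unstable algebra structure, and deduce that $\Sq_0$ acts as zero on $M$; combined with $M$ being reduced, this forces $M = 0$, contradicting the fact that $\deg fM = d > 0$ requires $M \neq 0$.

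The first key observation is that the $\A$-module structure on $\bT(M) = \bigoplus_i M^{\otimes i}$, defined via Cartan's formula inside each tensor factor, preserves the tensor-length grading: each $\Sq^k$ sends $M^{\otimes i}$ into itself. In particular, for any $x \in M = M^{\otimes 1}$ of internal degree $n$, the element $\Sq^n x$ lies in $M^{\otimes 1}$. With the natural tensor-concatenation multiplication on $\bT(M)$, the restriction axiom reads $\Sq^n x = x \otimes x$; but the right-hand side lies in $M^{\otimes 2}$, a summand disjoint from $M^{\otimes 1}$ in the direct-sum decomposition. Both sides must therefore vanish, so $\Sq^n x = 0$ for every $x \in M^n$. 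Hence $M \in \Nil_1$, and since $M$ is reduced (so $M \cap \Nil_1 = 0$), we conclude $M = 0$.

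To obtain the statement in its strongest form --- where one allows an arbitrary $\A$-linear multiplication $\mu$ on the underlying unstable module $\bT(M)$ --- the restriction axiom still produces an $\A$-linear map $\mu_1 \colon M \otimes M \to M$, namely the tensor-length-one component of $\mu|_{M \otimes M}$, satisfying $\mu_1(x \otimes x) = \Sq_0 x$; the other tensor-length components of $\mu(x \otimes x)$ must vanish, precisely because $\Sq^n x$ is forced into $M^{\otimes 1}$. The main obstacle is then to rule out such a $\mu_1$. My approach would be to use commutativity of $\mu$ to factor $\mu_1$ through the symmetric square $S^2 M$, compare with the canonical $\A$-linear map $\sigma \colon \Phi M \to S^2 M$, $\Phi x \mapsto x^2$ (which is $\A$-linear by the char-$2$ collapse of off-diagonal Cartan terms in $S^2$), and show via the functor $f \colon \U \to \calF$ and a polynomial-degree obstruction in $\calF$ --- exploiting that $S^2(fM)$ has degree $2d$ while $fM$ has degree $d$ --- that the required factorization of $\lambda_M \colon \Phi M \to M$ through $\sigma$ cannot exist when $d > 0$.
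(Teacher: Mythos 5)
The paper does not actually supply a proof of this Proposition; it is recorded as motivation, the subsequent remark delegating it to ``a general structure result for reduced unstable algebras,'' and the $\Ext^1_\calF$ machinery of the section is then developed for Theorem~\ref{infini} rather than for this statement per se. That said, your reduction is essentially the right one and matches the remark's heuristic: an arbitrary commutative $\A$-linear product $\mu$ on $\bT(M)$, together with the restriction axiom, produces an $\A$-linear map $\bar\mu_1 \colon S^2 M \to M$ (the $M$-component of $\mu$ restricted to $S^2 M$) satisfying $\bar\mu_1 \circ \sigma_M = \lambda_M$, where $\sigma_M \colon \Phi M \to S^2 M$ is the $\A$-linear squaring; applying the exact functor $f$ (which makes $f(\lambda_M)$ an isomorphism and identifies $f(\sigma_M)$ with the Frobenius $fM \hookrightarrow S^2(fM)$) would then split $0 \to fM \to S^2(fM) \to \Lambda^2(fM) \to 0$ in $\calF$. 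Your Part~1, for the concatenation product, is correct but nearly vacuous --- $\Sq^{|x|}x \in M^{\otimes 1}$ and $x\otimes x \in M^{\otimes 2}$ forces $x\otimes x = 0$, hence $x = 0$, without even invoking reducedness --- so the real content is entirely in the arbitrary-product case.

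The gap is your final step. You assert that the non-factorization follows from ``a polynomial-degree obstruction in $\calF$, exploiting that $S^2(fM)$ has degree $2d$ while $fM$ has degree $d$.'' Degree comparison alone is not an obstruction here: nothing in the polynomial filtration forbids a functor of degree $2d$ from retracting onto a degree-$d$ subfunctor, and in fact $0 \to F \to S^2 F \to \Lambda^2 F \to 0$ \emph{does} split $\GL(V)$-equivariantly for each $V$ taken separately (already for $F = S^1$ and $V = \F_2^2$ the invariant $e_1^2 + e_1e_2 + e_2^2 \in S^2(V)$ furnishes a complement to the Frobenius image). The obstruction is genuinely functorial: it is the non-vanishing of the class in $\Ext^1_\calF(\Lambda^2 F, F)$, which the paper imports from Kuhn's computation \cite[Theorem 4.8]{KIII} as Lemma~\ref{lem:non-split}. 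With that lemma in hand your sketch closes up and coincides in method with the paper's approach; without it, the decisive step is unproved.
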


\begin{rmq}
This result is a special case of a general structure result for reduced
unstable  algebras.  From the viewpoint of this paper, heuristically the idea 
is 
that cup products of 
classes in $M$ should appear in $M \otimes M$ whereas the restriction axiom for 
unstable algebras implies that cup squares occur in $M$. Thus, the triviality
of the extension between $M$ and $M \otimes M$ is incompatible with an unstable 
algebra structure.
\end{rmq}

The proof of Theorem \ref{infini} is based on the analysis of an $\Ext^1_\calF$ 
group, playing off the following  non-splitting result against a 
vanishing criterion. 

Recall from Section \ref{Krull} that $\calF$ denotes the category of functors 
on 
$\F2$-vector spaces; 
a functor of $\calF$ is finite if it has a finite composition series. As usual, 
$S^n$ denotes the $n$th symmetric
power functor and $\Lambda^n$ the $n$th exterior power functor.

\begin{lem}
\label{lem:non-split}
For $F \in \calF$ a non-constant finite functor, post-composition with 
 the short exact sequence $ 0 \rightarrow S^1 \rightarrow S^2 \rightarrow 
\Lambda ^2 \rightarrow 0$, where $S^1 \rightarrow S^2$ is the Frobenius 
$x\mapsto x^2$,  induces an exact sequence
 $$
 \xymatrix{
 0
 \ar[r]&
 F 
 \ar[r]
 &
 S^2 (F) 
\ar[r]
&
\Lambda^2 (F) 
\ar[r]
&
0
}
$$
which does not split.
\end{lem}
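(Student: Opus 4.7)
The plan is to reduce the non-splitting for a general non-constant finite $F$ to the special case of the identity functor $F = I$ via the exact difference functor $\Delta \colon \calF \to \calF$, and then to verify the identity case by direct computation.

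To set up the reduction, I would use that every finite functor in $\calF$ has finite polynomial degree (since the simple objects of $\calF$ are polynomial), so $F$ non-constant forces $\deg F = d \geq 1$. For any polynomial $G$, the splitting $G(V \oplus \F2) \cong G(V) \oplus (\Delta G)(V)$ induces natural isomorphisms
\[
\Delta(S^2 G) \cong (G \otimes \Delta G) \oplus S^2(\Delta G),
\qquad
\Delta(\Lambda^2 G) \cong (G \otimes \Delta G) \oplus \Lambda^2(\Delta G),
\]
and, using the additivity of the Frobenius at the prime $2$, one checks that $\Delta \psi_G$ lands entirely in the second summand as $\psi_{\Delta G}$. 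Applying the exact functor $\Delta$ to the sequence for $G$ therefore produces the direct sum of the analogous sequence for $\Delta G$ and the trivial split sequence $0 \to 0 \to G \otimes \Delta G \to G \otimes \Delta G \to 0$. Consequently, if the sequence for $F$ splits, so does the one for $\Delta^{d-1}F$; this is a polynomial functor of degree exactly $1$, hence isomorphic to $C \oplus I^{\oplus k}$ for a constant functor $C$ and some $k \geq 1$. A further direct-sum decomposition (once more exploiting that the Frobenius lands in the square summands) reduces to the sequence for $I^{\oplus k}$, and then, by splitting off the trivial cross-term summands of the form $0 \to 0 \to I \otimes I \to I \otimes I \to 0$, to the sequence for a single copy of $I$.

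It then suffices to show that $\psi \colon I \to S^2$ admits no natural retraction $\phi \colon S^2 \to I$. The retraction condition forces $\phi(e_i^2) = e_i$, and naturality under endomorphisms of $\F2^2$ forces $\phi(e_1 e_2) = e_1 + e_2$; passing to $\F2^n$ via the standard inclusions then gives $\phi(e_i e_j) = e_i + e_j$ for all $i \neq j$. The contradiction is produced by applying naturality to the map $f \colon \F2^3 \to \F2^2$ sending $e_1 \mapsto e_1$, $e_2 \mapsto 0$, $e_3 \mapsto e_2$, together with the element $\xi := e_1 e_2 + e_1 e_3 + e_2 e_3 \in S^2(\F2^3)$: by $\F2$-linearity $\phi(\xi) = (e_1+e_2) + (e_1 + e_3) + (e_2 + e_3) = 0$, whereas $S^2(f)(\xi) = e_1 e_2$ (the other two terms vanish because $f(e_2) = 0$), so $\phi(S^2(f)(\xi)) = e_1 + e_2 \neq 0 = f(\phi(\xi))$.

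The main obstacle is the bookkeeping in the $\Delta$-reduction, in particular the verification that the Frobenius respects the summand decomposition of $\Delta(S^2 G)$ at the prime $2$. This identification is what concentrates the whole obstruction to splitting into the identity-functor case, making the overall strategy feasible.
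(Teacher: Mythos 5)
Your proposal is correct, but it takes a genuinely different route from the paper. The paper's proof is a one-line citation: it invokes Kuhn's general non-splitting theorem \cite[Theorem 4.8]{KIII}, which says that post-composition by a non-constant finite functor preserves non-splitness of a short exact sequence, applied to the non-split sequence $0 \to S^1 \to S^2 \to \Lambda^2 \to 0$. You instead give a self-contained, elementary argument. The core of your reduction --- that $\Delta$ applied to the sequence for $G$ decomposes, because the Frobenius on $G(V \oplus \F_2) = G(V) \oplus \Delta G(V)$ sends $(a,b)$ to $(a^2, b^2)$ at the prime $2$ so that $\Delta\psi_G$ lands in $S^2(\Delta G)$ --- is sound, and so is the classification of finite degree-$\leq 1$ functors as $C \oplus I^{\oplus k}$ (the reduced part is additive, hence $V \mapsto V \otimes W$ over the prime field). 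The final hand computation for $F = I$ is correct as written: naturality under $e_1 \mapsto e_1$, $e_2 \mapsto e_1 + e_2$ does force $\phi(e_1 e_2) = e_1 + e_2$ given $\phi(e_i^2) = e_i$ and swap-symmetry, and your $f$ and $\xi$ then give the contradiction. (One could shortcut this step: naturality under the fold map $\F_2^2 \to \F_2$, $e_1, e_2 \mapsto e_1$, forces the coefficient sum of $\phi(e_1 e_2)$ to be $1$, which already contradicts swap-symmetry over $\F_2$.) The trade-off: the paper's approach is shorter and factors through a reusable structural theorem whose scope covers arbitrary non-split short exact sequences, while yours is longer but transparent, requiring nothing beyond the $\Delta$-calculus and the exponential identities for $S^2$ and $\Lambda^2$. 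If you keep your version, you should flag the classification of degree-$1$ finite functors (a standard but non-trivial fact) with a reference, and perhaps mention that your $\Delta$-reduction is essentially a special case of the mechanism behind Kuhn's theorem.
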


\begin{proof}
The result follows from \cite[Theorem 4.8]{KIII}, since the Frobenius fits into 
the non-split short exact sequence  $ 0 \ra 
S^1 \rightarrow S^2 
\rightarrow \Lambda^2 \ra  0$. 
\end{proof}

\begin{nota}
For $F\in \calF$  a finite functor of polynomial degree $d$, set 
$\overline{F}:= 
\ker 
\{ F \twoheadrightarrow q_{d-1} F \}$, where $q_{d-1} F$ is the largest 
quotient of degree $\leq d-1$. 
\end{nota}

Lemma \ref{lem:non-split} will be played off against the vanishing result for
$\Ext^1_\calF$ in the following statement.

\begin{lem} 
\label{nonsplit}
Let $F$ be a finite functor of polynomial degree $d$, $G_{<d}$  of degree $< 
d$, 
$G_{<2d}$ 
 of degree $<2d$ and $G_{\leq d}$ of degree $\leq d$.  Then
\begin{enumerate}
\item 
$\Hom_\calF (\overline{F}, G_{<d}) =0$;
 \item 
$\Hom_\calF (\overline{F} \otimes \overline{F} , G_{<2d}) = 0 = \Hom_\calF
(\Lambda ^2(\overline{F}) , G_{<2d})$;
\item 
 $\Ext_\calF^1 (\overline{F} \otimes \overline{F}, G_{\leq d} ) =0$. 
\end{enumerate}
\end{lem}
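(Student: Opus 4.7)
My plan is to prove (1), (2), (3) in order, with (3) reducing to (2) via a diagram chase.

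For (1), I will suppose $\phi\colon \overline{F}\to G_{<d}$ is nonzero; replacing $G_{<d}$ by $\mathrm{im}(\phi)$, we may assume $\phi$ is surjective. One forms the pushout $P := F\sqcup_{\overline{F}}G_{<d}$, which fits in a short exact sequence $0\to G_{<d}\to P\to q_{d-1}F\to 0$ whose outer terms have degree $<d$, so $\deg P < d$. By the universal property of $q_{d-1}$, the canonical map $F\to P$ then factors through $q_{d-1}F$, and hence the composite $\overline{F}\to F\to P$ factors through $\overline{F}\to q_{d-1}F = 0$. Since the pushout of a monomorphism along an arbitrary map remains a monomorphism, $G_{<d}\hookrightarrow P$ is injective; this forces $\phi = 0$.

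For (2), the $\Lambda^2$ vanishing will follow from the tensor-square statement via the natural surjection $\overline{F}\otimes \overline{F}\twoheadrightarrow \Lambda^2\overline{F}$. For the tensor square, given $\phi\colon \overline{F}\otimes \overline{F}\to G_{<2d}$, I plan to evaluate on $V_1\oplus V_2$ and use the cross-effect decomposition of each factor to produce a canonical $(d,d)$-bigraded direct summand of $(\overline{F}\otimes \overline{F})(V_1\oplus V_2)$ containing $\overline{F}(V_1)\otimes \overline{F}(V_2)$ (arising from the two inclusions $V_i\hookrightarrow V_1\oplus V_2$). Natural transformations respect the cross-effect decomposition, and $G_{<2d}(V_1\oplus V_2)$ has no $(d,d)$-bigraded part since its total degree is $<2d$; so $\phi$ must vanish on this summand. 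Naturality with respect to the fold $\nabla\colon V\oplus V\to V$ then transports this vanishing to the diagonal, yielding $\phi_V = 0$.

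For (3), write $X := \overline{F}\otimes \overline{F}$ and consider an extension $0\to G_{\leq d}\to E\to X\to 0$. I would set $Q(E):=\ker(E\to q_{2d-1}E)$; by (2), $q_{2d-1}X = 0$, while $q_{2d-1}G_{\leq d}=G_{\leq d}$ since $G_{\leq d}$ has degree $\leq d\leq 2d-1$. A $3\times 3$ diagram chase then yields $Q(E)\twoheadrightarrow X$ with kernel $H := Q(E)\cap G_{\leq d}$ and $G_{\leq d}+Q(E)=E$, so the original sequence splits if and only if $H = 0$. A d\'evissage on the composition length of $G_{\leq d}$, using the long exact sequence for $\Ext^{*}(X,-)$, reduces the problem to $G_{\leq d} = S$ simple of degree $\leq d$. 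The main obstacle is this simple case: a hypothetical non-split extension would force $H = S$, hence $Q(E) = E$, so $E$ would have no quotient of degree $\leq 2d-1$ while containing $S$ (of degree $\leq d$) in its socle. To exclude this, I would exploit the tensor-square structure of $X$ by revisiting the cross-effect/fold-map argument of (2) in this enlarged setting, aiming to contradict the ``pure top-degree'' character of $\overline{F}\otimes \overline{F}$.
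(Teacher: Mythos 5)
Your argument for part (1) is correct: the pushout construction cleanly formalizes why $\overline{F}$ can have no nonzero quotient of degree $<d$, which is essentially what the paper leaves implicit in ``follows from the definition of $\overline{F}$.''

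For part (2), however, there is a real gap. You propose to isolate a ``canonical $(d,d)$-bigraded direct summand'' of $(\overline{F}\otimes\overline{F})(V_1\oplus V_2)$ containing $\overline{F}(V_1)\otimes\overline{F}(V_2)$, argue that a natural transformation $\phi$ to $G_{<2d}$ vanishes there, and then transport this along the fold map. The fold step is fine: the composite
\[
\overline{F}(V)\otimes\overline{F}(V)\xrightarrow{\overline{F}(i_1)\otimes\overline{F}(i_2)}(\overline{F}\otimes\overline{F})(V\oplus V)\xrightarrow{(\overline{F}\otimes\overline{F})(\nabla)}(\overline{F}\otimes\overline{F})(V)
\]
is the identity. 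The problem is earlier: $\overline{F}(V_1)\otimes\overline{F}(V_2)$ is not a natural summand of the functor $\overline{F}\otimes\overline{F}$ evaluated at $V_1\oplus V_2$. It is cut out by the idempotent $\overline{F}(e_1)\otimes\overline{F}(e_2)$, which is \emph{not} of the form $(\overline{F}\otimes\overline{F})(f)$ for any $f\in\mathrm{End}(V_1\oplus V_2)$, so $\phi_{V_1\oplus V_2}$ has no reason to respect it. The genuine cross-effect summand here is $\mathrm{cr}_2(\overline{F}\otimes\overline{F})(V_1,V_2)$, which contains $\overline{F}(V_1)\otimes\overline{F}(V_2)$ but also mixed pieces such as $\mathrm{cr}_2\overline{F}(V_1,V_2)\otimes\overline{F}(V_1)$; it is not ``$(d,d)$-bigraded,'' and $\mathrm{cr}_2(G_{<2d})$ is typically nonzero. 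To make an argument along these lines precise one would want to pass to bifunctors via the sum--diagonal adjunction (as in \cite{FLS94}) and invoke (1) in each variable together with a K\"unneth-type decomposition of $G\circ\oplus$; as written, the step ``$\phi$ must vanish on this summand'' is unjustified. The paper instead quotes the compatibility of the polynomial filtration with $\otimes$ (so that $\overline{F}\otimes\overline{F}$ has no quotient of degree $<2d$), which is precisely the fact your argument is trying to reprove.

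For part (3), your reduction by d\'evissage to $G_{\leq d}=S$ simple agrees with the paper, and your analysis that a non-split extension $0\to S\to E\to\overline{F}\otimes\overline{F}\to 0$ would force $q_{2d-1}E=0$ with $S$ in the socle is correct. But the proof is not finished: you explicitly stop at ``To exclude this, I would\ldots\ aiming to contradict\ldots'' without supplying the contradiction. The paper's route is different and more concrete: embed $S$ into $T^n$ for $n=\deg S\leq d$, reduce via (1)/(2) to showing $\Ext^1_\calF(\overline{F}\otimes\overline{F},T^n)=0$, and then apply the standard tensor-product/sum--diagonal methods of \cite{FLS94}, which give a K\"unneth-type splitting of this $\Ext^1$ into terms each killed by part (1). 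You would need to supply something of this kind (or another genuine argument) before the lemma is proved.

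Summary: part (1) is sound, part (2) has a concrete gap (the proposed summand is not preserved by natural transformations), and part (3) is acknowledged by you as incomplete; the key step of the lemma is therefore missing.
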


\begin{proof}
The first statement is a consequence of the definition of $\overline{F}$.

The second is similar and follows from the compatibility of the polynomial
filtration of $\calF$ with tensor products, which implies that $\overline{F}
\otimes \overline{F}$ has no quotient of polynomial degree $<2d$. The second 
equality follows from 
 the fact that the functor $\Lambda ^2(\overline{F})$ is a quotient of
$\overline{F} \otimes \overline{F}$.

The result for $\Ext^1_\calF$ is proved as follows. 
Using {\em d\'evissage} it is straightforward to reduced to the case where
$G_{\leq d}$ is simple. 
Since a simple functor of polynomial degree $n$ embeds in the $n$th tensor
functor $T^n : V \mapsto V^{\otimes n}$, which 
is finite and has polynomial degree $n$, using the previous statement for
$\Hom_\calF$, it suffices to show that  $\Ext^1_{\calF} (\overline{F} \otimes
\overline{F}, T^n) =0$ for $n \leq d$. This follows by the standard methods
introduced in \cite{FLS94}, exploiting the tensor
product on the left hand side.
\end{proof}

The proof of Theorem \ref{infini} uses these results in conjunction with the 
Eilenberg-Moore spectral sequence. 
For relevant details (and further references) on the Eilenberg-Moore spectral 
sequence computing 
$H^* \Omega X$ from $H^* X$, see \cite[Section 8.7]{S94}. Note  
that the hypothesis 
that $X$ is simply-connected ensures strong convergence of the spectral 
sequence.

Recall that the $(-2)$-layer of the Eilenberg-Moore filtration 
$F_{\infty}^{-2,*} $ on 
$H^*\Omega X$  is an extension in unstable modules between the  column  
$E^{-1,*}_\infty$ desuspended once and the  column  $E^{-2,*}_\infty$  
desuspended twice:
$$
 0 \ra 
 \Sigma^{-1} E_{\infty}^{-1,*}
 \ra
 F_{\infty}^{-2,*}
 \ra 
\Sigma^{-2}E_{\infty}^{-2,*} \ra 0.
$$

The term $E_{\infty}^{-1,*} $ is a quotient of $Q H^*X \cong E_{2}^{-1,*}$ by a 
submodule in $\Nil_2$, by \cite[Theorem 8.7.1]{S94}. Similarly,  
$E_{\infty}^{-2,*} $ is a quotient of $\Tor^{-2}_{H^*X}(\F2,\F2)$ (which
belongs to $\Nil_2$ by \cite[Theorem 6.1]{S94}),  
by a submodule in ${\Nil}_3$  (see \cite[Proposition 8.7.7]{S94}).  
(In the reference $\overline{\Nil}_3$ is used, where $\overline{\Nil}_3$ is 
generated by $\Nil_3$ and $\U_0$ 
\cite[Section 6.2]{S94}, however here the situation reduces to ${\Nil}_3$.)

From Lemma \ref{lem:Omega_nil}, it follows that  applying the exact functor $f 
: 
\U \rightarrow \calF$ yields the short 
exact sequence:
\begin{eqnarray}
\label{eqn:f_ses}
0 
\rightarrow 
f R_1 Q  H^* X 
\rightarrow 
f  F_{\infty}^{-2,*}
\rightarrow 
f R_2 \Tor^{-2}_{H^*X}(\F2,\F2)
\rightarrow 
0.
\end{eqnarray}
Moreover, the surjection $\tilde H^*X \twoheadrightarrow Q H^*X$ induces an 
isomorphism   $ fR_1 \tilde H^*X \cong f R_1 Q H^*X$. This
allows  arguments to be carried out in the category of functors $\calF$.

\begin{nota}
 Write 
 \begin{eqnarray*}
   F_1 &:=& fR_1 \tilde H^*X \cong f(\Sigma^{-1}E^{-1,*}_\infty)
    \\ 
 F_2&:= &f(F^{-2,*}_\infty),
 \end{eqnarray*}
so that $F_2/ F_1 \cong f R_2 
\Tor^{-2}_{H^*X}(\F2,\F2)$ by (\ref{eqn:f_ses}).
 \end{nota}

 \begin{rmq}
  With this notation, the short exact sequence (\ref{eqn:f_ses}) represents a 
class  $[F_2] \in \Ext_\calF^1 (F_2/F_1, F_1)$.
 \end{rmq}

The compatibility of the cup product with the Eilenberg-Moore spectral sequence 
gives a morphism $S^2 (F_1) \rightarrow F_2$ and hence a morphism of
short exact sequences 
$$
 \xymatrix{
 0 
 \ar[r]&
 F_1 
\ar@{=}[d]
\ar[r]
 &
 S^2 (F_1) 
\ar[r]
\ar[d]
&
\Lambda^2 (F_1) 
\ar[r]
\ar[d]
&
0
\\
0
 \ar[r]&
 F_1 
 \ar[r]
 &
\ar[r]
F_2 
\ar[r]
&
F_2/F_1 
\ar[r]
& 0
 }
$$
so that the right hand square is a pull-back.

By hypothesis, the functor $F_1$ is of degree $d$;  the inclusion
$\overline{F_1} 
\subset F_1$ induces inclusions $S^2 
(\overline{F_1})  \subset S^2 ( F_1)$ and  $\Lambda^2 (\overline{F_1})  \subset 
\Lambda ^2 ( F_1)$ which fit into the following diagram of morphisms of short 
exact sequences:
\begin{eqnarray}
 \label{eqn:ses_diagram}
 \xymatrix{ 
 0
\ar[r]&
\overline{ F_1} 
\ar@{^(->}[d]
\ar[r]
 &
S^2 ( \overline{F_1} )
\ar[r]
\ar[d]
&
\Lambda^2 (\overline{F_1}) 
\ar[r]
\ar@{=}[d]
&
0
\\
0
 \ar[r]&
 F_1 
\ar@{=}[d]
\ar[r]
 &
 {E}
\ar[r]
\ar[d]
&
\Lambda^2 (\overline{F_1}) 
\ar[r]
\ar[d]
&
0
\\
0
 \ar[r]&
 F_1 
\ar@{=}[d]
\ar[r]
 &
 S^2 (F_1) 
\ar[r]
\ar[d]
&
\Lambda^2 (F_1) 
\ar[r]
\ar[d]
&
0
\\
0
 \ar[r]&
 F_1 
 \ar[r]
 &
\ar[r]
F_2 
\ar[r]
&
F_2/F_1 
\ar[r]
& 0
 }
\end{eqnarray}
in which  the second row can be viewed either as the pushout of the top row or 
the pullback of the second.  Moreover, the first and  third rows are not split, 
by Lemma \ref{lem:non-split}, hence the bottom row is not 
split. 

\begin{lem}
\label{lem:non-split_bar}
 If $F_1$ is non-constant the short exact sequence 
 $$
  0
  \rightarrow 
  F_1 
  \rightarrow 
  {E}
  \rightarrow 
  \Lambda^2 (\overline{F_1}) 
  \rightarrow 
  0
 $$
does not split. 
\end{lem}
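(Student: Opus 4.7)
The plan is to proceed by contradiction. Suppose a splitting $s \colon \Lambda^2(\overline{F_1}) \to E$ of the projection $\pi \colon E \twoheadrightarrow \Lambda^2(\overline{F_1})$ exists; I will extract from $s$ a retraction of the inclusion $\overline{F_1} \hookrightarrow S^2(\overline{F_1})$, contradicting the non-splitting of the top row of~(\ref{eqn:ses_diagram}) supplied by Lemma~\ref{lem:non-split}.

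First, I would exploit the pushout map $\phi \colon S^2(\overline{F_1}) \to E$ coming from the first two rows of~(\ref{eqn:ses_diagram}) to form
$$
\psi := \phi - s \circ \pi \circ \phi \colon S^2(\overline{F_1}) \to E.
$$
Since $\pi \circ s = \id$ and $\pi \circ \phi$ coincides with the canonical surjection $S^2(\overline{F_1}) \twoheadrightarrow \Lambda^2(\overline{F_1})$ (by commutativity of the right square), one checks $\pi \circ \psi = 0$, so $\psi$ takes values in $F_1 = \ker \pi$. Furthermore, because $\pi \phi$ kills $\overline{F_1}$ and $\phi|_{\overline{F_1}}$ factors as the composite $\overline{F_1} \hookrightarrow F_1 \hookrightarrow E$ (by commutativity of the left square), the resulting map $\psi \colon S^2(\overline{F_1}) \to F_1$ restricts to the canonical inclusion on $\overline{F_1}$.

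The key step is then to show that $\psi$ actually lands in the subfunctor $\overline{F_1} \subset F_1$. Post-composing with the quotient $\rho \colon F_1 \twoheadrightarrow q_{d-1} F_1$, the map $\rho \circ \psi$ vanishes on $\overline{F_1}$ by the very definition $\overline{F_1} = \ker \rho$, so it descends to a morphism $\Lambda^2(\overline{F_1}) = S^2(\overline{F_1}) / \overline{F_1} \to q_{d-1} F_1$. This descended morphism must be zero by Lemma~\ref{nonsplit}(2), since $\Lambda^2(\overline{F_1})$ is a quotient of $\overline{F_1} \otimes \overline{F_1}$, which admits no non-zero quotient of polynomial degree less than $2d$, whereas $q_{d-1} F_1$ has degree less than $d < 2d$. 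Hence $\rho \circ \psi = 0$, so $\psi$ lifts to some $\psi' \colon S^2(\overline{F_1}) \to \overline{F_1}$.

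Since $\psi'$ restricts to the identity on $\overline{F_1}$, it is a retraction of $\overline{F_1} \hookrightarrow S^2(\overline{F_1})$ and therefore splits the top row of~(\ref{eqn:ses_diagram}). When $F_1$ is non-constant one has $d \geq 1$, so $\overline{F_1}$ is itself a non-constant finite functor (of polynomial degree exactly $d$), and Lemma~\ref{lem:non-split} states that the top row does not split, the desired contradiction. The main point requiring care is locating the right vanishing input within Lemma~\ref{nonsplit} to squeeze $\psi$ inside $\overline{F_1}$; the rest is a straightforward diagram chase in~(\ref{eqn:ses_diagram}).
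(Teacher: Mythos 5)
Your argument is correct and is an explicit unwinding of the paper's own proof: where the paper applies the long exact sequence for $\Ext_\calF^*$ to $0 \to \overline{F_1} \to F_1 \to q_{d-1}F_1 \to 0$ and uses the vanishing $\Hom_\calF(\Lambda^2(\overline{F_1}), q_{d-1}F_1)=0$ to conclude that the pushout map $\Ext^1_\calF(\Lambda^2(\overline{F_1}),\overline{F_1}) \to \Ext^1_\calF(\Lambda^2(\overline{F_1}),F_1)$ is injective, you carry out the equivalent diagram chase, feeding a hypothetical splitting of the middle row back into a splitting of the top row. Both versions rest on exactly the same two inputs, Lemma~\ref{lem:non-split} and Lemma~\ref{nonsplit}(2), so this is the same approach.
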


\begin{proof}
Consider the long exact sequence for $\Ext_\calF^*$ induced by the defining 
short exact 
sequence  $\overline{F_1} \rightarrow F_1 \rightarrow q_{d-1} F_1$, which gives 
the exact 
sequence:
 $$
  \Hom (  \Lambda^2 (\overline{F_1}) , q_{d-1} F_1) 
  \rightarrow 
  \Ext_\calF^1 (\Lambda^2 (\overline{F_1}) ,\overline{F_1})
  \rightarrow 
   \Ext_\calF^1 (\Lambda^2 (\overline{F_1}) ,F_1)
 $$
in which the first term is zero, by Lemma \ref{nonsplit}. Thus the second 
morphism is injective.

The top row in diagram (\ref{eqn:ses_diagram}) represents a  non-trivial class 
in  $\Ext^1_\calF
(\Lambda^2 (\overline{F_1}) ,\overline{F_1})$, by Lemma  
\ref{lem:non-split}, hence pushes out to a non-split short exact sequence
 represented by a non-zero class in $  \Ext_\calF^1 (\Lambda^2 (\overline{F_1}) 
,F_1)$.
\end{proof}

\begin{prop}
\label{prop:non-factorization}
 The morphism $\Lambda^2 \overline{F_1} \rightarrow F_2/ F_1$ does not factor 
over $\overline{F_1} \otimes \overline{F_1} $:
 $$
  \xymatrix{
  \Lambda^2 \overline{F_1} 
  \ar[r]
  \ar[d]
  &
  \overline{F_1} \otimes \overline{F_1}
  \ar@{.>}[ld] ^{\not \exists}
  \\
  F_2/F_1. 
  }
 $$
\end{prop}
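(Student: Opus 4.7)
The plan is to argue by contradiction: I would suppose that the map $\Lambda^2 \overline{F_1} \to F_2/F_1$ factors as $\Lambda^2 \overline{F_1} \xrightarrow{\alpha} \overline{F_1}\otimes\overline{F_1} \xrightarrow{\beta} F_2/F_1$, and then derive that the extension $E$ of diagram (\ref{eqn:ses_diagram}) must split, contradicting Lemma \ref{lem:non-split_bar}. The whole argument is formal once the two key inputs are in place: the $\Ext^1$-vanishing of Lemma \ref{nonsplit}(3) and the non-splitting of Lemma \ref{lem:non-split_bar}.

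First, I would pull back the bottom short exact sequence of diagram (\ref{eqn:ses_diagram}),
$$0 \to F_1 \to F_2 \to F_2/F_1 \to 0,$$
along $\beta\colon \overline{F_1}\otimes\overline{F_1} \to F_2/F_1$, producing an extension
$$0 \to F_1 \to F_2' \to \overline{F_1}\otimes\overline{F_1} \to 0$$
whose class lies in $\Ext^1_{\calF}(\overline{F_1}\otimes\overline{F_1},\, F_1)$. Since $F_1 = fR_1\tilde H^*X$ has polynomial degree $d$, Lemma \ref{nonsplit}(3) applied with $F=F_1$ and $G_{\leq d}=F_1$ gives that this $\Ext^1$-group vanishes, so the extension $F_2'$ splits.

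Next, I would identify $E$ as an iterated pullback. By construction, the second row of diagram (\ref{eqn:ses_diagram}) is the pullback of the third row along $\Lambda^2(\overline{F_1}) \hookrightarrow \Lambda^2(F_1)$; since the third row is in turn the pullback of the bottom row along $\Lambda^2(F_1) \to F_2/F_1$ (the right-hand square being a pullback), the transitivity of pullbacks shows that $E$ is the pullback of the bottom row along the composite vertical arrow $\Lambda^2(\overline{F_1}) \to \Lambda^2(F_1) \to F_2/F_1$, which by hypothesis equals $\beta\circ\alpha$. Composing pullbacks the other way, $E$ is also the pullback of $F_2'$ along $\alpha$; but a pullback of a split extension is split, so $E$ would split, contradicting Lemma \ref{lem:non-split_bar}.

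The main obstacle is not in this proposition itself but upstream: the substantive work is entirely carried by Lemma \ref{nonsplit}(3) (where the numerical gap between $\deg(\overline{F_1}\otimes\overline{F_1}) \geq 2d$ and $\deg F_1 = d$ is exploited via the methods of \cite{FLS94}) and by Lemma \ref{lem:non-split_bar} (where the non-splitting of the Frobenius extension of Lemma \ref{lem:non-split} is pushed into $F_1$). Once those are granted, the only care required here is to check that the vertical arrow $\Lambda^2\overline{F_1}\to F_2/F_1$ in the proposition genuinely agrees with the composite appearing in the second row of (\ref{eqn:ses_diagram}), so that the iterated-pullback identification is legitimate; this is immediate from the construction of the diagram.
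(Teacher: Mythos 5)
Your argument is correct and is essentially the paper's proof spelled out in more detail: both identify the non-split extension $E$ of Lemma \ref{lem:non-split_bar} as the pullback of $[F_2]\in\Ext^1_\calF(F_2/F_1,F_1)$ along $\Lambda^2\overline{F_1}\to F_2/F_1$, and observe that a factorization through $\overline{F_1}\otimes\overline{F_1}$ would force $[E]$ to lie in the image of $\Ext^1_\calF(\overline{F_1}\otimes\overline{F_1},F_1)$, which vanishes by Lemma \ref{nonsplit}. The only difference is that you make the iterated-pullback bookkeeping explicit, whereas the paper compresses it into a single sentence.
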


\begin{proof}
The non-trivial extension of Lemma  \ref{lem:non-split_bar} is the image of 
the class $[F_2] \in \Ext_{\calF}^1 (F_2/F_1, F_1)$ under the morphism 
induced by $\Lambda ^2 (\bar F_1) \rightarrow F_2/ F_1$. Hence there can be no 
factorization across the group  $\Ext_\calF^1 (\overline{F_1} \otimes 
\overline{F_1}, F_1)$, which is trivial  by Lemma \ref{nonsplit}.
\end{proof}

\begin{prop} 
\label{prop:tor}
Assume that $\tilde H^*X$ is nilpotent and of finite type. 

If $deg(f 
R_1H^*X) 
= d \in \N$ and  $deg(f  R_2(H^*X)) <2d$, then
\begin{enumerate}
 \item 
 there is an inclusion $\overline{F_1} \otimes \overline{F_1} \hookrightarrow f 
R_2 \Tor^{-2}_{H^* X} (\F2, \F2) \cong F_2/F_1$ with cokernel of degree $<2d$;
 \item 
 the morphism $\Lambda^2 (\overline{F_1}) \rightarrow F_2/F_1$ factors across 
$\overline{F_1} \otimes \overline{F_1} \hookrightarrow f 
R_2 \Tor^{-2}_{H^* X} (\F2, \F2)$.
 \end{enumerate}
\end{prop}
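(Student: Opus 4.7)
The strategy is to identify $F_2/F_1$ with a subfunctor of $F_1 \otimes F_1$ via the bar-complex description of $\Tor^{-2}_{H^*X}(\F2,\F2)$, and then to exploit the hypothesis via the $\Hom$-vanishing of Lemma \ref{nonsplit}(2). The reduced bar complex presents $\Tor^{-2}_{H^*X}(\F2,\F2) = \ker\mu / \Im d_3$, where $\mu \colon \tilde H^*X \otimes \tilde H^*X \to \tilde H^*X$ is the restricted cup product and $d_3 \colon \tilde H^*X^{\otimes 3} \to \tilde H^*X \otimes \tilde H^*X$ is the bar differential. By Proposition \ref{nilp}, $\tilde H^*X^{\otimes 3} \in \Nil_3$, so $\Im d_3 \subset \ker\mu$ lies in $\Nil_3$, and Lemma \ref{lem:Omega_nil} yields an isomorphism $fR_2 \ker\mu \cong fR_2 \Tor^{-2} = F_2/F_1$. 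Moreover, since $\tilde H^*X \otimes \tilde H^*X \in \Nil_2$ and $f$ commutes with tensor products, one computes $fR_2(\tilde H^*X \otimes \tilde H^*X) \cong F_1 \otimes F_1$; applying the left-exact functor $fR_2$ to the short exact sequence $0 \to \ker\mu \to \tilde H^*X \otimes \tilde H^*X \to (\bar K)^2 \to 0$ then produces an embedding $F_2/F_1 \hookrightarrow F_1 \otimes F_1$ whose cokernel is contained in $fR_2 (\bar K)^2 \hookrightarrow fR_2 \tilde H^*X$ (the final inclusion again by left-exactness applied to $(\bar K)^2 \hookrightarrow \tilde H^*X$).

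For the first assertion, the hypothesis forces $fR_2 \tilde H^*X$, hence $fR_2(\bar K)^2$, to have polynomial degree $<2d$; the composite $\overline{F_1} \otimes \overline{F_1} \hookrightarrow F_1 \otimes F_1 \to fR_2(\bar K)^2$ therefore vanishes by Lemma \ref{nonsplit}(2), placing $\overline{F_1} \otimes \overline{F_1}$ inside $F_2/F_1$. For the cokernel statement, tensoring $0 \to \overline{F_1} \to F_1 \to q_{d-1}F_1 \to 0$ successively with $F_1$ and with $\overline{F_1}$ endows $F_1 \otimes F_1 / (\overline{F_1} \otimes \overline{F_1})$ with a two-step filtration whose successive quotients $\overline{F_1} \otimes q_{d-1}F_1$ and $q_{d-1}F_1 \otimes F_1$ have degree $<2d$; the cokernel of $\overline{F_1} \otimes \overline{F_1} \hookrightarrow F_2/F_1$, a subquotient of this, is therefore also of degree $<2d$.

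For the factorization, I would identify the map $\Lambda^2 F_1 \to F_2/F_1$ induced from the cup-product morphism $S^2 F_1 \to F_2$ with the shuffle product $\Lambda^2 \Tor^{-1} \to \Tor^{-2}$, represented at the chain level by $a \wedge b \mapsto a \otimes b + b \otimes a \in \ker\mu$. Under the identification $F_2/F_1 \hookrightarrow F_1 \otimes F_1$ constructed above, this map then becomes the canonical inclusion $\Lambda^2 F_1 \hookrightarrow F_1 \otimes F_1$, which is well-defined and injective in characteristic $2$. Its restriction to $\Lambda^2 \overline{F_1}$ manifestly takes values in $\overline{F_1} \otimes \overline{F_1}$, so factors through $\overline{F_1} \otimes \overline{F_1} \hookrightarrow F_2/F_1$. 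The principal technical obstacle is the verification of this last identification: namely, that the multiplicative map arising from the Eilenberg-Moore spectral sequence coincides, after passage to $fR_2$, with the shuffle map on the bar complex, and that the resulting embedding into $F_1 \otimes F_1$ agrees with the one produced by the bar-complex/left-exactness argument. Once this chain-level compatibility is in place the rest of the proof is a combination of left-exactness of $fR_2$, standard behaviour of the nilpotent filtration on tensor products, and Lemma \ref{nonsplit}(2).
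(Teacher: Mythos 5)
Your approach to part~(1) is genuinely different from the paper's and arguably cleaner. The paper lifts $\overline{F_1}$ to a finitely generated submodule $U\subset\tilde H^*X$ with trivial cup square (this is the point of Proposition~\ref{prop:restriction}), and then reads off the inclusion $\overline{F_1}\otimes\overline{F_1}\hookrightarrow fR_2\Tor^{-2}$ directly from the bar construction. You instead work entirely inside $\calF$: you apply the left-exact functor $fR_2$ to the two-sided bar-complex short exact sequence $0\to\ker\mu\to\tilde H^*X^{\otimes 2}\to(\bar K)^2\to 0$, identify $fR_2(\tilde H^*X^{\otimes 2})\cong F_1\otimes F_1$ and $fR_2\ker\mu\cong F_2/F_1$ via Lemma~\ref{lem:Omega_nil}, and use the $\Hom$-vanishing of Lemma~\ref{nonsplit}(2) to force $\overline{F_1}\otimes\overline{F_1}$ into the kernel. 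This is precisely the simplification foreshadowed in the paper's remark that "the appeal to Proposition \ref{prop:restriction} can be avoided so that the argument is carried out entirely within $\calF$." One caveat: your one-line justification "$f$ commutes with tensor products" is not quite what is needed for $fR_2(\tilde H^*X^{\otimes 2})\cong F_1\otimes F_1$; you need the observation that the surjection $\tilde H^*X^{\otimes 2}\twoheadrightarrow\Sigma^2(R_1\tilde H^*X)^{\otimes 2}$ has kernel in $\Nil_3$, and then apply Lemma~\ref{lem:Omega_nil}. Your degree bound on the cokernel (two-step filtration of $F_1\otimes F_1/\overline{F_1}\otimes\overline{F_1}$) is correct.

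Part~(2) is where you have a real gap, and you acknowledge it. Your plan is to identify the cup-product-induced map $\Lambda^2 F_1\to F_2/F_1$ with the shuffle product on the bar complex, push it through your embedding $F_2/F_1\hookrightarrow F_1\otimes F_1$, and observe that the canonical inclusion $\Lambda^2 F_1\hookrightarrow F_1\otimes F_1$ restricts into $\overline{F_1}\otimes\overline{F_1}$. The "principal technical obstacle" you flag --- that the multiplicative structure of the Eilenberg--Moore spectral sequence is compatible, after applying $fR_2$, with the shuffle map and with the left-exactness embedding --- is indeed unverified, and resolving it is real work. But it is also unnecessary: once you have part~(1), the factorization is an immediate consequence of a $\Hom$-vanishing. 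Since the cokernel of $\overline{F_1}\otimes\overline{F_1}\hookrightarrow F_2/F_1$ has polynomial degree $<2d$, and $\Hom_\calF(\Lambda^2(\overline{F_1}),G_{<2d})=0$ by Lemma~\ref{nonsplit}(2), the composite $\Lambda^2(\overline{F_1})\to F_2/F_1\to (F_2/F_1)/(\overline{F_1}\otimes\overline{F_1})$ vanishes, and the map factors as required. This is exactly the paper's argument, it uses only what you have already established, and it sidesteps the chain-level question entirely. You should replace your shuffle-map plan with this degree argument.
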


\begin{proof}
 The cup product of $H^*X$ induces a morphism $\tilde H^*X \otimes \tilde H^* X 
\rightarrow \nlp_2 H^* X$, since $\tilde H^* X \in \Nil_1$, hence in $\calF$
 \[
F_1 \otimes F_1 =   f R_1 H^* X \otimes fR_1 H^* X \rightarrow f R_2 H^* X.
 \]
Restricting to $\overline{F_1}\otimes \overline{F_1} \subset F_1 \otimes F_1$ 
this morphism is trivial, by Lemma \ref{nonsplit}, since $\deg f R_2 H^* X
<2d$, 
by hypothesis.

Lift $\overline{F_1}$ to a submodule $M$ of $\tilde H^* X$ (so that $fR_1 M$ 
corresponds to $\overline{F_1}$) and consider the restriction of the product.
By 
construction this gives:
\[
 M \otimes M \rightarrow \nlp_3 H^* X
\]
with $M \otimes M \in \Nil_2$. Moreover, it is easily checked that the 
finiteness hypothesis required to apply Proposition \ref{prop:restriction} is 
satisfied, hence there exists a finitely generated submodule $U \subset M 
\subset \tilde
H^* 
X$ such that $f R_1 U = f R_1 M = \overline{F_1}$ and the cup product restricts 
to a trivial map $U \otimes U \rightarrow \tilde H^* X$. (This is a slight 
extension of Proposition \ref{prop:restriction} using the 
fact that the choices in the proof can be taken to be compatible with the 
tensor 
product $M \otimes M$.)

The  calculation of $\Tor^{-2}_{H^* X} (\F2, \F2)$ using the bar 
construction then implies that there is an inclusion $\overline{F_1} \otimes
\overline{F_1} \hookrightarrow f 
R_2 \Tor^{-2}_{H^* X} (\F2, \F2)$. Moreover, by the definition of
$\overline{F_1} \subset F_1$, it is clear
that the cokernel has degree $<2d$. 

Finally, again by Lemma \ref{nonsplit}, there is no non-trivial map from 
$\Lambda^2
(\overline{F_1})$ to a functor 
of degree $<2d$, which gives the factorization statement. 
\end{proof}

\begin{rmq}
 The appeal to Proposition \ref{prop:restriction} can be avoided 
so that the argument is carried out entirely within $\calF$.
\end{rmq}

\begin{proof}[Proof of Theorem \ref{infini}]
Suppose that $deg(f  R_2H^*X) <2d $, then  Proposition \ref{prop:tor} 
provides a factorization
 \[
  \Lambda^2 (\overline{F_1}) \rightarrow \overline{F_1}\otimes \overline{F_1} 
\rightarrow F_2/ F_1. 
 \]
 This contradicts Proposition \ref{prop:non-factorization}.
\end{proof}

\begin{rmq}
\label{rem:strengthen}
 The result proved is slightly stronger, without supposing  $w_X (2) 
<2d$. Namely the argument shows that the composite:
\[
 \overline{F_1} \otimes \overline{F_1} \subset F_1 \otimes F_1 \rightarrow f
R_2 
H^*X,
\]
where the second morphism is induced by the cup product of $H^* X$, is 
necessarily non-trivial. 
\end{rmq}


\providecommand{\bysame}{\leavevmode\hbox to3em{\hrulefill}\thinspace}
\providecommand{\MR}{\relax\ifhmode\unskip\space\fi MR }
\providecommand{\MRhref}[2]{%
  \href{http://www.ams.org/mathscinet-getitem?mr=#1}{#2}
}
\providecommand{\href}[2]{#2}

\end{document}